\newtheorem{theorem}{Theorem}
\newtheorem{proposition}{Proposition}
\newtheorem{corollary}{Corollary}
\newtheorem{lemma}{Lemma}
\newcommand{\R}{{\mathbb R}}
\newcommand{\Z}{{\mathbb Z}}
\newcommand{\C}{{\mathbb C}}
\newcommand{\set}[2]{ \left\{ #1 \ \left| \ #2 \right. \right\} }
\newcommand{\SL}{{\mathrm{SL}}}
\newcommand{\GL}{{\mathrm{GL}}}
\newcommand{\ms}{\cdot {\boldsymbol \partial}}
\newcommand{\mso}{{\boldsymbol \partial}}
\newcommand{\MS}{{\mathbb M}}
\title{Geometric averaging operators and nonconcentration inequalities}
\author{Philip T.~Gressman\footnote{This work was partially supported by NSF grants DMS-1361697 and DMS-1764143.}}
\date{\today}
\begin{document}
\maketitle
\begin{abstract}
This paper is devoted to a systematic study of certain geometric integral inequalities which arise in continuum combinatorial approaches to $L^p$-improving inequalities for Radon-like transforms over polynomial submanifolds of intermediate dimension.  The desired inequalities relate to and extend a number of important results in geometric measure theory.
\end{abstract}

\tableofcontents

\section{Introduction}
\subsection{Main results}
Suppose that $\gamma(t,x)$ is a polynomial map from $\R^{n} \times \R^{N_2}$ into $\R^{N_1}$ with  $r := N_1 -n > 0$ and that $\widetilde \Omega$ is some Borel measurable subset of $\R^{n} \times \R^{N_2}$. To this $\gamma$ and $\widetilde \Omega$, one may associate the Radon-like operator
\begin{equation}
T f(x) := \int_{\R^n} f(\gamma(t,x)) \chi_{\widetilde \Omega} (t,x) dt, \label{theop}
\end{equation}
which may be informally regarded as averaging functions $f$ on $\R^{N_1}$ over the family of sets $\{ \Sigma_x\}_{x \in \R^{N_2}}$ given by 
\[ \Sigma_x := \set{ \gamma(t,x) \in \R^{N_1}}{ t \in \R^{n}, \ (t,x) \in \widetilde \Omega}. \]
 The main result of this paper regarding the operator \eqref{theop} is the following:
\begin{theorem}
Suppose $N_2 = rk$ for some positive integer $k$.  Let $\omega$ be the $r$-form \label{radon}
\begin{equation}
\begin{split} \omega & (t,x) :=  \! \! \! \label{normalform} \\ & \sum_{1 \leq i_1 < \cdots < i_r \leq N_2} \! \! \det \left[ \! \! \begin{array}{cccc} \frac{\partial \gamma}{\partial x_{i_1}}(t,x) & \! \! \cdots & \! \! \! \frac{\partial \gamma}{\partial x_{i_r}}(t,x) & \! \! \frac{\partial \gamma}{\partial t}(t,x)  \end{array} \! \! \right] d x_{i_1} \wedge \cdots \wedge dx_{i_r},
\end{split} 
\end{equation}
where each ${\partial \gamma}/{\partial x_{i_j}}$ is an $N_1 \times 1$ column matrix of partial derivatives, ${\partial \gamma}/{\partial t}$ is the $N_1 \times n$ Jacobian matrix of $\gamma$ with respect to $t$, and the determinant is that of the $N_1 \times N_1$ square matrix formed by concatenation. For each $x \in \R^{N_2}$, let\footnote{Note that the ratio of forms in the definition of $\Phi_x$ is a well-defined real number because both numerator and denominator belong to the same one-dimensional vector space of $N_2$-forms on $\R^{N_2}$.}
\begin{equation} \Phi_x(t_1,\ldots,t_k) := \frac{\omega(t_1,x) \wedge \cdots \wedge \omega(t_k,x)}{dx_1 \wedge \cdots \wedge dx_{N_2}}. \label{thephi} \end{equation}
Fix any real $s,\delta > 0$ and suppose that  $\widetilde \Omega \subset \R^{n} \times \R^{N_2}$ is a Borel set such that
\begin{equation} \int_{E^k}  |\Phi_x(t_1,\ldots,t_k)| dt_1 \cdots d t_k \geq \delta |E|^{k+s} \label{mainhyp} \end{equation}
for every point $x \in \R^{N_2}$ and every Borel $E \subset \R^{n}$ such that $E \times \{x\} \subset \widetilde \Omega$, where $|E|$ denotes the Lebesgue measure of $E$. Then the Radon-like operator \eqref{theop}
satisfies the inequality
\begin{equation} || T \chi_F ||_{L^{k+s}(\R^{N_2})} \lesssim  \delta^{-\frac{1}{k+s}} |F|^\frac{k}{k+s} \label{mainconc} \end{equation}
for all Borel sets $F \subset \R^{N_1}$, with the notation ``$\lesssim$'' indicating the presence of an implicit multiplicative factor. In this case, the factor depends only on $(n,N_1,N_2,s, \deg \gamma)$.\end{theorem}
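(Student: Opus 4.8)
The plan is to run the ``method of refinements'' (inflation argument): convert the $L^{k+s}$ bound into a multilinear integral of $|\Phi_x|$ over the fibers of $T$, recognize that integral as the integral of a Jacobian determinant of an ``inflated'' polynomial map, and finish with the area formula plus a B\'ezout-type bound on the number of real preimages. For Step 1, fix a Borel set $F \subset \R^{N_1}$; we may assume $|F| < \infty$. For each $x$ put $E_x := \set{t \in \R^n}{\gamma(t,x) \in F \text{ and } (t,x) \in \widetilde \Omega}$, a Borel set with $E_x \times \{x\} \subset \widetilde \Omega$ and with $T\chi_F(x) = |E_x|$. Applying \eqref{mainhyp} with $E = E_x$ gives $|E_x|^{k+s} \leq \delta^{-1} \int_{E_x^k} |\Phi_x(t_1,\dots,t_k)| \, dt_1 \cdots dt_k$, so integrating in $x$,
\begin{equation*}
\| T\chi_F \|_{L^{k+s}(\R^{N_2})}^{k+s} = \int_{\R^{N_2}} |E_x|^{k+s} \, dx \leq \delta^{-1} \int_{\R^{N_2}} \int_{E_x^k} |\Phi_x(t_1,\dots,t_k)| \, dt_1\cdots dt_k \, dx ,
\end{equation*}
and it suffices to bound the right-hand side by $\delta^{-1} C |F|^k$ with $C$ depending only on $(n,N_1,N_2,\deg\gamma)$.

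The heart of the matter is Step 2, an algebraic identity. Since $N_2 = rk$ and $r + n = N_1$, the polynomial map $\Gamma\colon \R^{N_2} \times (\R^n)^k \to (\R^{N_1})^k$, $\Gamma(x,t_1,\dots,t_k) := (\gamma(t_1,x),\dots,\gamma(t_k,x))$, has domain and target both of dimension $kN_1$, i.e.\ it is equidimensional. The claim is the pointwise identity $\left| \det D\Gamma(x,t_1,\dots,t_k) \right| = \left| \Phi_x(t_1,\dots,t_k) \right|$, which I would prove by a direct multilinear-algebra computation: writing $D\Gamma$ in block form, with $j$-th row block $[\, D_x\gamma(t_j,x) \mid 0 \mid \cdots \mid D_t\gamma(t_j,x) \mid \cdots \mid 0 \,]$ (the $t$-Jacobian occupying the $j$-th $t$-slot), one performs a generalized Laplace expansion first along the $kn$ columns coming from $t_1,\dots,t_k$ --- which forces exactly $r$ rows of each row block to be paired with the $x$-columns --- and then along the row blocks, obtaining a signed sum over ordered partitions of $\{1,\dots,N_2\}$ into $r$-element blocks. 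This matches the expansion of $\omega(t_1,x)\wedge\cdots\wedge\omega(t_k,x)$ after each $N_1\times N_1$ determinant in \eqref{normalform} is itself Laplace-expanded along its last $n$ columns; the placement of $D_t\gamma$ as the final block in \eqref{normalform} is precisely what makes the two signed sums agree term by term up to one overall sign, and only absolute values are needed. Granting this and writing $A := \set{(x,t_1,\dots,t_k)}{t_j \in E_x \text{ for all } j}$, a Borel set, the right-hand side of Step 1 equals $\delta^{-1}\int_A |\det D\Gamma|$.

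For Step 3, if $\det D\Gamma \equiv 0$ then $\Phi_x \equiv 0$, so \eqref{mainhyp} forces $|E_x| = 0$ for every $x$, hence $T\chi_F \equiv 0$ and the conclusion is trivial. Otherwise $\det D\Gamma$ is a nonzero polynomial, so $\Gamma$ is dominant onto a full-dimensional image; complexifying and invoking B\'ezout's theorem, there is a set $Z \subset \R^{kN_1}$ of Lebesgue measure zero and a constant $C = C(n,N_1,N_2,\deg\gamma)$ with $\#\Gamma^{-1}(y) \leq C$ for all $y \notin Z$. Since $\Gamma(A) \subset F^k$, the area formula (applied to bounded pieces of $A$ and passed to the limit) gives
\begin{equation*}
\int_A |\det D\Gamma| = \int_{F^k} \#\bigl(\Gamma^{-1}(y) \cap A\bigr)\, dy \leq C\, |F^k| = C\, |F|^k .
\end{equation*}
Combining with Steps 1 and 2 yields $\|T\chi_F\|_{L^{k+s}}^{k+s} \leq C\delta^{-1}|F|^k$, and taking $(k+s)$-th roots gives \eqref{mainconc} with an implicit constant depending only on $(n,N_1,N_2,s,\deg\gamma)$.

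I expect the main obstacle to be exactly the Jacobian identity in Step 2 --- checking that the exterior-algebraic object $\Phi_x$ really is, up to sign, the Jacobian determinant of the inflated map $\Gamma$ --- since everything else is bookkeeping together with two standard tools, the area formula and a B\'ezout-type bound on the number of real solutions of a polynomial system.
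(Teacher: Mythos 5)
Your proposal is correct and follows essentially the same route as the paper: reduce to bounding $\int |\Phi_x| \prod_j \chi_F(\gamma(t_j,x))\chi_{\widetilde\Omega}\,dt_1\cdots dt_k\,dx$, establish the Jacobian identity $|\det D\Gamma| = |\Phi_x|$ for the inflated map $\Gamma(x,t_1,\ldots,t_k)=(\gamma(t_1,x),\ldots,\gamma(t_k,x))$ by a block multilinear-algebra computation, and conclude via the change-of-variables/area formula together with a B\'ezout bound on the number of isolated preimages. The paper proves the Jacobian identity by wedging one-forms block by block rather than by a generalized Laplace expansion, but these are the same computation in different notation.
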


The technical structure of the proof is built on the change of variables formula, similar to various earlier approaches \cites{gressman2013,gressman2015} in the spirit of combinatorial/continuum incidence methods developed by Christ \cite{christ1998}. Christ's technique, based on ideas of Bourgain \cites{bourgain1986,bourgain1991}, Wolff \cites{wolff1995,wolff1997}, Schlag \cite{schlag1997}, and others, has, since its development twenty years ago, had an impact on the subject of harmonic analysis which is difficult to overstate. It has influenced and inspired work of Bennett, Carbery, and Wright \cite{bcw2005}, Dendrinos, Laghi, and Wright \cite{dlw2009}, Erdo\u{g}an and R.~Oberlin \cite{eo2008}, Hickman \cite{hickman2016}, D.~Oberlin \cite{oberlin2000II}, Stovall \cites{stovall2011,stovall2014}, Tao and Wright \cite{tw2003}, and many others.

  When $r=1$, the operator \eqref{theop} integrates over hypersurfaces and the integral on the left-hand side of \eqref{mainhyp} reduces to a multilinear determinant functional \cite{gressman2010}. In this case it is known that for fixed $x$, the inequality \eqref{mainhyp} is satisfied if and only if the Lebesgue measure $dt$ on the submanifold $\Gamma_x \subset \R^{N_2}$ parametrized by $t \mapsto \omega(t,x)$ satisfies D.~Oberlin's affine curvature condition, meaning that  \begin{equation} \int \chi_{R \cap \widetilde \Omega} ( \omega(t,x)) dt \lesssim |R|^{\frac{1}{s}} \label{oberlin0} \end{equation}
  for all boxes $R$ with arbitrary orientations and eccentricities, with an implicit constant which is independent of $R$. The condition \eqref{oberlin0} is called affine because the implicit constant does not change when $\Gamma_x$ is acted on by an equiaffine\footnote{The prefix ``equi-'' specifies those affine transformations which preserve Lebesgue measure.} transformation and is regarded as a curvature condition because it necessarily fails when $\Gamma_x$ lies in any affine hyperplane. The question of whether \eqref{oberlin0} is satisfied for a given $\omega(t,x)$ is surprisingly difficult to solve and systematic approaches have only recently become available \cite{gressman2017}. When $r > 1$, the situation is even more difficult, as there are no previously-known analogues of the Oberlin affine curvature condition which apply to \eqref{mainhyp}.
  
To address the inherent difficulties of the case $r > 1$, this paper is devoted primarily to the general study of functionals  of the form 
\begin{align}
{\mathcal A}  (E) & :=   \int_{E^k}   |\Phi(x_1,\ldots,x_k)|  d \mu(x_1) \cdots d \mu(x_k) 
 \label{mainobj} 
 \end{align}
 and
 \begin{align}
{\mathcal S}  (E) & := \sup_{(x_1,\ldots,x_k) \in E^k}  |\Phi(x_1,\ldots,x_k)|
 \label{mainobj2}
\end{align}
where the sets $E$ range over all Borel subsets of some domain $\Omega \subset \R^{n}$ and the measure $\mu$ is a nonnegative Borel measure. Functionals of the forms \eqref{mainobj} and \eqref{mainobj2} will be called nonconcentration functionals since they quantify the extent to which product sets $E^k$ fail to lie in the zero set of $\Phi$.
Outside of the context of Theorem \ref{radon}, $\Phi : \Omega^k \rightarrow \R^m$ will be taken to be any polynomial in $(x_1,\ldots,x_k)$ which vanishes to order $q \geq 1$ on the diagonal $\Delta := \set{(x_1,\ldots,x_k) \in \Omega^k}{x_1 = \cdots = x_k}$, meaning that all partial derivatives  of order less than $q$ vanish identically on $\Delta$ and some partial derivative of order $q$ is nonzero at some point of $\Delta$. When $m > 1$, the absolute values $|\cdot|$ are to be understood as some fixed but otherwise arbitrary norm on $\R^m$. The general question to be answered is to determine when one has inequalities of the form
\begin{align}
 \mathcal A(E) & \geq c_{\mu,s} \left[ \mu(E) \right]^{k + s} \label{anorm}
 \end{align}
 and
 \begin{align}
\mathcal S(E) & \geq c_{\mu,s}'  \left[ \mu(E) \right]^{s} \label{snorm}
\end{align}
for all Borel sets $E \subset \Omega$, where $s > 0$ is a fixed real number and $c_{\mu,s}$ and $c_{\mu,s}'$ are nonnegative constants which do not depend on $E$. The cases $c_{\mu,s} = 0$, $c_{\mu,s}' = 0$, and $\mu = 0$ are uninteresting; to avoid these exceptions, a nonnegative Borel measure $\mu$ on $\Omega$ will be said to satisfy \eqref{anorm} or \eqref{snorm} nontrivially when $\mu$ is not the zero measure and the corresponding inequality holds with a strictly positive constant. 
Both \eqref{anorm} and \eqref{snorm} will be called nonconcentration inequalities.

The first significant result for nonconcentration inequalities establishes the fundamental equivalence of \eqref{anorm} and \eqref{snorm}:
\begin{theorem}
For any nonnegative Borel measure $\mu$ and any $s > 0$, $\mu$ satisfies \eqref{anorm} with positive constant if and only if $\mu$ satisfies \eqref{snorm} with positive constant. Moreover, if one defines $||\mathcal A||_{\mu,s}$ to be the supremum of all nonnegative $c_{\mu,s}$ such that \eqref{anorm} holds for all Borel sets $E \subset \Omega$ and likewise defines $||\mathcal S||_{\mu,s}$ to be the supremum of all $c_{\mu,s}'$ satisfying \eqref{snorm} for all Borel $E \subset \Omega$,
then \label{aseq}
\begin{equation} || \mathcal S||_{\mu,s} \geq || \mathcal A||_{\mu,s} \gtrsim || \mathcal S||_{\mu,s} \label{aseqeq}, \end{equation}
where the implicit constant depends only on on $(n,k,s, \deg \Phi)$.
\end{theorem}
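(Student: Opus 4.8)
The plan is to prove the two inequalities in \eqref{aseqeq} separately. The first bound $\|\mathcal S\|_{\mu,s} \geq \|\mathcal A\|_{\mu,s}$ is the easy direction: for any Borel $E$ we have the trivial pointwise estimate $\mathcal A(E) \leq [\mu(E)]^k \sup_{(x_1,\ldots,x_k)\in E^k}|\Phi(x_1,\ldots,x_k)| = [\mu(E)]^k \mathcal S(E)$, so any lower bound $\mathcal A(E) \geq c\,[\mu(E)]^{k+s}$ forces $\mathcal S(E) \geq c\,[\mu(E)]^s$ after dividing by $[\mu(E)]^k$ (the case $\mu(E)=0$ being vacuous). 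This immediately gives $c_{\mu,s}' \geq c_{\mu,s}$ for the optimal constants, hence $\|\mathcal S\|_{\mu,s} \geq \|\mathcal A\|_{\mu,s}$.

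The substantive direction is $\|\mathcal A\|_{\mu,s} \gtrsim \|\mathcal S\|_{\mu,s}$: starting from a sup-type lower bound, I must manufacture an integral-type lower bound. The key structural input is that $\Phi$ is a polynomial of bounded degree vanishing to some order $q\geq 1$ on the diagonal; I expect the relevant mechanism to be that $|\Phi(x_1,\ldots,x_k)|$ cannot be small on too large a portion of $E^k$ without $\mathcal S$ itself being forced small on a slightly smaller subset. Concretely, the plan is: given $E$ with $\mu(E) > 0$, normalize so that $\mathcal S(E)$ is comparable to $\|\mathcal S\|_{\mu,s}[\mu(E)]^s$; pick a near-extremal tuple $(x_1^0,\ldots,x_k^0)\in E^k$; and then run a pigeonholing/sub-level-set argument on the polynomial $y\mapsto \Phi(x_1^0,\ldots,\widehat{x_j^0}\hbox{-slot},\ldots)$ in each coordinate, using that a polynomial of degree $D$ which is large at one point cannot be tiny on more than a controlled fraction of any set (a one-dimensional sub-level set / Remez-type bound applied coordinate-by-coordinate, or rather its $n$-dimensional polynomial analogue). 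Iterating this over the $k$ slots produces a product subset $E'\subset E$ with $\mu(E') \gtrsim \mu(E)$ on which $|\Phi|\gtrsim \mathcal S(E)$, which when integrated yields $\mathcal A(E) \geq \mathcal A(E'\cap E, \ldots) \gtrsim [\mu(E')]^k \mathcal S(E) \gtrsim [\mu(E)]^k \cdot \|\mathcal S\|_{\mu,s}[\mu(E)]^s$, the desired bound.

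The main obstacle I anticipate is the coordinate-by-coordinate sub-level-set step: one must control the set where a multivariate polynomial is small, and the naive Remez inequality applies to polynomials on intervals, not on arbitrary Borel sets against an arbitrary measure $\mu$. The resolution I would pursue is to avoid measure-theoretic sub-level estimates entirely and instead argue combinatorially: use the near-extremality of the tuple together with multilinearity-type control of how $\Phi$ varies when one slot is moved, replacing "large fraction of $E$" by "a sub-tuple still in $E^k$ witnessing comparable size," so that the only quantity being tracked is $\mathcal S$ restricted to subsets, not sub-level measures. Making the bookkeeping uniform in $\mu$ — so the implicit constant depends only on $(n,k,s,\deg\Phi)$ and not on $\mu$ or $E$ — is the delicate point, and I expect it to hinge on an induction on $k$ or on $\deg\Phi$ that peels off one variable at a time while preserving the polynomial structure in the remaining ones.
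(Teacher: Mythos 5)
Your easy direction is correct and matches the paper: the pointwise estimate $\mathcal A(E) \leq [\mu(E)]^k \mathcal S(E)$ yields $\|\mathcal S\|_{\mu,s} \geq \|\mathcal A\|_{\mu,s}$ immediately.

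The hard direction, however, rests on a step that cannot be made to work. Your plan culminates in producing a subset $E' \subset E$ with $\mu(E') \gtrsim \mu(E)$ on which $|\Phi| \gtrsim \mathcal S(E)$ holds pointwise throughout $(E')^k$, and then integrating this lower bound over $(E')^k$. But no such $E'$ exists: $\Phi$ vanishes identically on the diagonal, so as soon as $E'$ is nonempty the tuple $(x,\ldots,x) \in (E')^k$ gives $\Phi(x,\ldots,x)=0$, and by continuity $|\Phi|$ is small on a full neighborhood of the diagonal in $(E')^k$. This is outright fatal whenever $\mu$ has an atom (the diagonal then has positive $\mu^k$-measure), and it is not salvageable for continuous $\mu$ either, because the mass of $(E')^k$ lying near the diagonal --- where $|\Phi|$ must be tiny --- cannot be made negligible uniformly in $\mu$ and $E$. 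Your fallback of tracking ``near-extremal sub-tuples'' rather than sets compounds the problem: a finite collection of tuples carries no $\mu^k$-mass and so cannot lower-bound the integral $\mathcal A(E)$.

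The missing idea is that one should not seek a pointwise lower bound on $|\Phi|$ at all. The paper's mechanism (Lemma \ref{c0lemma}) is a simultaneous \emph{reverse} Tchebyshev inequality for finite-dimensional spaces of functions: for any $\tau>0$ there is a \emph{single} set $E_\tau$ with $\mu(X\setminus E_\tau)<\tau^{-1}$ such that $\sup_{x\in E_\tau}|f(x)|\leq \tau d\int|f|\,d\mu$ for \emph{every} $f$ in the space. The proof of that lemma is a Cramer's-rule/extremal-basis argument in the finite-dimensional vector space of polynomials of degree at most $\deg\Phi$, quite different from pigeonholing extremal tuples in $E^k$. Applying it one slot at a time to the one-variable slices of $\Phi$ (which all lie in that fixed finite-dimensional space) and taking a supremum over $y_1,\ldots,y_k\in E_\tau$ gives
\[
\mathcal S(E_\tau)\;\leq\;(C\tau)^{k}\,\mathcal A(E),
\]
with $C$ the dimension of the polynomial space, i.e.\ an $L^1$-against-$\mu$ estimate rather than a pointwise one, which is exactly what is compatible with the vanishing on the diagonal. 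Choosing $\tau\approx 1/\mu(E)$ so that $\mu(E_\tau)\gtrsim\mu(E)$ then closes the argument and makes the constant depend only on $(n,k,s,\deg\Phi)$. Your instinct that the classical Remez inequality (Lebesgue measure, intervals) is inadequate for arbitrary Borel $E$ and arbitrary $\mu$ is correct; Lemma \ref{c0lemma} is precisely the substitute, and the fact that one exceptional set works for the whole vector space simultaneously is what permits iterating over the $k$ coordinates.
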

The value of Theorem \ref{aseq} is that the nonconcentration functional $\mathcal S$ is generally much easier to calculate and estimate than $\mathcal A$. In particular, it is possible to characterize existence of nontrivial measures $\mu$ satisfying \eqref{snorm} in terms of a geometric measure-theoretic generalization of Hausdorff measure and a corresponding generalization of Frostman's Lemma. In in the key ``dimension'' for this measure, it is also possible to deduce detailed information about the Radon-Nykodym derivative of this generalized Hausdorff measure with respect to Lebesgue measure. When combined with Theorem \ref{aseq}, this gives an explicit calculation which can be carried out to verify the hypothesis \eqref{mainhyp}. Some of the most important results in this direction are summarized in the following theorem. 
\begin{theorem}

For any Borel set $E \subset \Omega$ and any $\sigma > 0$, the $\sigma$-dimensional weighted $\Phi$-Hausdorff measure of $E$ is defined to equal the quantity \label{bestmeasthm} 
\begin{equation}
\begin{split}
 \lambda^\sigma_\Phi(E)  := \lim_{\delta \rightarrow 0^+} \! \inf \left\{  \sum_{i=1}^\infty c_i \left[ \mathcal S(E_i) \right]^\sigma  ~ \! \right| ~ \chi_E \leq \sum_{i=1}^\infty c_i \chi_{E_i}, & \label{weightedhm0} \\
      c_i \geq  0 \mbox{ and }  \mathrm{diam}(E_i) & \leq \delta \mbox{ for all i}  \left. \vphantom{  \left[  \sum_i \mathcal S(E_i) \right]^s } \right\}. 
  \end{split}
 \end{equation}
  Then the following statements are true:
\begin{enumerate}
\item If  $\sigma > n/q$, then $\lambda^{\sigma}_\Phi(\Omega) = 0$. There are no Borel measures $\mu$ satisfying \eqref{snorm} nontrivially when $s = 1/\sigma$.
\item If $\sigma \leq n/q$, then there is a Borel measure $\mu$ satisfying  \eqref{snorm} nontrivially with $s = 1/\sigma$ if and only if $\lambda^{\sigma}_{\Phi}(\Omega) > 0$.
\item If $\sigma = n/q$, $\lambda^{\sigma}_\Phi$ is absolutely continuous with respect to Lebesgue measure and there is an explicit estimate (see \eqref{hausdens2}) for the pointwise magnitude of the Radon-Nykodym derivative. Moreover
\begin{equation} \mathcal S(E) \gtrsim \left[ \lambda^{\frac{n}{q}}_\Phi(E) \right]^{\frac{q}{n}} \gtrsim || \mathcal S||_{\mu,\frac{q}{n}} \left[ \mu(E) \right]^{\frac{q}{n}} \label{twoway} \end{equation}
for any Borel set $E$ and any nonnegative Borel measure $\mu$ satisfying \eqref{snorm}, with implicit constants depending only on $(n,k, q, \deg \Phi)$. In other words, the measure $\lambda^{n/q}_\Phi$ satisfies \eqref{snorm} itself and is, up to a multiplicative constant, the largest such measure. 
\end{enumerate}
\end{theorem}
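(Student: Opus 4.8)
The plan is to prove Theorem \ref{bestmeasthm} by building the theory of the weighted $\Phi$-Hausdorff measure $\lambda^\sigma_\Phi$ in direct analogy with classical Hausdorff measure and Frostman's lemma, using the nonconcentration functional $\mathcal S$ in place of the diameter raised to a power. The first task is to establish basic properties of $\lambda^\sigma_\Phi$: it is a Borel (in fact metric outer) measure because of the $\mathrm{diam}(E_i) \le \delta$ constraint and Carath\'eodory's construction, and it scales in the expected way under the covering. The crucial \emph{quantitative} input is a local lower bound on $\mathcal S$ near the diagonal: since $\Phi$ vanishes to order exactly $q$ on $\Delta$, there is a Taylor expansion whose leading-order term is a nonzero homogeneous polynomial of degree $q$ in the differences $x_j - x_1$, so that on a ball $B$ of radius $\rho$ (intersected suitably with $\Omega$) one has $\mathcal S(B) \gtrsim \rho^q$ times a factor measuring how spread out $B$ is, at least on a large subset. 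Conversely, since $\Phi$ is a polynomial of bounded degree, $\mathcal S(E) \lesssim (\mathrm{diam} E)^q$ on sets of small diameter near $\Delta$, up to lower-order corrections; pairing these two bounds is what forces the critical exponent to be $\sigma = n/q$, because $[\mathcal S(E_i)]^{n/q}$ is then comparable to $(\mathrm{diam} E_i)^n$, i.e.\ to the Lebesgue measure of a ball of that diameter.

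For part (1), when $\sigma > n/q$ I would cover $\Omega$ by $O(\delta^{-n})$ balls of radius $\delta$, use the upper bound $\mathcal S(E_i) \lesssim \delta^q$ (valid once $\delta$ is small, with a uniform constant from boundedness of the polynomial coefficients on a fixed compact exhaustion of $\Omega$), take all $c_i = 1$, and observe that $\sum_i [\mathcal S(E_i)]^\sigma \lesssim \delta^{-n} \delta^{q\sigma} = \delta^{q\sigma - n} \to 0$. The nonexistence of nontrivial $\mu$ then follows by contradiction: if $\mu$ satisfied \eqref{snorm} with $s = 1/\sigma$, then for any covering $\chi_\Omega \le \sum_i c_i \chi_{E_i}$ one gets $\mu(\Omega) \le \sum_i c_i \mu(E_i) \le \sum_i c_i (\mathcal S(E_i) / c_{\mu,s}')^{\sigma} \cdot (\text{something})$ — more precisely, from $\mathcal S(E_i) \ge c_{\mu,s}' [\mu(E_i)]^{1/\sigma}$ one deduces $\mu(E_i) \le (c_{\mu,s}')^{-\sigma} [\mathcal S(E_i)]^\sigma$, and summing the weighted version against $c_i$ gives $\mu(\Omega) \lesssim \lambda^\sigma_\Phi(\Omega) = 0$, hence $\mu = 0$. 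This last step is essentially the easy half of the Frostman-type correspondence and works for all $\sigma$, not just $\sigma > n/q$; it gives the ``only if'' direction of part (2) and the second inequality in \eqref{twoway}.

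For the ``if'' direction of part (2) — the hard direction — I would prove a Frostman-type lemma: if $\lambda^\sigma_\Phi(\Omega) > 0$ then there is a nonzero Borel measure $\mu$ supported in $\Omega$ with $\mu(E) \lesssim [\mathcal S(E)]^\sigma$ for all Borel $E$, which is exactly \eqref{snorm} with $s = 1/\sigma$. The standard route is a dyadic/martingale construction: using the net-measure version of $\lambda^\sigma_\Phi$ (which is comparable to $\lambda^\sigma_\Phi$ up to constants depending on $n$), build $\mu$ by a stopping-time argument on a dyadic decomposition of a cube, distributing mass at each scale proportionally to the weighted-Hausdorff ``capacity'' of the sub-cubes and halting a branch when the accumulated mass would violate the target bound $\mu(Q) \approx [\mathcal S(Q)]^\sigma$. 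The positivity of $\lambda^\sigma_\Phi(\Omega)$ guarantees the total mass is bounded below; a maximal-function / summation-by-scales argument then upgrades the dyadic bound to the bound for arbitrary Borel $E$. The only nonstandard ingredient is that $\mathcal S$ replaces $(\mathrm{diam})^{q\sigma}$, so I need $\mathcal S$ to behave ``subadditively enough'' under refinement — specifically that $\mathcal S$ of a union controls the pieces and that $\mathcal S$ is monotone under inclusion, both of which are immediate from the definition as a supremum — and I need the constant in ``$\lambda^\sigma_\Phi$ comparable to its net-measure version'' to be uniform, which follows from the polynomial (bounded-degree) nature of $\Phi$ via a covering argument comparing arbitrary small-diameter sets to dyadic cubes.

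Part (3) is where the main work and the main obstacle lie. Absolute continuity of $\lambda^{n/q}_\Phi$ with respect to Lebesgue measure follows from the upper bound in part (1)'s reasoning applied at the critical exponent: a Lebesgue-null set can be covered efficiently enough that $\sum_i [\mathcal S(E_i)]^{n/q} \lesssim \sum_i (\mathrm{diam} E_i)^n$ is small, so $\lambda^{n/q}_\Phi$ of it vanishes. The hard part is the explicit pointwise estimate \eqref{hausdens2} for the Radon--Nikodym derivative $d\lambda^{n/q}_\Phi / dx$, together with the lower bound $\mathcal S(E) \gtrsim [\lambda^{n/q}_\Phi(E)]^{q/n}$ in \eqref{twoway}. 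For the density I would use a Lebesgue differentiation / Besicovitch covering argument: at a.e.\ point $x_0$, $\lambda^{n/q}_\Phi(B(x_0,\rho))$ is comparable to $\rho^n$ times a quantity built from the leading homogeneous part of the Taylor expansion of $\Phi$ at the diagonal point $(x_0,\dots,x_0)$ — essentially an ``affine-invariant'' integral of that leading form over the unit ball, which is the higher-$r$ replacement for Oberlin's affine curvature density. Establishing that the lim sup and lim inf of $\lambda^{n/q}_\Phi(B(x_0,\rho))/\rho^n$ agree and equal this explicit expression requires: (a) the lower bound $\mathcal S(E) \gtrsim [\mathrm{diam}(E)]^q \cdot (\text{spread factor})$ on sets not too concentrated in a lower-dimensional variety, which is a compactness/dimension argument exploiting that the leading form of $\Phi$ is not identically zero; and (b) control of the error terms, using that a polynomial of degree $d$ that vanishes to order $q$ on $\Delta$ can be written as its degree-$q$ part plus a remainder that is $o$ of it at small scales, uniformly on compacta. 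I expect step (a) — proving a robust, quantitative lower bound for $\mathcal S$ on ``spread-out'' sets from the sole hypothesis that the diagonal vanishing order is exactly $q$ — to be the principal obstacle, since it is precisely the place where the geometry of the zero set of the leading form enters and where one must rule out pathological concentration; once that is in hand, \eqref{twoway} follows by combining the density estimate with the Frostman measure from part (2) and the easy inequality already noted.
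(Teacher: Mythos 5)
Your outline is sound for the easy half of the theorem — the vanishing of $\lambda^\sigma_\Phi$ for $\sigma > n/q$ via a covering and the ``only if'' direction of part (2) via the elementary chain $\mu(E) \le \sum_i c_i \mu(E_i) \le \mathrm{const}\cdot\sum_i c_i [\mathcal S(E_i)]^\sigma$, which matches \eqref{basicineq} in the paper. But there are two genuine gaps in the hard half, and they are located exactly where the paper does the heavy lifting.

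First, your Frostman-type lemma via a dyadic stopping-time on a net-measure version of $\lambda^\sigma_\Phi$ does not go through, because the gauge $\mathcal S$ is not comparable between a Borel set $E$ and a minimal dyadic cube $Q \supset E$ of similar diameter. Consider $\Phi(x,y) = x_1 - y_1$ on $\R^2$ and $E$ a short vertical segment: then $\mathcal S(E) = 0$ while $\mathcal S(Q)$ is comparable to the side of $Q$. So the final step of your construction — passing from $\mu(Q) \lesssim [\mathcal S(Q)]^\sigma$ on dyadic cubes to $\mu(E) \lesssim [\mathcal S(E)]^\sigma$ on arbitrary $E$ by enclosing $E$ in a dyadic cube — fails, because the needed inequality $\mathcal S(Q) \lesssim \mathcal S(E)$ is false. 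The paper avoids this entirely by taking Howroyd's functional-analytic route (Lemma \ref{frostmanL}): the sublinear functional $p_{\sigma,\delta}$ built directly from the weighted coverings is the right object for Hahn--Banach and yields the measure with the correct bound against $\mathcal S$ on arbitrary Borel sets, with no dyadic intermediary.

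Second, and more seriously, you leave open the first inequality of \eqref{twoway}, namely $\mathcal S(E) \gtrsim [\lambda^{n/q}_\Phi(E)]^{q/n}$ for \emph{all} Borel $E$ with a uniform constant. You correctly flag this as ``the principal obstacle,'' but the route you sketch — a Lebesgue differentiation / Besicovitch argument on balls $B(x_0,\rho)$ combined with a compactness argument for a lower bound of the form $\mathcal S(E)\gtrsim (\mathrm{diam}\,E)^q\cdot(\text{spread factor})$ — only controls $\mathcal S$ on balls and therefore only recovers the \emph{upper} bound on the density (which the paper proves by the parallelepiped covering \eqref{rn0}). It does not produce a single measure whose $\mathcal S$-lower bound holds uniformly on arbitrary sets: a generic Borel set $E$ with small Lebesgue measure can be extremely thin and aligned with the zero set of the degree-$q$ leading form of $\Phi$, exactly the situation a compactness argument does not handle. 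The paper's actual mechanism is Theorem \ref{betterthm} together with Lemma \ref{multisystem}: given \emph{any} bounded Borel $E$ of positive $\mu$-measure, one constructs a multisystem of commuting vector fields $Y_1,\dots,Y_n$ and a subset $E'\subset E$ of comparable measure on which a Bernstein/reverse-Sobolev inequality $\sup_{E'} |(Y\cdot\partial)^\alpha f| \lesssim \sup_E |f|$ holds uniformly over the polynomial class, then iterates this across the $k$ slots of $\Phi$ to bound $\mathcal S(E)$ from below by the explicit density \eqref{betterdens}. This is not a differentiation argument and has no soft substitute; it requires the Cramer-rule normalization of Lemma \ref{c0lemma}, the vector-field construction \eqref{vecdef}, and the B\'ezout-type solution-counting on the lifted manifolds $\mathcal M^{(N)}$ to get constants depending only on $(n,k,q,\deg\Phi)$. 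Without these, parts (3) and the ``if'' direction of (2) at the critical exponent remain unproved.
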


\subsection{Examples}

It is worthwhile to briefly examine the implications of Theorem \ref{bestmeasthm} in some familiar and unfamiliar settings. \label{examples1}

{\bf Example 1 (Hausdorff measure).}
When $\Phi(x,y) := x-y$ for $x,y \in \R^n$, $\mathcal S(E)$ is the diameter of $E$ and $\lambda^\sigma_\Phi$ is equal to the classical $\sigma$-dimensional Hausdorff measure $\mathcal H^\sigma$ (see Federer \cite[2.10.24]{federer1969}). The order of vanishing $q$ is simply $1$. The first inequality of \eqref{twoway} states that
\begin{equation*} |E| \lesssim \left[ \mathrm{diam}(E) \right]^n. \end{equation*}
In its sharp form with optimal constant, this is known as the isodiametric inequality \cite[2.10.33]{federer1969}. Likewise, if $\mu$ is any nonnegative Borel measure satisfying
\begin{equation} \mu(E) \lesssim \left[ \mathrm{diam}(E) \right]^n \label{dimension} \end{equation}
for every Borel set $E \subset \Omega$, then
 \eqref{twoway} implies that $\mu(E) \lesssim |E|$. Thus Lebesgue measure on $\R^n$ is, up to a constant, the largest measure on $\R^n$ satisfying an isodiametric inequality \eqref{dimension}. It should also be noted that the inequality \eqref{dimension} is, modulo the constant, equivalent to the upper Ahlfors regularity condition \[ \mu(B_r(x)) \lesssim r^n \] for all Euclidean balls $B_r(x) \subset \R^n$, since every set $E$ of bounded diameter is contained in a ball of comparable diameter by virtue of Jung's Theorem \cite{federer1969}.

{\bf Example 1\textsuperscript{\ensuremath{\prime}} (Hausdorff measure).}
To generalize the first example, suppose that  $\gamma : \R^{p} \rightarrow \R^n$, $p < n$, is any locally injective polynomial function and set $\Phi(x,y) := \gamma(x) - \gamma(y)$. Locally the measure $\lambda^{p}_\Phi$ on $\R^p$ pushes forward to equal exactly the $p$-dimensional Hausdorff measure on $\R^n$ restricted to the image of $\gamma$. Because the multiplicity of images of $\gamma$ is bounded in terms of the degree, the measures must be comparable globally as well. The order of vanishing $q$ is still $1$, and by \eqref{twoway}, it follows that the $p$-dimensional Hausdorff measure on the image of $\gamma$ also satisfies an isodiametric inequality on $\R^n$, i.e.,
\begin{equation} \mathcal H^p( \gamma(E)) \lesssim [ \mathrm{diam} ( \gamma(E)) ]^p. \label{isodiam2} \end{equation}
Such an inequality can only hold in general because $\gamma$ is polynomial; if $\gamma$ were merely $C^\infty$ it  is easy to construct a highly oscillatory curve, for example, with infinite length inside a ball of finite radius. It is also worth noting that up to multiplicative constants, the measure $\mathcal H^p$ restricted to the image of $\gamma$ is essentially the largest measure satisfying the $p$-dimensional upper Ahlfors regularity condition equivalent to \eqref{isodiam2}.

{\bf Example 2 (Determinantal measure).} An interesting nontrivial example \label{detex} on the space of $n \times n$ matrices is to set $\Phi(A_1,A_2) := \det (A_1 - A_2)$ for any $A_1, A_2 \in \R^{n \times n}$. The order of vanishing $q$ equals $n$. Using the the estimate \eqref{hausdens2} for the magnitude of the Radon-Nykodym derivative $d \lambda^n_\Phi / dx$, it will be shown (see Proposition \ref{calcprop}) that $\lambda_\Phi^{n}$ is comparable to Lebesgue measure on $\R^{n \times n}$. Thus, the first inequality of \eqref{twoway} becomes a determinantal isodiametric inequality for subsets of $\R^{n \times n}$, namely,
\[ |E| \lesssim  \left[ \sup_{A,A' \in E} |\det (A - A')| \right]^n \]
for all Borel sets $E \subset \R^{n \times n}$. The implications of this inequality for a corresponding Radon-like operator are detailed in Section \ref{examples2}.

{\bf Example 3 (Affine measure).}
For $\gamma$ as in Example 1$^\prime$, let
 \[ \Phi(x_1,\ldots,x_{n+1}) := \det(\gamma(x_1) - \gamma(x_{n+1}),\ldots,\gamma(x_n) - \gamma(x_{n+1})),\] where the determinant of an ordered list of $n$ vectors in $\R^n$ is defined to equal the determinant of the $n \times n$ matrix whose $j$-th column contains the ordered coordinates of the $j$-th vector in the standard basis.
The measure $\lambda^{\sigma}_\Phi$ pushes forward to a measure on the graph of $\gamma$ which is
 is dominated by D.~Oberlin's affine measure of dimension $n \sigma$ \cite{oberlin2003} up to a uniform multiplicative constant; while it is not clear that these two measures are comparable in all cases, it is a consequence of later arguments in this paper that the measures must be comparable when $\sigma = p/q$. For this particular value of $\sigma$, $\lambda^{\sigma}_\Phi$ is comparable to the recently-defined affine hypersurface measure \cite{gressman2017}, which is the optimal measure satisfying Oberlin's affine curvature condition
 \begin{equation} \mu(R) \lesssim |R|^{\frac{q}{p}} \label{oberlin2} \end{equation}
 for all boxes $R \subset \R^n$ of arbitrary orientation. Similar to the Hausdorff measure and the upper Ahlfors regularity condition, the Oberlin condition \eqref{oberlin2} is in fact equivalent to the {\it a priori} stronger inequality \eqref{snorm} (see Section \ref{basicgmt}).
 
 {\bf Example 4 (Projective Measure on Forms).}
When the underlying space is taken to be the decomposable\footnote{Here ``decomposable'' means expressible as an $r$-fold wedge product of $1$-vectors.} $r$-vectors in $\Lambda^{r}(\R^{rk})$ for positive integers $r$ and $k$,
let \begin{equation}
\begin{split}
\mathcal {P}^\sigma(E) := \lim_{\delta \rightarrow 0^+} \! \inf  \left\{ \sum_{i=1}^\infty c_i\sup_{\omega_1,\ldots, \omega_k \in E_i} \left|\frac{\omega_1 \wedge \cdots \wedge \omega_k}{e_1 \wedge \cdots \wedge e_{rk}} \right|^\sigma \   \right|  \  \chi_E  \leq & \sum_{i=1}^\infty c_i \chi_{E_i},  \\
c_i \geq 0 \mbox{ and } \mathrm{diam} (E_i) \leq \delta & \mbox{ for all } i \left. \vphantom{ \sum_{i=1}^\infty \sup_{\omega_1,\ldots, \omega_k \in E_i} \left|\frac{\omega_1 \wedge \cdots \wedge \omega_k}{e_1 \wedge \cdots \wedge e_{rk}} \right|^\sigma} \right\} 
\end{split}
\end{equation}
(where diameter is with respect to any metric inducing the usual topology).
The form $\omega(t,x)$ defined by \eqref{normalform} is always decomposable (see Sections \ref{radonsec} and \ref{examples2});
if $t \mapsto \omega(t,x)$ is locally injective for each $x$, then the push forward of the measure $\lambda_{\Phi_x}^\sigma$ on $\R^n$ to the graph of $\omega(\cdot,x)$ will be comparable to the restriction of $P^{\sigma}$ to the same graph.
If $q$ is the smallest integer such that $\Phi_x(t_1,\ldots,t_k)$ vanishes to order $q$ on the diagonal for some $x$, then setting
\[ \widetilde \Omega := \set{ (t,x) \in \R^n \times \R^{N_2}}{  \frac{d \lambda^{\frac{n}{q}}_{\Phi_x}}{dt} (t) \geq c \delta^\frac{n}{q}} \]
for an appropriate constant $c$ depending only on $(n,q,N_1,N_2,\deg \gamma)$ yields the inequality \eqref{mainhyp} with $s = q/n$ by Theorem \ref{aseq} together with the fact that
\[ \mathcal S(E \cap \Omega_x) \gtrsim \left[ \lambda^{\frac{n}{q}}_{\Phi_x}(E \cap \Omega_x) \right]^\frac{q}{n} \geq \left[ c \delta^{\frac{n}{q}} |E \cap \Omega_x| \right]^{\frac{q}{n}} \]
when $\Omega_x$ is the set where the Radon-Nykodym derivative $d \lambda^{n/q}_{\Phi_x} / dt$ exceeds $c \delta^{n/q}$.

\subsection{Structure of the paper}

Section \ref{radonsec} is a self-contained proof of Theorem \ref{radon} using a combinatorial approach much like earlier work on uniform sublevel Radon-like inequalities and averages over $n$-dimensional submanifolds of $\R^{2n}$ \cites{gressman2013,gressman2015}.
Section \ref{gmtsec} contains a proof of Theorem \ref{aseq} using elementary convex geometry as via Lemma \ref{c0lemma},a n earlier version of which appears in work on affine submanifold measures \cite{gressman2017}. This section also contains some basic GMT observations about $\Phi$-Hausdorff and weighted $\Phi$-Hausdorff measures which will be used in the proof of Theorem \ref{bestmeasthm}. In particular, Section \ref{basicgmt} contains a proof of the relevant generalization of Frostman's lemma, which is a rather direct reinterpretation of Howroyd's proof as appearing in Mattila's book \cite{mattila}.
Section \ref{mainthmsec} provides the bulk of the proof of Theorem \ref{bestmeasthm}. The case $\sigma < n/q$ is essentially an immediate consequence of Lemma \ref{frostmanL}, while the case $\sigma \geq n/q$ relies on a scaling argument to show that $\Phi$-Hausdorff measure of dimension $\sigma$ must be absolutely continuous with respect to Lebesgue measure and to consequently estimate the Radon-Nykodym derivative. At this point, the remaining portions of Theorem \ref{bestmeasthm} are reduced to establishing Theorem \ref{betterthm}, which gives an explicit construction for any $s$ of a measure (possibly zero) satisfying \eqref{snorm}.
The proof of Theorem \ref{betterthm} is then reduced to proving Lemma \ref{multisystem} (see also \cite{gressman2017}), which is the content of Section \ref{mainlemmasec}. As a part of the proof of Lemma \ref{multisystem}, Section \ref{mainlemmasec} also identifies the underlying intrinsic geometric objects which play an important algebraic role in the lemma and relate closely to earlier geometric sublevel set estimates \cite{gressman2010II}.
Finally, Section \ref{examples2} gives some example applications of Theorem \ref{radon} which correspond to the GMT examples from Section \ref{examples1}.

\section{Proof of Theorem \ref{radon}}
\label{radonsec}
\begin{proof}[Proof of Theorem \ref{radon}]
As defined in the introduction, suppose that $\gamma(t,x)$ is a polynomial map from $\R^n \times \R^{N_2}$ into $\R^{N_1}$. Let $r := N_1 - n$, and suppose that $N_2 = r k$ for some integer $k$.  The basic structure of this proof is to estimate the quantity
\begin{equation} Q(F) := \int_{\R^{N_2}}  \int_{(\R^n)^k} |\Phi_x(t_1,\ldots,t_k)| \prod_{j=1}^k \chi_F(\gamma(t_j,x)) \chi_{\widetilde \Omega}(t_j,x) dt_1 \cdots dt_k dx \label{centralq} \end{equation}
from below and above, where $\Phi_x(t_1,\ldots,t_k)$ is defined to be the Jacobian determinant of the map $(x,t_1,\ldots,t_k) \mapsto (\gamma(t_1,x),\ldots, \gamma(t_k,x))$. The main upper bound for $Q(F)$ comes from the change of variables formula and B\'ezout's Theorem: for any $(u_1,\ldots,u_k) \in (\R^{N_1})^k$, since $N_1k = N_2 + nk$, B\'ezout's Theorem guarantees that the number of connected components in $\C^{N_1k}$ of the solution set of the system of equations 
\begin{equation} (\gamma(t_1,x),\ldots,\gamma(t_k,x)) = (u_1,\ldots,u_k) \label{system} \end{equation} is at most the product of the degrees of the polynomials (see Fulton \cite[Chapter 8, Section 4]{fulton1984}).
This means that the number of real solutions of the system where the Jacobian is nonvanishing cannot exceed this same upper bound, since the nonvanishing of the Jacobian at a real solution guarantees that such a solution will be isolated in complex space as well. Now by the change of variables formula, if the number of solutions $(x,t_1,\ldots,t_k)$ of the system \eqref{system} inside the domain of the integral $Q(F)$ is never greater than $N$ for any choice of $(u_1,\ldots,u_k)$, then
\begin{equation} Q(F) \leq N \int_{(\R^{N_1})^k} \prod_{j=1}^k \chi_F(u_j) d u_1 \cdots d u_k = N |F|^k. \label{covf} \end{equation}
Without loss of generality, it may be assumed that Jacobian determinant is nonvanishing at every counted solution of the system (since the integral on the set where $|\Phi_x(t_1,\ldots,t_k)| = 0$ is necessarily zero), i.e., $N$ need only bound the number of isolated solutions of \eqref{system} for a given right-hand side $(u_1,\ldots,u_k)$, which B\'{e}zout's Theorem guarantees is bounded by the product of degrees.

To estimate \eqref{centralq} from below,
recall the definition \eqref{normalform} of the form $\omega$.  The key fact to establish is that the functional $\Phi_x$ is indeed the Jacobian determinant of the map $(x,t_1,\ldots,t_k) \mapsto (\gamma(t_1,x),\ldots, \gamma(t_k,x))$, i.e., that
\begin{equation} \Phi_x(t_1,\ldots,t_k) := \det \frac{\partial ( \gamma(t_1,x),\ldots, \gamma(t_k,x))}{\partial (x,t_1,\ldots,t_k)} = \frac{\omega (t_1,x)  \wedge \cdots \wedge \omega (t_k,x)}{dx_1 \wedge \cdots \wedge dx_{N_2}}. \label{jacob} \end{equation}
To prove \eqref{jacob}, first observe that the Jacobian matrix has block structure
\begin{equation} \left[ \begin{array}{ccccc} 
\frac{\partial \gamma}{\partial x} (t_1,x) & \frac{\partial \gamma}{\partial t}(t_1,x) & 0 & \cdots & 0  \\
\vdots & 0 & \ddots & \ddots & \vdots \\
\frac{\partial \gamma}{\partial x} (t_{k-1},x) & \vdots & \ddots & \frac{\partial \gamma}{\partial t}(t_{k-1},x) & 0 \\
\frac{\partial \gamma}{\partial x} (t_{k},x) & 0 & \cdots & 0 & \frac{\partial \gamma}{\partial t}(t_{k},x)
\end{array} \right] \label{jacobblock} \end{equation}
where $\partial \gamma / \partial x$ is an $N_1 \times N_2$ block of partial derivatives of $\gamma$ (with the coordinates of $\gamma$ corresponding to rows and the partial derivatives in the coordinate directions of $x$ corresponding to columns) and $\partial \gamma / \partial t$ is a corresponding $N_1 \times n$ block of partial derivatives.
 To simplify the determinant of the matrix \eqref{jacobblock}, label the coordinates of $t_j$ as $(t_{j1},\ldots,t_{jn})$. It will be necessary to use the identity
\begin{align}
( a_{11} & d x_1 + \cdots + a_{1N_2} dx_{N_2} + b_{11} d t_{j1} + \cdots + b_{1n} d t_{jn} ) \wedge \cdots \nonumber \\
& \cdots \wedge ( a_{N_1 1} d x_1 + \cdots + a_{N_1 N_2} dx_{N_2} + b_{N_1 1} d t_{j1} + \cdots + b_{N_1 n} d t_{jn}) \label{wedgep} \\
& \qquad = \omega_j \wedge d t_{j1} \wedge \cdots \wedge d t_{jn} + E_j \nonumber
\end{align}
where one defines
\[ \begin{split}
 \omega_j & := \\ & \mathop{\sum_{i_1,\ldots,i_{r}=1}^{N_2}}_{i_1 < \cdots < i_{r}} \! \! \! \det \left[ \begin{array}{cccccc} a_{1i_1} & \cdots &  a_{1 i_{r}} & b_{11} & \cdots & b_{1n} \\
\vdots & \ddots & \vdots & \vdots & \ddots & \vdots \\
a_{N_1 i_1} & \cdots & a_{N_1 i_{r}} & b_{N_11} & \cdots & b_{N_1 n} \end{array} \right] dx_{i_1} \wedge \cdots \wedge d x_{i_{r}} 
\end{split} \]
and observes of the remainder $E_j$ that it is spanned by all $N_1$-fold wedge products of $dx_1,\ldots,$ $dx_{N_2}$, $dt_{j1},\ldots,$ $dt_{jn}$ which omit $d t_{ji}$ for at least one index $i \in \{1,\ldots,n\}$. The proof of the identity is essentially immediate after observing that when computing the correct coefficient of $dx_{i_1} \wedge \cdots \wedge dx_{i_r}$ in $\omega_j$, it suffices to assume that $a_{ji} = 0$ for $i \neq i_1,\ldots,i_{r}$. 

To use the identity \eqref{wedgep}, first express the determinant as the coefficient of $dx_1 \wedge \cdots \wedge dx_{N_2} \wedge d t_{11} \wedge \cdots \wedge dt_{1n} \wedge \cdots \wedge dt_{k1} \wedge \cdots \wedge dt_{kn}$ in an $(N_2 + kn)$-fold wedge product of one forms with coefficients drawn from the rows of the block-form matrix \eqref{jacobblock}. The wedge of the forms in the $j$-th block of rows is given by \eqref{wedgep} when each coefficient $a_{ii'}$ is replaced the $(i,i')$-entry of the matrix $({\partial \gamma}/{\partial x})(t_j,x)$ and each coefficient $b_{ii'}$ is replaced the $(i,i')$-entry of the matrix $({\partial \gamma}/{\partial t})(t_j,x)$. In particular, this yields the identity $\omega_j = \omega(t_j,x)$. To compute the Jacobian determinant \eqref{jacob}, it suffices to take the wedge of the expressions \eqref{wedgep} over $j=1,\ldots,k$ and show that the remainders $E_j$ do not influence the coefficient of $dx_1 \wedge \cdots \wedge dx_{N_2} \wedge dt_{11} \wedge \cdots \wedge dt_{1n} \wedge \cdots \wedge dt_{k1} \wedge \cdots \wedge dt_{kn}$. Because the variables $t_j$ appear only in the $j$-th block of rows, there is only one way for $dt_{j1} \wedge \cdots \wedge dt_{jm}$ to be a factor in the full wedge product: it must appear explicitly in a corresponding term of \eqref{wedgep}. In other words, when taking the wedge over all $j$, any wedge product including an $E_j$ will not contain all $n$ factors $dt_{j1},\ldots,d t_{jn}$. In the place of the missing $d t_{ji}$, every term of $E_j$ must necessarily contain more than $r$ factors drawn from $dx_{1},\ldots,d x_{N_2}$. Since every term of the wedge product \eqref{wedgep} must contain {\it at least} $r$ factors drawn from $dx_1,\ldots,dx_{N_2}$, it follows by the pigeonhole principle that in the full $k$-fold wedge product representing the determinant \eqref{jacobblock}, when expanded by multilinearity, any term including $E_j$ must be expressible as a sum of wedge products with at least one duplicate $dx_i$. Thus \eqref{jacob} must hold.

It is worth pausing briefly to make the observation that $\omega$ must be decomposable. First note that the form $\omega$ as defined by \eqref{normalform} is independent of the chosen coordinate systems on $\R^{N_2}$ and $\R^{n}$. 
If $t \mapsto \gamma(t,x)$ does not have injective differential, then $\omega(t,x)$ vanishes. Thus, when $\omega$ is nonzero, the dimension of the quotient $\R^{N_1}$ modulo the image of the differential $d_t \gamma(t,x)$ always has dimension $r = N_1 - n$. The image of the differential $d_x \gamma(t,x)$ in this quotient space is therefore at most $r$-dimensional, meaning that whenever $\omega(t,x)$ is not zero, it is always possible to choose a coordinate system near any given $x$ for which ${\partial \gamma}/{\partial x_i}$ belongs to the span of the $t$ partial derivatives of $\gamma$ whenever $i > r$. Computing the form \eqref{normalform} in these coordinates shows that $\omega$ must be a multiple of $dx_1 \wedge \cdots \wedge dx_r$ and is therefore decomposable. Moreover,  it follows that
$\omega(t_1,x) \wedge \omega(t_2,x)$
vanishes to at least order $r$ when $t_1 = t_2$ and $\omega(t_1,x) \neq 0$. This then implies that $\Phi_x(t_1,\ldots,t_k)$ vanishes to order at least $r (k-1)$ on the diagonal $\Delta$ at all points where $\omega(t,x) \neq 0$. 

Returning to \eqref{centralq}, 
fix a Borel measurable set $F \subset \R^{N_1}$. By \eqref{covf},
\[ \int \left| \Phi_x(t_1,\ldots,t_k) \right| \left[ \prod_{j=1}^k \chi_F(\gamma(t_j,x)) \chi_{\widetilde \Omega}(t_j,x) \right] dx dt_1 \cdots dt_k \lesssim |F|^k, \]
where the implicit constant can be taken to equal the maximum number of isolated solutions $(x,t_1,\ldots,t_k)$ of the system $(\gamma(t_1,x),\ldots,\gamma(t_k,x)) = (u_1,\ldots,u_k)$ as $u_1,\ldots,u_k$ range over $\R^{N_1}$. Defining $F_x \subset \R^m$ to equal
\[ F_x := \set{t \in \R^n}{ \gamma(t,x) \in F, \ (t,x) \in \widetilde \Omega} \]
(which will be a Borel subset of $\R^n$ since $\gamma$ is a continuous function of $t$), it follows by Fubini that
\[ \int \left[ \int_{F_x^k} \left| \Phi_x(t_1,\ldots,t_k) \right| dt_1 \cdots d t_k \right] dx \lesssim |F|^k. \]
By the main hypothesis \eqref{mainhyp} of Theorem \ref{radon}, it must be the case that
\begin{equation} \int \delta |F_x|^{k+s} dx \leq \int \left[ \int_{F_x^k} \left| \Phi_x(t_1,\ldots,t_k) \right| dt_1 \cdots d t_k \right] dx \lesssim |F|^k \label{lastobs} \end{equation}
since for each $x$, $F_x \times \{x\} \subset \widetilde \Omega$. However, by the definition \eqref{theop} of the Radon-like operator $T$,
\[ |F_x| = T \chi_F(x) \]
for each $x$. Inserting this equality into \eqref{lastobs} and raising both sides to the power $1/(k+s)$ gives the conclusion \eqref{mainconc} of Theorem \ref{radon}.
\end{proof}

As a final remark concerning the proof, it should be noted that the constraint that $r = N_1 -n$ divides $N_2$ is only used in proving the upper bound for \eqref{centralq} via the change of variables formula. As weighted nonlinear Brascamp-Lieb inequalities (generalizing the results of Bennett, Carbery, Christ, and Tao \cites{bcct2008,bcct2010}) ultimately become available, it will be possible to remove the divisibility constraint at the cost of changing the definition of $\Phi_x$ to correspond to the correct weight for that context.

\section{Proof of Theorem \ref{aseq} and basic measure inequalities}
\label{gmtsec}
\subsection{Proof of Theorem \ref{aseq}}

The proof of Theorem \ref{aseq} begins with the following lemma, which generalizes Tchebyshev's inequality to finite dimensional vector spaces of functions. The heart of this generalization is to show that there exists a {\it single} set of controlled measure outside of which {\it all} functions in the vector space are uniformly bounded (when properly normalized). It extends earlier results for single-variable polynomials \cite{gressman2009} and real analytic functions \cite[Lemma 3]{gressman2017}. Although it will only be applied to Borel measures, measurability in the lemma may be taken with respect to any abstract $\sigma$-algebra.
\begin{lemma}
Suppose $\mu$ is a positive measure on some space $X$ and $\mathcal F$ is a $d$-dimensional real vector space of measurable functions from $X$ into some vector space with norm $|\cdot|$. \label{c0lemma} Then for any $\tau > 0$, there is a measurable set $E_\tau \subset X$ such that $\mu(X \setminus E_\tau) < \tau^{-1}$ for which every $f \in \mathcal F$ satisfies the inequality
\begin{equation} \sup_{x \in E_\tau} |f (x)| \leq \tau d \int |f| d \mu. \label{convex} \end{equation}
\end{lemma}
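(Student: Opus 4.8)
The plan is to read the statement as a Chebyshev/Markov inequality in which the exceptional set must be chosen once and for all for the entire finite-dimensional family $\mathcal F$, and to supply that uniformity via Auerbach's lemma. First I would clear away two routine reductions. Since \eqref{convex} is vacuous whenever $\int|f|\,d\mu=\infty$, it suffices to treat the subspace of $f\in\mathcal F$ with $\int|f|\,d\mu<\infty$, on which $p(f):=\int|f|\,d\mu$ is a seminorm. If $p$ is degenerate, its kernel $\mathcal N$ is spanned by finitely many functions each vanishing $\mu$-a.e., so there is one $\mu$-null set $Z$ outside of which every element of $\mathcal N$ vanishes identically; choosing a complement $\mathcal G$ to $\mathcal N$, on which $p$ is a genuine norm, and noting that for $f=g+h$ with $g\in\mathcal G$, $h\in\mathcal N$ one has $f(x)=g(x)$ for $x\notin Z$ and $\int|f|\,d\mu=\int|g|\,d\mu$, the estimate for $\mathcal G$ transfers to $\mathcal F$ after deleting $Z$ from the exceptional set. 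Hence I may assume $p$ is a norm, with $d$ replaced by $\dim\mathcal G\le d$ (which only strengthens \eqref{convex}).

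Next I would invoke Auerbach's lemma: maximizing $|\det[f_1,\dots,f_d]|$, computed in any fixed linear coordinates on $\mathcal F$, over the compact set $\{p(f_i)\le 1\}^{d}$ yields a basis $f_1,\dots,f_d$ with $p(f_i)=1$ whose dual coordinate functionals, expressed by Cramer's rule in terms of the same maximal determinant, satisfy $|c_i|\le p(f)$ for every $f=\sum_{i}c_i f_i$. Put $g(x):=\sum_{i=1}^{d}|f_i(x)|$; this is measurable and $\int g\,d\mu=\sum_i p(f_i)=d$. Define $E_\tau:=\{x\in X:\,g(x)\le\tau d\}$. Then $\mu(X\setminus E_\tau)=\mu(\{g>\tau d\})\le(\tau d)^{-1}\int g\,d\mu=\tau^{-1}$ by Markov's inequality, and the inequality is strict since one would otherwise be integrating over a positive-measure set a function strictly larger than $\tau d$. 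Finally, for $x\in E_\tau$ and $f=\sum_i c_i f_i$,
\[ |f(x)|\le\sum_{i=1}^{d}|c_i|\,|f_i(x)|\le\Big(\max_{i}|c_i|\Big)g(x)\le p(f)\cdot\tau d=\tau d\int|f|\,d\mu, \]
which is \eqref{convex}.

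The one genuinely substantive ingredient is the appeal to Auerbach's lemma, i.e.\ the existence of a basis all of whose coordinate functionals have $p$-operator norm exactly $1$. The obvious alternative---take any $p$-normalized basis and apply Chebyshev to each $|f_i|$ separately---loses an additional factor of $d$ when $\sum_i|c_i|$ is re-expressed through $p(f)$, and produces only $\tau d^{2}$ in place of the sharp $\tau d$. Auerbach's lemma is exactly what upgrades that to $\max_i|c_i|\le p(f)$ and fuses the two losses into a single factor of $d$. The remaining points---the bookkeeping for the seminorm kernel and the strictness in Markov's inequality---are routine.
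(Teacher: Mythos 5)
Your proof is correct and follows essentially the same route as the paper: the paper also reduces to the $\mu$-integrable subspace, splits off the $p$-kernel via a single null set, extracts a basis by maximizing a determinant over the unit ball of $p$ (what you cite as Auerbach's lemma), obtains the pointwise bound $|f(x)| \leq (\sum_j |f_j(x)|)\int |f|\,d\mu$ via Cramer's rule, and then applies Tchebyshev's inequality to $\sum_j |f_j|$ to define $E_\tau$.
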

\begin{proof}
The inequality \eqref{convex} is vacuously true for any $f \in {\mathcal F}$ (regardless of $\tau$ and $E_\tau$) for which the integral on the right-hand side is infinite. It therefore suffices to prove \eqref{convex} for the subspace of those $f \in \mathcal F$ for which the integral is finite (the triangle inequality guarantees that such functions are indeed a vector space). Since this subspace also has dimension at most $d$, we may assume without loss of generality that every $f \in \mathcal F$ is $\mu$-integrable. 

Next, let $\mathcal F_0$ be the subspace consisting of all $f \in \mathcal F$ such that $\int |f| d \mu = 0$. 
If $\mathcal F_0$ is nontrivial, let $\{h_1,\ldots,h_\ell\}$ be a basis of $\mathcal F_0$ and define
\[ X_0 := \set{x \in X}{ \sum_{i=1}^\ell |h_i(x)| > 0}. \]
Because $\mathcal F_0$ is a finite-dimensional vector space (by the triangle inequality again) and because each basis element $h_i$ vanishes identically on $X \setminus X_0$, every $f \in \mathcal F_0$ is identically zero on $X \setminus X_0$. Furthermore $\mu(X_0) = 0$; this follows because
\[ \int \sum_{i=1}^\ell |h_i(x)| d \mu = 0, \]
so by the Monotone Convergence Theorem and Tchebyshev's inequality,
\[ \mu(X_0) = \lim_{N \rightarrow \infty} \mu \left( \set{x \in X}{ \sum_{i=1}^\ell |h_i(x)| > \frac{1}{N}} \right) \leq \sup_{N>0} N \! \int \sum_{i=1}^\ell |h_i(x)| d \mu = 0. \]
If $\mathcal F_0$ happens to be trivial, set $X_0 := \emptyset$.

Now let $\mathcal F_1$ be any subspace of $\mathcal F$ which has trivial intersection with $\mathcal F_0$ and satisfies $\mathcal F = \mathcal F_0  + \mathcal F_1$. If $\mathcal F_1$ is trivial, then \eqref{convex} holds because $\mathcal F = \mathcal F_0$ and consequently fixing $E_\tau := X \setminus X_0$ gives $\mu(X \setminus E_\tau) = 0$ and $\sup_{x \in E_\tau} |f(x)| = 0$ for all $f \in \mathcal F$. Thus it may be assumed that the dimension of $\mathcal F_1$ equals $d_1 \in \{1,\ldots,d\}$.
Define $S$ to be the set of all $f \in \mathcal F_1$ such that
\[ \int |f| d \mu \leq 1. \] 
The mapping $f \mapsto \int |f| d \mu$ is continuous with respect to the vector space topology, and because $\mathcal F_0 \cap \mathcal F_1$ is trivial, $f \mapsto \int |f| d \mu$ is a norm on $\mathcal F_1$, which implies that $S$ must be compact. 
Fix $\det$ to be any nonzero alternating $d_1$-linear functional on $\mathcal F_1$. By continuity and compactness, $|\det(f_1,\ldots,f_{d_1})|$ attains its maximum for some  $(f_1,\ldots,f_{d_1}) \in S^{d_1}$. Note also that the value of the maximum cannot be zero, since by scaling this would force $\det$ to be identically zero.
By Cramer's rule, for any $f \in S$,
\[ f = \sum_{j=1}^{d_1} (-1)^{j-1} \frac{\det(f, f_1,\ldots,\widehat{f_j},\ldots, f_{d_1})}{\det (f_1,\ldots,f_{d_1})} f_j \]
where the circumflex $\widehat{\cdot}$ indicates that $f_j$ is omitted from the sequence of arguments of $\det$. In particular, by the choice of the functions $f_1,\ldots,f_{d_1}$, the coefficient of each $f_j$ in this expansion of $f$ has magnitude at most one. By the triangle inequality and scaling, then, it follows that
\begin{equation} | f(x)| \leq  \left( \sum_{j=1}^{d_1} |f_j(x)| \right) \int |f| d \mu \label{pointwise} \end{equation}
for any $f \in \mathcal F_1$ and any $x \in X$. Now for any $\tau > 0$, fix
\begin{equation} E_\tau := \set{ x \in X \setminus X_0 }{ \sum_{j=1}^{d_1} |f_j(x)| \leq  \tau d }. \label{setdef} \end{equation}
By Tchebyshev's inequality,
\[ \mu(X \setminus E_\tau) < \frac{1}{\tau d} \int \left( \sum_{j=1}^{d_1} |f_j(x)| \right) d \mu(x) \leq \frac{d_1}{\tau d} \leq \frac{1}{\tau}; \]
note in particular that the first inequality is strict because
$$ \tau d  \chi_{X \setminus E_\tau} (x) < \sum_{j=1}^{d_1} |f_j(x)|$$
for each $x \in X \setminus X_0$. Equality of the integrals over $X \setminus X_0$ would force equality of the two functions $\mu$-almost everywhere on $X \setminus X_0$, which would then force $\mu(X \setminus X_0) = 0$, meaning ultimately that $\mu = 0$ and $\mathcal F_1 = \{0\}$, which has already been handled.
Taking a supremum of the inequality \eqref{pointwise} over all $x \in E_\tau$ gives
\[ \sup_{x \in E_\tau} | f(x)| \leq \tau d \int |f| d \mu \]
for any $f \in \mathcal F_1$. Since every $f \in \mathcal F$ must equal $f_0 + f_1$ for some $f_0 \in \mathcal F_0$ and $f_1 \in \mathcal F_1$ and since $f_0$ is identically zero on the given $E_\tau$, the fact that \eqref{convex} holds for $f_1$ immediately implies that it holds for $f$ as well.
\end{proof}

Before applying this lemma to the proof of Theorem \ref{aseq}, a brief remark is in order.  Although the set $E_\tau$ given by \eqref{setdef} is only described as measurable, this is generally an understatement; if the functions of $\mathcal F$ are all continuous, then $E_\tau$ is closed; if every $f \in \mathcal F$ is a polynomial, the sets $E_\tau$ are semialgebraic since they take the form
\[ \set{ x \in X}{ \sum_{j=1}^{d_1} c_j f_j(x) \leq \tau d  \mbox{ and } \sum_{i=1}^\ell \tilde c_i h_i(x) = 0 \mbox{ for all } c_j,\tilde c_i \in \{-1,1\}} \]
for functions $f_j, h_i \in \mathcal F$ which in this case are polynomials of bounded degree.

\begin{proof}[Proof of Theorem \ref{aseq}]
The proof follows rather directly from Lemma \ref{c0lemma}. Without loss of generality, it may be assumed that $\mu$ is not the zero measure on $\Omega$, since in this case $||\mathcal A||_{\mu,s} = ||\mathcal S||_{\mu,s} = \infty$. In all other cases, $||\mathcal A||_{\mu,s}$ and $||\mathcal S||_{\mu,s}$ must be finite.  First observe that 
\[ [\mu(E)]^{-k} \mathcal A(E) \leq \mathcal S(E) \]
for any measurable set $E$ with nonzero $\mu$-measure since the integrand of $\mathcal A(E)$ is pointwise dominated by $\mathcal S(E)$ on $E^k$. Consequently, for any such $E$, 
\[ ||\mathcal A||_{\mu,s} [\mu(E)]^s \leq [\mu(E)]^{-k} \mathcal A(E) \leq \mathcal S(E) \]
which then implies that $|| \mathcal A||_{\mu,s} \leq ||\mathcal S||_{\mu,s}$. To prove the remaining inequality of \eqref{aseqeq}, one applies Lemma \ref{c0lemma} with the vector space $\mathcal F$ being real-valued polynomials of degree at most $\deg \Phi$. If $m > 1$, then an arbitrary and unspecified norm $|\cdot|$ has been fixed as well; let $K^*$ be the unique symmetric, compact, convex subset of $\R^m$ such that
\begin{equation} |v| = \sup_{\ell \in K^*} |\ell \cdot v| \label{mnorm} \end{equation}
for all $v \in \R^m$, where $\cdot$ is the usual dot product.
When the inequality \eqref{convex} is applied iteratively in conjunction with Fubini's Theorem, this establishes the chain of inequalities
\begin{align*}
 \int_{E^k} & |\Phi(x_1,\ldots,x_k)| d \mu(x_1) \cdots d \mu(x_k) \\
  & \geq \int_{E^k} |(\ell \cdot \Phi)(x_1,\ldots,x_k)| \chi_{K^*}(\ell) d \mu(x_1) \cdots d \mu(x_k) \\
 & \geq \int_{E^{k-1}} (C \tau)^{-1} |(\ell \cdot \Phi_(y_1,\ldots,x_k)| \chi_{K^*}(\ell) \chi_{E_\tau}(y_1) d \mu(x_2) \cdots d \mu(x_k)   \\
 & \geq \cdots \geq (C \tau)^{-k}  |(\ell \cdot \Phi)(y_1,\ldots,y_k)| \chi_{K^*}(\ell) \chi_{E_\tau}(y_1) \cdots \chi_{E_\tau}(y_k)
 \end{align*}
 for any $y_1,\ldots,y_k \in \Omega$, 
where $C$ is the dimension of $\mathcal F$, which depends only on $n$ and $\deg \Phi$. Taking a supremum over $\ell \in \R^m$ and $y_1,\ldots,y_k$ and assuming that \eqref{snorm} holds gives that
\begin{equation} 
\begin{split}
\mathcal A(E) & \geq (C \tau)^{-k} {\mathcal S}(E_\tau) \\ & \geq (C \tau)^{-k} \left[ \mu(E_\tau) \right]^s ||\mathcal S||_{\mu,s} \geq (C \tau)^{-k} \left[ \mu(E) - \tau^{-1} \right]^s ||\mathcal S||_{\mu,s}
\end{split}  \label{t0dom} \end{equation}
for any $\tau > 1/ \mu(E)$.  If $\mu(E) \in (0,\infty)$, fixing $\tau := 2 / \mu(E)$ gives that
\[ \mathcal A(E) \geq (2C)^{-k} \left[ \mu(E) \right]^k ||\mathcal S||_{\mu,s} [ \mu(E_\lambda) ]^s \geq 2^{-s} (2C)^{-k} ||\mathcal S||_{\mu,s} [ \mu(E) ]^{k+s}. \]
If $\mu(E) = 0$ or $\mu(E) = \infty$, then the inequality immediately above still holds since it is trivial when $\mu(E) = 0$ and since the right-hand side of \eqref{t0dom} is infinite for any positive $\tau$ when $\mu(E) = \infty$.
Therefore the inequality holds for all $E$, meaning that
\[ ||\mathcal A||_{\mu,s} \geq 2^{-(s+k)} C^{-k} ||\mathcal S||_{\mu,s}, \]
which completes the main assertion \eqref{aseqeq} of Theorem \ref{aseq}. In particular, the constant depends only on $(n,k,s,\deg \Phi)$ and not on $\mu$ or the norm on $\R^m$. 
\end{proof}

\subsection{Basic GMT inequalities and Frostman's Lemma}
\label{basicgmt}

In this section, the focus returns to Theorem \ref{bestmeasthm}. The goal for the moment is to lay out some basic geometric measure theory which underlies the analytic inequality \eqref{snorm}. To that end,
given a general polynomial $\Phi : \Omega^k \rightarrow \R^m$ vanishing to order $q \geq 1$ on the diagonal as the introduction, for any $\sigma > 0$ and any $E \subset \Omega$, let
\begin{align}
\mathcal H^\sigma_\Phi(E)  & := \lim_{\delta \rightarrow 0^+} \! \inf \set{ \sum_i \left[ \mathcal S(E_i) \right]^\sigma \! }{ \chi_E \leq \sum_i \chi_{E_i}, \ \mathrm{diam}(E_i) \leq \delta}, \label{hm} \\
 \lambda^\sigma_\Phi(E) & := \lim_{\delta \rightarrow 0^+} \! \inf \left\{  \sum_{i} c_i \left[ \mathcal S(E_i) \right]^\sigma  ~ \right| \nonumber \\ & \qquad \qquad \qquad \qquad \left. \vphantom{ \left[ \mathcal S(E_i) \right]^s } \chi_E \leq \sum_i c_i \chi_{E_i}, \ c_i \geq 0, \ \mathrm{diam}(E_i) \leq \delta \right\}. \label{weightedhm}
\end{align}
To be clear, one need not assume that the sets $E_i$ have any regularity, but there is no loss of generality in requiring that each $E_i$ be Borel or even closed since continuity of $\Phi$ implies that $\mathcal S$ assigns the same value to $E_i$ and its closure $\overline{E_i}$.
The quantity $\mathcal H^{\sigma}_\Phi$ will be called the $\Phi$-Hausdorff measure of dimension $\sigma$, and as already defined in Theorem \ref{bestmeasthm}, $\lambda^{\sigma}_\Phi$ is called the weighted $\Phi$-Hausdorff measure of dimension $\sigma$. Note that $\mathcal H^{\sigma}_\Phi$ is a special case of the Carath\'{e}odory construction (see Federer \cite{federer1969} and Mattila \cite{mattila}), while $\lambda^{\sigma}_\Phi$ generalizes the measure that Howroyd \cite{howroyd1995} calls the weighted Hausdorff measure.  Just as in the definition of the classical Hausdorff measure, the quantities \eqref{hm} and \eqref{weightedhm} both define metric outer measures on $\Omega$ and therefore restrict to well-defined measures on the Borel sets; see  Folland \cite[Proposition 11.6]{folland}. 

The most basic inequalities satisfied by these quantities are that
\begin{equation} ||\mathcal S||_{\mu,\frac{1}{\sigma}}^\sigma \mu(E) \leq \lambda^{\sigma}_\Phi(E) \leq \mathcal H^{\sigma}_{\Phi}(E) \label{basicineq} \end{equation}
for any Borel set $E$ and any nonnegative Borel measure $\mu$. The first inequality follows because
\[ 
\begin{split}
|| \mathcal S||_{\mu,\frac{1}{\sigma}}^\sigma \mu(E) & = || \mathcal S||_{\mu,\frac{1}{\sigma}}^\sigma \int \chi_E d \mu \leq || \mathcal S||_{\mu,\frac{1}{\sigma}}^\sigma \int \sum_i c_i \chi_{E_i} d \mu \\
& = \sum_{i=1}^\infty c_i || \mathcal S||_{\mu,\frac{1}{\sigma}}^\sigma \mu(E_i) \leq \sum_{i=1}^\infty c_i \left[ \mathcal S(E_i) \right]^\sigma.
\end{split} \]
The latter inequality of \eqref{basicineq} follows simply because the infimum \eqref{weightedhm} is taken over a strictly larger set than \eqref{hm}. It is natural to ask when the measures $\lambda^\sigma_\Phi$ and $\mathcal H^\sigma_\Phi$ are equal or comparable. For the classical Hausdorff measure equality is known (see Federer \cite{federer1969}), but for general measures this need not be the case. In the context of this present paper, the arguments of Section \ref{mainthmsec} will establish comparability in the range $\sigma \geq n/q$ (although both measures are trivial when the inequality is strict). Beyond this observation, the question of comparability of $\lambda^\sigma_\Phi$ and $\mathcal H^{\sigma}_\Phi$ in the regime $\sigma < n/q$ will for now remain unexplored.

The measure $\lambda^\sigma_\Phi$ holds fundamental significance in the study of nonconcentration inequalities because it characterizes, via a generalization of Frostman's Lemma, the existence of nontrivial measures $\mu$ satisfying such inequalities.
\begin{lemma}
 Fix any $\sigma > 0$. There exists a nontrivial positive Borel measure $\mu$ on the compact set $K \subset \Omega \subset \R^n$ satisfying \label{frostmanL}
\begin{equation} \mathcal S(E) \geq \left[ \mu(E) \right]^\frac{1}{\sigma} \label{frostman2} \end{equation}
for all Borel sets $E \subset K$ if and only if $\lambda^{\sigma}_\Phi(K) > 0$.
\end{lemma}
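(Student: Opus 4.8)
The plan is to prove the two implications separately. The ``only if'' direction (existence of $\mu$ forces $\lambda^\sigma_\Phi(K) > 0$) is immediate: a nontrivial Borel measure $\mu$ on $K$ satisfying \eqref{frostman2} satisfies \eqref{snorm} with $s = 1/\sigma$ and constant $1$, so $||\mathcal S||_{\mu,1/\sigma} \geq 1$, and the first inequality of \eqref{basicineq} applied with $E = K$ gives $\lambda^\sigma_\Phi(K) \geq ||\mathcal S||^\sigma_{\mu,1/\sigma}\, \mu(K) \geq \mu(K) > 0$. (Equivalently and directly: for any admissible weighted cover $\chi_K \leq \sum_i c_i \chi_{E_i}$ one has $\mu(K) \leq \sum_i c_i \mu(E_i) \leq \sum_i c_i [\mathcal S(E_i)]^\sigma$, and taking the infimum over covers with $\mathrm{diam}(E_i) \leq \delta$ and letting $\delta \to 0^+$ yields $\mu(K) \leq \lambda^\sigma_\Phi(K)$.)

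For the ``if'' direction I would first set up notation, writing $\lambda^\sigma_{\Phi,\delta}$ for the quantity inside the limit in \eqref{weightedhm}, so that $\lambda^\sigma_{\Phi,\delta}(K)$ increases as $\delta \downarrow 0$ with $\sup_{\delta > 0} \lambda^\sigma_{\Phi,\delta}(K) = \lambda^\sigma_\Phi(K)$. Two elementary observations are needed: for each fixed $\delta > 0$ one has $\lambda^\sigma_{\Phi,\delta}(K) < \infty$, since $K$ is bounded and $\Phi$ is a polynomial, so $K$ is covered by finitely many closed sets of diameter $\leq \delta$ each of finite $\mathcal S$; and there is no loss in restricting throughout to covers by closed sets, as $\mathcal S$ and $\mathrm{diam}$ are unchanged under closure. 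Hence, assuming $\lambda^\sigma_\Phi(K) > 0$, I may fix a scale $\delta_0 > 0$ with $0 < \lambda^\sigma_{\Phi,\delta_0}(K) < \infty$ and work at that scale.

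The crux is the generalized Frostman duality at the scale $\delta_0$:
\[ \lambda^\sigma_{\Phi,\delta_0}(K) \ = \ \sup \set{\nu(K)}{\nu \text{ a nonnegative Borel measure on } K,\ \nu(B) \leq [\mathcal S(B)]^\sigma \text{ for all closed } B \text{ with } \mathrm{diam}(B) \leq \delta_0}. \]
The inequality ``$\geq$'' is the elementary pairing used above; the reverse inequality is exactly Howroyd's lemma \cite{howroyd1995} (see also \cite{mattila}), with the gauge $[\mathrm{diam}(\cdot)]^\sigma$ replaced by $[\mathcal S(\cdot)]^\sigma$. I expect this to be the main obstacle, but I also expect Howroyd's Hahn--Banach / minimax argument on $C(K)$ to transfer essentially verbatim, since the only properties of the gauge it uses are that $[\mathcal S(\cdot)]^\sigma$ is monotone under inclusion, invariant under closure, and continuous on the space of closed subsets of $K$ in the Hausdorff metric --- all valid because $\Phi$ is continuous. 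My plan is to record these three properties of $\mathcal S$ explicitly and then either cite or reproduce the short duality argument.

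Granting the duality, I would finish as follows. Choose $\mu_0$ in the feasible set above with $\mu_0(K) \geq \tfrac12 \lambda^\sigma_{\Phi,\delta_0}(K) > 0$; its defining constraint extends from closed to arbitrary Borel $A$ with $\mathrm{diam}(A) \leq \delta_0$ via $\mu_0(A) \leq \mu_0(\overline A) \leq [\mathcal S(\overline A)]^\sigma = [\mathcal S(A)]^\sigma$. Since $K$ is bounded it is covered by some number $N = N(n, \mathrm{diam}(K)/\delta_0)$ of closed cubes $Q_1, \ldots, Q_N$ of diameter at most $\delta_0$, so for every Borel $B \subset K$,
\[ \mu_0(B) \ \leq \ \sum_{i=1}^N \mu_0(B \cap Q_i) \ \leq \ \sum_{i=1}^N [\mathcal S(B \cap Q_i)]^\sigma \ \leq \ N\, [\mathcal S(B)]^\sigma, \]
using $B \cap Q_i \subset B$ and the monotonicity of $\mathcal S$. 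Then $\mu := N^{-1} \mu_0$ satisfies $\mathcal S(B) \geq [\mu(B)]^{1/\sigma}$ for every Borel $B \subset K$, while $\mu(K) = N^{-1} \mu_0(K) > 0$; thus $\mu$ is the required nontrivial measure, which completes the proof.
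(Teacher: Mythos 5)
Your proof is correct and follows the same essential route as the paper's: both hinge on Howroyd's Hahn--Banach/Riesz argument, applied with the gauge $[\mathcal S(\cdot)]^\sigma$ in place of $[\mathrm{diam}(\cdot)]^\sigma$, and you have correctly identified the properties of $\mathcal S$ (monotonicity under inclusion, invariance under closure, and one-sided continuity along shrinking closed neighborhoods) that make Howroyd's argument transfer; the paper reproduces that argument in full rather than citing it. The one genuine, if minor, difference is the last step of upgrading the Frostman bound from small-diameter sets to arbitrary Borel sets: you cover $K$ by a finite number $N$ of cubes of diameter at most $\delta_0$ and rescale $\mu_0$ by $1/N$, whereas the paper instead restricts $\mu_0$ to a single dyadic box $B$ of diameter less than $\delta$ with $\mu_0(B) > 0$, so that for any Borel $E$ one has $\mu_0(E \cap B) \le [\mathcal S(E\cap B)]^\sigma \le [\mathcal S(E)]^\sigma$ without any rescaling. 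Both versions are fine; the paper's keeps the constant exactly $1$ with no dependence on $\mathrm{diam}(K)/\delta_0$, while yours is marginally more quantitative about how much total mass you retain.
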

\begin{proof}
The proof follows  Howroyd's proof \cite{howroyd1995} of Frostman's Lemma as given by Mattila \cite[Theorem 8.17]{mattila}.
By \eqref{basicineq}, the existence of nontrivial $\mu$ automatically guarantees that $\lambda_\Phi^\sigma(K) > 0$. Conversely, for any function $f$ on $K$, let
\[ p_{\sigma,\delta}(f) := \inf \set{ \sum_{i} c_i \left[ \mathcal S(E_i) \right]^\sigma}{ f \leq \sum_i c_i \chi_{E_i}, c_i > 0, \ \mathrm{diam}(E_i) \leq \delta}. \]
For any continuous functions $f,g$ on $K$, it is elementary to check that
\begin{align*}
p_{\sigma,\delta}(t f) & = t p_{\sigma,\delta} (f), \mbox{ for all } t \in [0,\infty), \\
p_{\sigma,\delta}(f+g) & \leq p_{\sigma,\delta}(f) + p_{\sigma,\delta}(g).
\end{align*}
It is also true that $p_{\sigma,\delta}(g) = 0$ for every nonpositive function $g$. Thus
\[ t p_{\sigma,\delta}(\chi_K) \leq p_{\sigma,\delta} (t \chi_K) \mbox{ for all } t \in \R. \]
Consequently by the Hahn-Banach Theorem, there must exist a linear functional $L$ defined on the space $C^0(K)$ of continuous functions on $K$ such that $L(\chi_K) = p_{\sigma,\delta}(\chi_K)$ and $L(f) \leq p_{\sigma,\delta}(f)$ for any continuous function $f$. If $f$ is nonnegative, $0 = - p_{\sigma,\delta}(-f) \leq L(f)$ as well, so $L$ is a positive linear functional on $C^0(K)$. By the Riesz Representation Theorem, there must be a nonnegative Borel measure $\mu_0$ on $K$ such that
\[ L(f) = \int f d \mu_0 ~ \forall f \in C^0(K) \ \mbox{ and } \ \mu_0(K) = L(\chi_K) = p_{\sigma,\delta}(\chi_K). \]
Now if $E$ is any Borel set with diameter smaller than $\delta$, let $f_j$ be a sequence of functions in $C^0(K)$ which are identically $1$ on a neighborhood of $E$, bounded above by one everywhere, and vanish outside the set $E_j$ of points distance at most $1/j$ from $E$. Then
\begin{align*}
\mu_0(E) & \leq \liminf_{j \rightarrow \infty} \int f_j d \mu_0 = \liminf_{j \rightarrow \infty} L(f_j) \leq \liminf_{j \rightarrow \infty} p_{\sigma,\delta}(f_j) \\ & \leq \liminf_{j \rightarrow \infty} [{\mathcal S} (E_j)]^\sigma = [{\mathcal S}(E) ]^\sigma,
\end{align*}
where the last inequality follows because $\Phi$ is a polynomial and therefore continuous. Finally, if $\lambda_\Phi^\sigma(K) > 0$, then there must be some positive $\delta$ such that $p_{\sigma,\delta}(\chi_K) > 0$. For this fixed value of $\delta$, $\mu_0$ must be nonzero. By subdividing $\R^n$ into nonoverlapping boxes, there must be a dyadic box $B$ of diameter less than $\delta$ such that $\mu_0(B) > 0$. Now define the measure $\mu$ by $\mu(E) := \mu_0(E \cap B)$. It follows that $\mu(\Omega) = \mu_0(B) > 0$ and for any Borel set $E \subset \Omega$ of any diameter,
\[ \mu(E) = \mu_0(E \cap B) \leq \left[ {\mathcal S}(E \cap B) \right]^\sigma \leq \left[ {\mathcal S}(E) \right]^{\sigma} \]
as desired.
\end{proof}
It is worthwhile to explicitly connect Lemma \ref{frostmanL} to D.~Oberlin's affine measure and affine curvature condition \eqref{oberlin2}.
It was observed by D.~Oberlin \cite{oberlin2003} and others that any measure $\mu$ on $\R^n$ satisfying either a nontrivial Fourier restriction inequality or $L^p$-improving convolution inequality must satisfy the inequality
\begin{equation} \mu(R) \lesssim |R|^\sigma \label{oberlin} \end{equation}
for some $\sigma > 0$ as $R$ ranges over all boxes in $\R^d$ of arbitrary orientations, i.e., all sets of points which may be expressed as products of finite intervals with respect to some orthogonal coordinates on $\R^n$. In analogy with Oberlin's affine measure\footnote{Note that Oberlin adjusts the exponent $\sigma$ so that the affine dimension of $\R^n$ is $n$, but by the present convention, the dimension is always $1$.}, let \begin{align*}
{\mathcal A}^\sigma_w(E) & := \lim_{\delta \rightarrow 0^+} \inf \left\{ \sum_{j} c_j |R_j|^\sigma \ \right| \chi_E \leq \sum_j c_j \chi_{R_j},  \\
& \qquad \qquad  \qquad \ \ \left. \vphantom{\sum_j |R_j|^{\frac{\sigma}{n}}}  \ c_j > 0, \ R_j \mbox{ are boxes of diameter} \leq \delta \right\}
\end{align*}
be called the $\sigma$-dimensional weighted affine Hausdorff measure. This weighted affine Hausdorff measure is trivially dominated by Oberlin's affine measure of dimension $n \sigma$. In this setting, Lemma \ref{frostmanL} has the following consequences:
\begin{corollary}
Suppose $K \subset \R^n$ is compact and fix any $\sigma > 0$. Then $K$ admits a nontrivial positive Borel measure $\mu$ satisfying the Oberlin affine curvature condition \eqref{oberlin}
 if and only if the $\sigma$-dimensional weighted affine Hausdorff measure of $K$ is nonzero. In particular, if ${\mathcal A}^{\sigma}_w(K) = 0$ implies that for any exponents $p_1,p_2,r_1,r_2 \in [1,\infty]$ satisfying
\[ \sigma = \frac{1}{p_1} - \frac{1}{p_2} = \frac{r_2}{r_1'} =: r_2 \left[ 1 - \frac{1}{r_1} \right], \] 
neither of the inequalities
\begin{equation*}
|| \mu * f||_{L^{p_2}(\R^n)} \lesssim ||f ||_{L^{p_1}(\R^n)} \qquad \mbox{ or } \qquad 
|| \widehat f ||_{L^{r_2} (\mu)} \lesssim ||f ||_{L^{r_1}(\R^n)}
\end{equation*}
(where $\widehat f$ denotes the Fourier transform) hold uniformly in $f$ for any nontrivial positive Borel measure $\mu$ supported on $K$.
\end{corollary}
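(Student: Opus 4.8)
The plan is to derive the Corollary directly from Lemma \ref{frostmanL} together with the well-known implication, due to D.~Oberlin \cite{oberlin2003}, that any measure supporting a nontrivial Fourier restriction or $L^p$-improving convolution inequality must satisfy an affine curvature condition of the form \eqref{oberlin}. First I would take $\Phi$ to be the ``affine'' building block whose nonconcentration functional $\mathcal S$ recovers the weighted affine Hausdorff measure: concretely, the claim in Section \ref{basicgmt} that \eqref{oberlin} is equivalent to the \emph{a priori} stronger inequality \eqref{snorm} must be invoked here, so that $\lambda^\sigma_\Phi = \mathcal A^\sigma_w$ for the relevant $\Phi$ and so that ``$\mu$ satisfies \eqref{oberlin}'' and ``$\mu$ satisfies \eqref{snorm} with $s = 1/\sigma$'' coincide up to the implicit multiplicative constants. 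Granting this identification, the biconditional ``$K$ admits a nontrivial positive Borel measure satisfying \eqref{oberlin} $\iff$ $\mathcal A^\sigma_w(K) \neq 0$'' is exactly the statement of Lemma \ref{frostmanL} transported through that equivalence.

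Next I would handle the ``in particular'' clause. Suppose $\mathcal A^\sigma_w(K) = 0$; by the biconditional just established, $K$ admits no nontrivial positive Borel measure satisfying \eqref{oberlin}. Now fix exponents $p_1,p_2,r_1,r_2 \in [1,\infty]$ with $\sigma = 1/p_1 - 1/p_2 = r_2/r_1'$ and let $\mu$ be any nontrivial positive Borel measure supported on $K$. The point is that each of the two displayed inequalities forces $\mu$ to obey \eqref{oberlin} with the specified $\sigma$: for the convolution inequality one tests against indicators of (thin) boxes $R$, using that $\chi_R * \chi_{R}$ is comparable to $|R|$ on a translate of $R$, to extract $\mu(R) \lesssim |R|^{1/p_1 - 1/p_2} = |R|^\sigma$; for the restriction inequality one tests against a modulated bump adapted to the dual box $R^*$ and uses Plancherel/stationary-phase-free elementary estimates to get the same bound with the exponent $r_2/r_1' = \sigma$. (These are precisely Oberlin's observations from \cite{oberlin2003}, which I would cite rather than reprove.) Hence a nontrivial such $\mu$ would contradict $\mathcal A^\sigma_w(K) = 0$, so neither inequality can hold uniformly in $f$ for any nontrivial positive Borel $\mu$ supported on $K$.

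The main obstacle, and the step requiring the most care, is the clean bookkeeping of the exponent $\sigma$ across the three formulations: Lemma \ref{frostmanL} is phrased with the exponent $1/\sigma$ on $\mathcal S(E) \geq [\mu(E)]^{1/\sigma}$, Oberlin's condition \eqref{oberlin} carries $\sigma$ on $|R|^\sigma$, and the convolution/restriction exponents satisfy the Young-type relations $\sigma = 1/p_1 - 1/p_2 = r_2[1 - 1/r_1]$; one must confirm that the ``$\sigma$'' appearing in $\mathcal A^\sigma_w$ is the one matching these Lebesgue-exponent gaps, with no stray factor of $n$ (the footnote about Oberlin's normalization convention is exactly flagging this potential discrepancy). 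Once that alignment is checked, the remaining pieces are the reduction via Section \ref{basicgmt} that \eqref{oberlin} $\Leftrightarrow$ \eqref{snorm} and a direct appeal to Lemma \ref{frostmanL}; everything else is a citation to \cite{oberlin2003}. I expect the proof to be short, with its entire content being the translation dictionary between the geometric-measure-theoretic side and the harmonic-analytic side.
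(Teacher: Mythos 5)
Your high-level framework is the same as the paper's (Lemma \ref{frostmanL}, the equivalence of \eqref{oberlin} with \eqref{snorm}, and a citation to Oberlin for the restriction/convolution consequences), but there is a genuine gap: you treat the equivalence of \eqref{oberlin} with \eqref{frostman2} and the comparability $\mathcal{A}^\sigma_w \approx \lambda^\sigma_\Phi$ as facts to be \emph{invoked}, when in fact they are the actual content of this corollary's proof. The forward pointer ``(see Section \ref{basicgmt})'' in Example 3 refers to precisely the argument you are being asked to supply here; nothing earlier in the paper proves it. The substance the paper provides, and which your proposal omits entirely, is a single geometric comparison: for every bounded Borel set $E \subset \R^n$ there is a box $R$ with $E \subset R$ and $|R| \approx \mathcal{S}(E) = \sup_{x_1,\ldots,x_{n+1}\in E}\left|\det(x_1 - x_{n+1},\ldots,x_n - x_{n+1})\right|$. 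The paper proves this by passing to a maximizing ensemble $x_1,\ldots,x_{n+1}$ for $|\Phi|$ on $(\overline{E})^{n+1}$, using the Cramer's rule argument of Lemma \ref{c0lemma} to show every $x\in E$ lies in the parallelepiped $x_{n+1} + \sum_{j=1}^n c_j(x_j - x_{n+1})$ with $c_j \in [-1,1]$ (of Lebesgue measure $2^n \mathcal{S}(E)$), and then applying the John Ellipsoid Theorem to upgrade this parallelepiped to an axis-box $R$ of comparable volume. Both desired equivalences then fall out of this one estimate: $\mu(E) \le \mu(R) \lesssim |R|^\sigma \lesssim [\mathcal{S}(E)]^\sigma$ shows \eqref{oberlin} implies \eqref{frostman2}, and replacing general covers by covers by boxes gives $\mathcal{A}^\sigma_w \approx \lambda^\sigma_\Phi$.

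You also misidentify the main obstacle. You flag the exponent bookkeeping ($\sigma$ versus Oberlin's $n$-rescaled normalization) as the delicate step; that is a routine check and the paper's footnote already settles it. The delicate step is the John Ellipsoid comparison. Your discussion of the ``in particular'' clause and the citation to \cite{oberlin2003} for the necessity of \eqref{oberlin} matches the paper and is fine, but that part is deliberately citation-only in both treatments; it does not carry the weight of the proof.
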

\begin{proof}
Using Oberlin's earlier calculations \cite[Proposition 2]{oberlin2003}, it suffices to set $\Phi(x_1,\ldots,x_{n+1}) := \det (x_1 - x_{n+1},\ldots,x_{n} - x_{n+1})$ as noted in the introduction and show that the Oberlin affine curvature condition \eqref{oberlin} is equivalent to \eqref{frostman2} modulo constants and that $\mathcal A^{\sigma}_w \approx \lambda^{\sigma}_{\Phi}$. Both facts are quickly established by showing that for any bounded Borel set $E \subset \R^n$ there is a box $R$ such that $E \subset R$ and
\[ |R| \approx  \sup_{x_1,\ldots,x_{n+1} \in E} |\Phi(x_1,\ldots,x_{n+1})| \]
with implicit constants depending only on dimension. Because taking the closure of $E$ does not change the supremum, it may be assumed without loss of generality that $E$ is compact and one may fix an ensemble $x_1,\ldots,x_{n+1}$ which achieves the supremum of $|\Phi|$ on $E^{n+1}$. If the supremum is zero, then necessarily the span of all vectors $x - x_{n+1}$ as $x$ ranges over $E$ must have dimension strictly less than $n$, which implies that $E$ lies in an affine hyperplane. By boundedness of $E$, this implies that $E$ is contained in a (degenerate) box $R$ of volume zero. Otherwise the supremum is strictly positive, and by the same argument appearing in the proof of Lemma \ref{c0lemma}, it must be the case for any $x \in E$ that
\[ x = x_{n+1} + \sum_{j=1}^n c_j (x_j - x_{n+1}) \]
for constants $c_j \in [-1,1]$. The set of all such points having such an expansion is an affine image of the box $[-1,1]^n$ and consequently has Lebesgue measure $2^n |\det (x_1 - x_{n+1},\ldots,x_{n} - x_{n+1})| = 2^n \mathcal S(E)$. By the John Ellipsoid Theorem, this same set of points must be contained in an ellipsoid of comparable volume, and that ellipsoid must trivially be contained in a box $R$ of comparable volume. Thus $E \subset R$ and $|R| \lesssim \mathcal S(E)$ as promised.

Using this conclusion, if \eqref{oberlin} is assumed to hold, then for any bounded Borel set $E$,
\[ \mu(E) \leq \mu(R) \lesssim |R|^\sigma \lesssim \left[ \mathcal S(E) \right]^\sigma. \]
If $E$ is unbounded, we may write $E$ as the union of an increasing family $E_j$ of bounded Borel sets and then observe that
\[ \mu(E) = \limsup_{j \rightarrow \infty} \mu(E_j) \lesssim \limsup_{j \rightarrow \infty}  \left[ \mathcal S (E_j) \right]^\sigma \lesssim  \left[ \mathcal S(E_j) \right]^\sigma. \]
Likewise it must clearly be the case that $\lambda_\Phi^\sigma \lesssim \mathcal A_w^\sigma$ since 
$|R| \approx  \mathcal S(R)$ and since $\mathcal A_w^\sigma$ involves an infimum over a smaller class. However, for any bounded Borel sets $E_i$ such that $\sum_j c_j \chi_{E_j} \geq \chi_E$ for positive $c_j$'s, it is also true that $\sum_j c_j \chi_{R_j} \geq \chi_{E}$ for the distinguished rectangles $R_j$ containing each $E_j$. Moreover,
\[ \sum_{j} c_j  |R_j|^\sigma \lesssim \sum_j c_j \left[ \mathcal S(E_j) \right]^\sigma \]
which implies that $\mathcal A_w^\sigma \approx \lambda^\sigma_\Phi$. The corollary now follows from Lemma \ref{frostmanL}.
\end{proof}

\section{Proof of Theorem \ref{bestmeasthm}}
\label{mainthmsec}
The most difficult case of Theorem \ref{bestmeasthm} to establish is the case $\sigma = n/q$. After the cases $\sigma < n/q$ and $\sigma > n/q$ are settled (the former using Lemma \ref{frostmanL} and the latter using what amounts to a scaling argument), the proof of Theorem \ref{bestmeasthm} is reduced to the related Theorem \ref{betterthm} and ultimately to Lemma \ref{multisystem}. 

\subsection{The case $\sigma < n/q$}

The proof of Theorem \ref{bestmeasthm} in the case $\sigma < n/q$ is an almost immediate consequence of Lemma \ref{frostmanL}. First, supposing that there is a Borel measure $\mu$ satisfying \eqref{snorm} nontrivially with $s = 1/\sigma$, then $\lambda^\sigma_\Phi(\Omega) > 0$ by virtue of \eqref{basicineq} applied to the set $\Omega$ directly.

On the other hand, if $\lambda_\Phi^\sigma(\Omega) > 0$, then because $\Omega$ is an open subset of $\R^n$, it may be written as a countable increasing union of compact sets. By the Monotone Convergence Theorem, at least one of these compact subsets $K$ must have $\lambda_\Phi^\sigma(K) > 0$ as well. By Lemma \ref{frostmanL}, $K$ must admit a measure $\mu$ satisfying \eqref{snorm} nontrivially on $K$; extending $\mu$ to be zero on the complement of $K$ gives a measure $\mu$ on $\Omega$ which satisfies \eqref{snorm} nontrivially as well. In fact, it is worth noting that this argument works for any value of $\sigma$. Consequently for any $s > 0$, $\mathcal S$ admits a Borel measure satisfying \eqref{snorm} nontrivially if and only if $\lambda^{1/s}_\Phi(\Omega) > 0$. The reason for the restriction, as will be seen momentarily, is simply that $\lambda^{\sigma}_\Phi(\Omega) = 0$ if $\sigma > n/q$.

\subsection{The case $\sigma \geq n/q$: Comparison to Lebesgue measure}

The goal of this section is to establish that $\mathcal H^{\sigma}_\Phi$ must vanish when $\sigma > n/q$ and to further show when $\sigma = n/q$ that $\mathcal H^{\sigma}_\Phi$ must be absolutely continuous with respect to Lebesgue measure with an upper bound on the corresponding Radon-Nykodym derivative.
Fix standard coordinates on $\Omega \subset \R^n$. Let $\partial$ denote the $n$-tuple of partial derivatives $(\partial_{1},\ldots,\partial_{n})$ in the coordinate directions. Furthermore, for any $T \in \GL(n,\R)$, $T^* \partial$ will denote the $n$-tuple 
\[ T^* \partial := \left( \sum_{j=1}^n T_{j1} \partial_{j},\ldots, \sum_{j=1}^n T_{jn} \partial_{j} \right). \]

Assuming that $\Phi : \Omega^k \rightarrow \R^m$ is any smooth function which vanishes to order at least $q$ at every point $(x,\ldots,x) \in \Omega^k$ for every $x \in \Omega$, the main inequality to be proved in this section is that for almost every $x \in \Omega$
\begin{equation}
\begin{split} 
\frac{d \mathcal H^{\frac{n}{q}}_\Phi}{dx}& (x) \lesssim  \\  & \inf_{T \in \GL(n,\R)}  \max_{|\alpha_1| + \cdots + |\alpha_k| = q} \frac{ \left|  (T^* \partial)^{\alpha_1}_1 \cdots (T^* \partial)^{\alpha_k}_k \Phi (x,\ldots,x) \right|^{\frac{n}{q}}}{|\det T|} 
\end{split} \label{density}
\end{equation}
where $\alpha_1,\ldots,\alpha_k$ are multiindices and the subscript $j$ in $(T^* \partial)^{\alpha}_j$ indicates that the partial derivatives are applied to the argument $x_j$ of $\Phi$. The implicit constant in \eqref{density} will depend only on $k,n$, and $q$.

To begin this calculation, fix $\delta \in (0,\infty)$ and $T \in \GL(n,\R)$, and suppose that $u_1,\ldots,u_k \in [-1,1]^n$ and that $K \geq 1$ is a positive integer.  It must be the case by Taylor's Theorem that
\begin{equation}
\begin{split}
\Phi  (x' & + K^{-1} \delta T  u_1  ,\ldots, x' + K^{-1} \delta T u_k) \\ = & \  \label{ts01} K^{-q} \delta^q \sum_{|\alpha_1| + \cdots + |  \alpha_k| = q} \frac{ (T^* \partial)^{\alpha_1}_1 \cdots (T^* \partial)^{\alpha_k}_k \Phi (x',\ldots,x')}{\alpha_1! \cdots \alpha_k!} u_1^{\alpha_1} \cdots u_k^{\alpha_k} \\  & + O(K^{-q-1} \delta^{q+1})
\end{split} 
\end{equation}
for any $x'$ belonging to any fixed compact subset of $\Omega \subset \R^n$. Since $\Phi$ is smooth, the error term $O(K^{-q-1} \delta^{q+1})$ is uniform as $x$ ranges over any compact set and as $u_1,\ldots,u_k$ vary inside the box $B := [-1,1]^n$. In particular, if $x$ is any fixed point in $\Omega$ and $x' \in x + \delta T B$, then by a second application of Taylor's Theorem to the main term on the right-hand side of \eqref{ts01}, it follows that
\begin{equation}
\begin{split}
\sup_{u_1,\ldots,u_k \in B} |\Phi  (x' & + K^{-1} \delta T  u_1  ,\ldots, x' + K^{-1} \delta T u_k)| \\ \lesssim & \  \label{ts02} K^{-q} \delta^q \max_{|\alpha_1| + \cdots + |  \alpha_k| = q} |(T^* \partial)^{\alpha_1}_1 \cdots (T^* \partial)^{\alpha_k}_k \Phi (x,\ldots,x)| \\  & + O(K^{-q} \delta^{q+1}) + O(K^{-q-1} \delta^{q+1})
\end{split} 
\end{equation}
for small $\delta$ and large $K$, with an implicit constant depending only on $q,k$, and $n$, in contrast with the error terms, which may also depend on $x$, $T$, etc. From this inequality, it follows that if $C := \{C_1,\ldots,C_{K^n}\}$ is the covering of $x + \delta T B$ by the collection of $K^n$ boxes induced by subdividing $B$ into $K$ equal parts along each axis, then
\[
\begin{split}
 \sum_{i=1}^{K^n} & \sup_{y_1,\ldots,y_k \in C_i} |\Phi(y_1,\ldots,y_k)|^{\sigma} \\
 & \lesssim K^{n-q \sigma} \delta^{q \sigma} \max_{|\alpha_1| + \cdots + |  \alpha_k| = q} |(T^* \partial)^{\alpha_1}_1 \cdots (T^* \partial)^{\alpha_k}_k \Phi (x,\ldots,x)|^\frac{n}{q} \\ & \qquad + O(K^{n-q \sigma} \delta^{\sigma(q+1)}) + O(K^{n - (q+1) \sigma} \delta^{(q+1) \sigma}).
 \end{split}
  \]
As $K \rightarrow \infty$, the diameters of all sets in the covering $C$ go to zero, so taking this limit implies that
\begin{equation} \mathcal H^{\sigma}_\Phi(x + \delta T B) = 0 \mbox{ when } \sigma > \frac{n}{q} \label{largesig} \end{equation}
and that
\[ \mathcal H^{\frac{n}{q}}_\Phi(x + \delta T B) \lesssim \delta^n \! \! \! \max_{|\alpha_1| + \cdots + |  \alpha_k| = q} |(T^* \partial)^{\alpha_1}_1 \cdots (T^* \partial)^{\alpha_k}_k \Phi (x,\ldots,x)|^\frac{n}{q} + O(\delta^{\frac{n(q+1)}{q}}), \]
where just as on previous lines, the implicit constant depends only on $q,k$, and $n$.  When $\sigma > n/q$, the equality \eqref{largesig} forces $\mathcal H^{\sigma}_\Phi(\Omega) = 0$ since $\Omega$ is contained in a countable union of boxes $x + \delta T B$ with centers $x \in \Omega$. By \eqref{basicineq}, this forces $\lambda^{\sigma}_\Phi(\Omega) = 0$ as well and rules out the existence of any nontrivial Borel measure satisfying a nonconcentration inequality when $s = 1/\sigma$.

It now suffices to assume $\sigma = n/q$.
For any $x$ in a compact subset of $\Omega$ and any sufficiently small $\delta$, it has been established that
\begin{equation} 
\begin{split}
\mathcal H^{\frac{n}{q}}_\Phi & (x + \delta T B) \\
&  \label{rn0} \lesssim |x + \delta T B| \! \! \! \max_{|\alpha_1| + \cdots + |  \alpha_k| = q} \frac{|(T^* \partial)^{\alpha_1}_1 \cdots (T^* \partial)^{\alpha_k}_k \Phi (x,\ldots,x)|^\frac{n}{q}}{|\det T|} \\ & \qquad + O(\delta^{\frac{n(q+1)}{q}}) \end{split}
\end{equation}
with implicit constant depending only on $q,k$, and $n$. To reiterate: the restriction of $x$ to a compact set influences the {\it a priori} size of the error term but not the implicit constant of \eqref{rn0}. Because the maximum over $\alpha_1,\ldots,\alpha_k$ is a locally bounded function of $x$ and because $\delta^{n(q+1)/q} / |x + \delta T B| \rightarrow 0$ as $\delta \rightarrow 0^+$, it follows that for all sufficiently small $\delta$ and all $x$ in any compact set, there is a constant $C$ (depending on the compact set and the transformation $T$ as well as on $q,k$, and $n$) such that $\mathcal H^{n/q}_\Phi(x + \delta T B) \leq C |x + \delta T B|$. 
This inequality forces $\mathcal H_\Phi^{n/q}$ to be locally absolutely continuous with respect to Lebesgue measure
since any set of Lebesgue measure zero can be covered by a countable union of boxes of this form whose measures sum to any prescribed small value. Now because $\mathcal H^{n/q}_\Phi$ is known to be absolutely continuous with respect to Lebesgue measure, the Radon-Nykodym derivative can be estimated pointwise almost everywhere by dividing both sides of \eqref{rn0} by $|x + \delta T B|$ and letting $\delta \rightarrow 0^+$.
The result is that for almost every $x \in \Omega$,
\[ \frac{d \mathcal H_\Phi^{\frac{n}{q}}}{d x} \lesssim \max_{|\alpha_1| + \cdots + |  \alpha_k| = q} \frac{|(T^* \partial)^{\alpha_1}_1 \cdots (T^* \partial)^{\alpha_k}_k \Phi (x,\ldots,x)|^\frac{n}{q}}{|\det T|}. \]
 Because the inequality is true uniformly in $T$, one can take an infimum of the right-hand side over a countable dense subset of $\GL(n,\R)$ to conclude that
\begin{equation}  \frac{d \mathcal H_\Phi^{\frac{n}{q}}}{d x} \lesssim \inf_{T \in \GL(n,\R)} \max_{|\alpha_1| + \cdots + |  \alpha_k| = q} \frac{|(T^* \partial)^{\alpha_1}_1 \cdots (T^* \partial)^{\alpha_k}_k \Phi (x,\ldots,x)|^\frac{n}{q}}{|\det T|} \label{hausdens} \end{equation}
with some implicit constant depending only on $q,k$, and $n$. This is exactly the asserted inequality \eqref{density}.

It is worth observing that by homogeneity and scaling (and permuting the order of the standard coordinates), it suffices to take the infimum in $T$ over the group $\SL(n,\R)$ rather than $\GL(n,\R)$.  It should also be mentioned that since the coordinate system used to derive \eqref{density} was essentially arbitrary, one could strengthen \eqref{density} {\it a priori} even further by taking an infimum on the right-hand side over all coordinate systems. However, this apparent strengthening of \eqref{density} is not an actual improvement in this case: since all lower-order derivatives vanish, it turns out that replacing the standard coordinate partial derivatives with partial derivatives in new coordinates leaves the value of the right-hand side of \eqref{density} unchanged. This coordinate independence will be a key point in the final stages of the proof of Theorem \ref{bestmeasthm}.

\subsection{Multisystems and Theorem \ref{bestmeasthm} with $\sigma = n/q$}

The inequalities \eqref{basicineq} and \eqref{density} just proved establish that for a given $\Phi$, any measure $\mu$ satisfying \eqref{snorm} with $s = q/n$ must be absolutely continuous with respect to Lebesgue measure and must have a Radon-Nykodym derivative controlled (up to an implicit constant) by $||\mathcal S||_{q/n}^{-n/q}$ times the expression on the right-hand side of \eqref{density}.  The purpose of this section is to introduce some additional ideas which will be used to show that the upper bound given by \eqref{density} can be used to {\it define} a measure which also satisfies \eqref{snorm}.  To prove this fact, it turns out to be necessary to work with a slightly more elaborate expression and then to show that this new, more complicated expression happens to be comparable to the the right-hand side of \eqref{density}.

The added complexity which is required is to replace the standard coordinate derivatives $\partial^\alpha$  by a broader family of differential operators which includes coordinate partial derivatives in all smooth coordinates as well as some slightly more general operators. The new object under consideration will be called a multisystem.
A multisystem $\mso$ on an open set $U$ is  a collection of smooth vector fields $Y^{(i)}_j$, $i=1,\ldots,N$, $j=1,\ldots,n,$ where for each fixed $i$,  $\{Y^{(i)}_j\}_{j=1,\ldots,n}$ commute and are linearly independent at every point in $U$. The integer $N$ will be called the size of $\mso$, and the class of all multisystems of size $N$ will be denoted $\MS^{(N)}$. For any finite sequence of the form $\alpha : \{1,\ldots, a\} \rightarrow \{1,\ldots,n\}$ with $a \leq N$ and any $n$-tuple of vectors $X_1,\ldots,X_n$ at the point $p$, let
\[  (X \ms)^{\alpha} := Z^{(a)}_{\alpha_a} \cdots Z^{(1)}_{\alpha_1},  \]
where $Z^{(i)}_{\ell}$ is the unique constant-coefficient linear combination of $Y^{(i)}_1, \ldots,Y^{(i)}_n$ which equals $X_\ell$ at the point $p$. Such $\alpha$ will be called ordered multiindices in $n$ variables and $|\alpha|$ will be used to denote the order of differentiation of $(X \ms)^\alpha$, which equals the cardinality of the domain of $\alpha$. As in the previous section, $T \in \GL(n,\R)$ will also act on these differential operators by defining
\[ (T^* X)_i := \sum_{j=1}^n T_{ji} X_j \]
and taking $(T^* X \ms)^{\alpha} := ((T^* X) \ms)^{\alpha}$.

Since the remainder of this paper deals with measures on $\R^n$ which are absolutely continuous with respect to Lebesgue measure, it will be convenient to switch back and forth between analytic and geometric descriptions of these measures. In particular, every measure $\mu$ will be identified with a density $\mu(X_1,\ldots,X_n)$ which acts on $n$-tuples of vectors at the point $x$ (for $\mu$-a.e. $x \in \Omega$) by means of the correspondence
\begin{equation} \mu(X_1,\ldots,X_n) = \left| \frac{d \mu}{dx} \right| |\det (X_1,\ldots,X_n)|, \label{densitydef} \end{equation}
where the determinant is of the $n \times n$ matrix whose columns are the coefficients of the vectors $X_i$ in the standard basis. With all notation in place, it is now possible to state the main existence result for nonconcentration inequalities:
\begin{theorem}
For any $s > 0$, let $\mu$ be the density on $\Omega$ which at the point $x$ is given by \label{betterthm}
\begin{equation}
\begin{split}
 \ \mu  (X_1,&  \ldots,X_n)   := \\ &  \mathop{\inf_{\mso \in \MS^{(N)}}}_{T \in \GL(n,\R)} \max_{|\alpha_1|,\ldots,|\alpha_k| \leq N} \frac{ \left| (T^* X \ms)^{\alpha_1}_1 \cdots (T^* X \ms)^{\alpha_k}_k \Phi(x,\ldots,x) \right|^{\frac{1}{s}}}{|\det T|}. 
 \end{split} \label{betterdens}
\end{equation}
For any Borel set $E \subset \Omega$,
\begin{equation} {\mathcal S}(E) \gtrsim \left[ \mu(E) \right]^s \label{betterineq} \end{equation}
with implicit constant depending only on $(n,k,s,\deg \Phi,N)$.
\end{theorem}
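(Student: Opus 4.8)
The plan is to deduce Theorem~\ref{betterthm} from a single-box estimate, Lemma~\ref{multisystem}, which carries the multilinear-algebraic and geometric weight of the argument and is established separately in Section~\ref{mainlemmasec}.

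First I would record that \eqref{betterdens} does define a locally finite Borel measure: its right-hand side may be computed as an infimum over countable dense families of multisystems $\mso$ and matrices $T$ (since $\Phi$ is smooth and the expression depends continuously on $(\mso,T)$), hence is Borel measurable in $x$, and taking $\mso$ to be the standard coordinate fields and $T=I$ bounds it locally, so $\mu$ is locally finite and absolutely continuous with respect to Lebesgue measure. Because $\mu$ is absolutely continuous we may assume $|E|>0$, and because \eqref{betterineq} is vacuous when $\mu\equiv 0$, the scaling $T\mapsto tT$ employed after \eqref{density} --- which drives the integrand to $0$ as $t\to 0^+$ whenever $s<q/n$ --- lets us assume $s\ge q/n$. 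Writing $\lambda:=\mathcal S(E)$, the goal becomes $\int_E(d\mu/dx)\,dx\lesssim\lambda^{1/s}$.

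The bridge between $\lambda$ and the density is Taylor's theorem on the diagonal. Fixing $x_0$ and using that all derivatives of $\Phi$ of order $<q$ vanish at $(x_0,\dots,x_0)$, the hypothesis $|\Phi|\le\lambda$ on $E^k$ forces the leading homogeneous part
\[
P_{x_0}(v_1,\dots,v_k):=\!\!\!\sum_{|\alpha_1|+\cdots+|\alpha_k|=q}\!\!\!\frac{\partial^{\alpha_1}_1\cdots\partial^{\alpha_k}_k\Phi(x_0,\dots,x_0)}{\alpha_1!\cdots\alpha_k!}\,v_1^{\alpha_1}\cdots v_k^{\alpha_k}
\]
to satisfy $|P_{x_0}(x_1-x_0,\dots,x_k-x_0)|\lesssim\lambda$ for all $x_1,\dots,x_k\in E\cap B_r(x_0)$ up to an $o(r^q)$ error, for every small $r$. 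Lemma~\ref{multisystem} is the quantitative sublevel-set statement that converts such a bound into a measure estimate: from $P_{x_0}$ (equivalently, from the $q$-jet of $\Phi$ at $x_0$) it produces a multisystem $\mso$ of bounded size, a matrix $T\in\GL(n,\R)$, and an associated adapted box $Q$ which together realize the infimum in \eqref{betterdens} up to a constant and for which the set where the sublevel bound holds has $\mu$-measure $\lesssim\lambda^{1/s}$, with all constants uniform in $x_0$. The ``intrinsic geometric objects'' of Section~\ref{mainlemmasec}, together with the sublevel estimates of \cite{gressman2010II}, supply this uniformity, and the coordinate independence of the right-hand side of \eqref{density} ensures that passing to $T$-adapted coordinates and to a multisystem does not change the quantity being bounded. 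Globalizing through a Vitali-type covering of $E$ by such adapted boxes and summing against $\mu$ --- using countable additivity and absolute continuity --- then yields $\mu(E)\lesssim\lambda^{1/s}$, which is \eqref{betterineq}.

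The hard part is Lemma~\ref{multisystem} itself. Uniform control of sublevel sets of a general polynomial is out of reach, and what rescues the argument is that $P_{x_0}$ is not arbitrary: being a jet on the diagonal, it inherits a determinantal and multilinear structure which the geometric objects of Section~\ref{mainlemmasec} are built to exploit, in the spirit of \cites{gressman2010II,gressman2017}. Two additional points demand care. First, \eqref{betterdens} involves derivatives of every order up to $kN$, not merely order $q$, so the adapted box must be chosen small enough --- with constants still uniform in $x_0$ --- that the order-$q$ Taylor term dominates, and it is precisely the standing assumption $s\ge q/n$ that keeps the normalizing factor $|\det T|^{-1}$ on the correct side of this scaling. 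Second, the single-box estimate must be aggregated so that its contributions are additive in $\mu$, since $\sup_\nu\mu(E\cap Q_\nu)$ is far weaker than $\sum_\nu\mu(E\cap Q_\nu)=\mu(E)$; arranging the covering and the combinatorics so that this aggregation is lossless is handled much as in the earlier sublevel Radon-like work \cites{gressman2013,gressman2015}, and I expect it to be the most technically delicate step of the reduction.
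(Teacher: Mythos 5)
Your high-level plan --- reduce everything to Lemma~\ref{multisystem} and treat that lemma as the technical core --- is the right one, but you have imagined a different lemma than the one actually stated, and as a result the reduction you describe has a real gap. Lemma~\ref{multisystem} is not a sublevel-set estimate adapted to the Taylor jet $P_{x_0}$; it makes no reference to sublevel sets, Taylor expansions, adapted boxes, or the parameter $\lambda = \mathcal S(E)$. What it actually produces, for the given measure $\mu$ and Borel set $E$, are: a subset $E' \subset E$ with $\mu(E') \gtrsim \mu(E)$, vector fields $Y_1,\ldots,Y_n$ for which the density lower bound $\mu(Y_1,\ldots,Y_n) \gtrsim \mu(E)$ holds pointwise on $E'$, and the reverse Sobolev inequality \eqref{revsob} bounding $\sup_{E'} |(Y \ms)^\alpha f|$ by $\sup_E |f|$ uniformly over all polynomials $f$ of bounded degree. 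Because the lemma already applies to the \emph{entire} Borel set $E$ at once, there is nothing to globalize; the Vitali covering and aggregation step you flag as ``the most technically delicate'' simply does not occur, and as you describe it, it would not close: a bound $\mu(E \cap Q_\nu) \lesssim \lambda^{1/s}$ on each adapted box gives, after summation, a factor equal to the number of boxes, not $\lambda^{1/s}$, and nothing in your sketch produces the needed decay across the family $\{Q_\nu\}$.

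The actual deduction is much shorter and uses no Taylor expansion at all. Pick any point $y \in E'$. Apply \eqref{revsob} to $x_k \mapsto \Phi(x_1,\ldots,x_{k-1},x_k)$ (a polynomial of degree $\leq \deg \Phi$ for each fixed $x_1,\ldots,x_{k-1} \in E$), evaluate at $x_k = y$, take the supremum over $(x_1,\ldots,x_{k-1}) \in E^{k-1}$, and iterate in each remaining variable to obtain
\begin{equation*}
\mathcal S(E) \gtrsim \sup_{(x_1,\ldots,x_{k-1}) \in E^{k-1}} \bigl| (Y \ms)^{\alpha_k}_k \Phi(x_1,\ldots,x_{k-1},y) \bigr| \gtrsim \cdots \gtrsim \bigl| (Y \ms)^{\alpha_1}_1 \cdots (Y \ms)^{\alpha_k}_k \Phi(y,\ldots,y) \bigr|
\end{equation*}
for every choice of ordered multiindices with $|\alpha_i| \leq N$. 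Taking the maximum over such multiindices and testing the infimum in \eqref{betterdens} at $T = I$ and the lemma's multisystem gives $\mathcal S(E) \gtrsim [\mu(Y_1,\ldots,Y_n)|_y]^s$, and conclusion~2 of the lemma converts this to $\mathcal S(E) \gtrsim [\mu(E)]^s$. Unbounded $E$ is handled by monotone convergence and $\mu(E) = 0$ is trivial. There is likewise no need for your preliminary reduction to $s \geq q/n$: when $s < q/n$ the scaling $T \mapsto tT$ does drive \eqref{betterdens} to zero, as you observe, but then $\mu \equiv 0$ and \eqref{betterineq} is vacuously true, which the argument above already covers.
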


It is implicit in the statement of Theorem \ref{betterthm} that the expression \eqref{betterdens} is a density in the sense of \eqref{densitydef}. To see that this is the case, it suffices to observe first that \eqref{betterdens} is zero when $X_1,\ldots,X_n$ are linearly dependent. This follows because for each $\delta > 0$, there must be a matrix $T_\delta \in \GL(n,\R)$ such that $(T^*_\delta X)_j = X_j$ for each $j$ but $\det T_\delta = \delta^{-1}$. Testing \eqref{betterdens} on this family $T_\delta$ and sending $\delta \rightarrow 0^+$ shows that the right-hand side of \eqref{betterdens} must be zero. The next step is that when $X_1,\ldots,X_n$ are linearly independent, there must be a matrix $M_X$ sending the standard basis $e_1,\ldots,e_n$ to $X_1,\ldots,X_n$, which implies that $\det M_X = \det (X_1,\ldots,X_n)$. Then because $GL(n,\R)$ is a group, one may replace $T$ everywhere on the right-hand side of \eqref{betterdens} by $(M_X^{-1})^*T$, which gives
\begin{align*}
 \mathop{\inf_{\mso \in \MS^{(N)}}}_{T \in \GL(n,\R)}&  \max_{|\alpha_1|,\ldots,|\alpha_k| \leq N} \frac{ \left| (T^* X \ms)^{\alpha_1}_1 \cdots (T^* X \ms)^{\alpha_k}_k \Phi(x,\ldots,x) \right|^{\frac{1}{s}}}{|\det T|} \\
 =   & \left[ \mathop{\inf_{\mso \in \MS^{(N)}}}_{T \in \GL(n,\R)} \max_{|\alpha_1|,\ldots,|\alpha_k| \leq N} \frac{ \left| (T^* e \ms)^{\alpha_1}_1 \cdots (T^* e \ms)^{\alpha_k}_k \Phi(x,\ldots,x) \right|^{\frac{1}{s}}}{|\det T|} \right] \\ & \qquad \cdot |\det (X_1,\ldots,X_n)|
\end{align*}
as desired.

The main lemma necessary to prove Theorem \ref{betterthm} and complete the proof of Theorem \ref{bestmeasthm} is stated below and proved in Section \ref{mainlemmasec}. It establishes the existence of a special multisystem $\mso$ and vector fields $Y_1,\ldots,Y_n$ for which it is possible to prove a kind of Bernstein or reverse Sobolev inequality on arbitrary Borel sets. 
Versions of such inequalities for intervals and boxes appear, for example, in work of Phong and Stein \cite[(2.1)]{ps1998} and Greenblatt \cite[(3.21)]{greenblatt2007}, respectively. The adaptation of such results to arbitrary Borel sets requires substantial new ideas, even in comparison to the one-dimensional version of this result appearing in \cite{gressman2009}.
 The lemma's usefulness follows from the fact that, like Lemma \ref{c0lemma}, the set $E'$ and the implicit constants are independent of the choice of $f$ within the vector space.

Assuming for the moment that Theorem \ref{betterthm} has been established, it is possible to quickly finish the proof of Theorem \ref{bestmeasthm} in the remaining special case $\sigma = n/q$. The second inequality of \eqref{twoway}, i.e.,
\[ \left[ \lambda^{\frac{n}{q}}_\Phi(E) \right]^{\frac{q}{n}} \gtrsim || \mathcal S||_{\mu,\frac{q}{n}} \left[ \mu(E) \right]^{\frac{q}{n}}, \] 
is simply a restatement of the corresponding basic inequality from \eqref{basicineq} when $\sigma = n/q$. To complete the proof of Theorem \ref{bestmeasthm}, it suffices to show when $s = q/n$ that the density \eqref{betterdens} from Theorem \ref{betterthm} is comparable to or greater than the density on the right-hand side of \eqref{hausdens} which dominates $d \mathcal H^{n/q}_\Phi / dx$. Once this is known, if $\mu$ is the measure promised by Theorem \ref{betterthm} when $s = q/n$, 
\[ \left[ \mathcal H^{n/q}_\Phi(E) \right]^{\frac{q}{n}} \lesssim  \left[ \mu(E) \right]^\frac{q}{n} \lesssim \mathcal S(E) \]
for any Borel set $E$, with uniform implicit constants depending only on the parameters $(q,k,n,\deg \Phi)$, because $\mu$ dominates $\mathcal H^{n/q}_\Phi$ by comparison of densities and $\mu$ satisfies \eqref{snorm} by Theorem \ref{betterthm}. Combining with the basic inequalities \eqref{basicineq} gives
\[ \mu(E) \approx \lambda^{\frac{n}{q}}_\Phi(E) \approx \mathcal H^{\frac{n}{q}}_\Phi(E) \]
for all Borel sets $E$, with implicit constants depending only on $(q,k,n,\deg \Phi)$. To reiterate,  $\mu$ is dominated by $\mathcal H^{n/q}_\Phi$ by virtue of the basic inequalities \eqref{basicineq}, so the densities from \eqref{betterdens} and \eqref{hausdens} must in fact be comparable, and thus the upper bound \eqref{hausdens} improves to become
\begin{equation} \frac{d \lambda_\Phi^{\frac{n}{q}}}{dx} \approx  \frac{d \mathcal H_\Phi^{\frac{n}{q}}}{d x} \approx \inf_{T \in \GL(n,\R)} \mathop{\max_{|\alpha_1| + \cdots }}_{+ |  \alpha_k| = q} \frac{|(T^* \partial)^{\alpha_1}_1 \cdots (T^* \partial)^{\alpha_k}_k \Phi (x,\ldots,x)|^\frac{n}{q}}{|\det T|} \label{hausdens2} \end{equation} 
with implicit constants depending only on $(k,n,q,\deg \Phi)$.

Thus, assuming Theorem \ref{betterthm} it suffices to compare the densities from \eqref{density} and \eqref{betterdens}, and show that the latter dominates the former.  In so doing, it further suffices to fix $X_1,\ldots,X_n$ to be the standard coordinate vectors on $\Omega \subset \R^n$. Now because $\Phi$ vanishes to order $q$ on $\Delta$, it must be the case that
\[ (T^* X \ms)^{\alpha_1}_1 \cdots (T^* X \ms)^{\alpha_k}_k \Phi(x,\ldots,x) = (T^* \partial)^{\alpha_1}_1 \cdots (T^* \partial)^{\alpha_k}_k \Phi (x,\ldots,x) \]
whenever $|\alpha_1| + \cdots + |\alpha_k| = q$ since the two differential operators have equal highest-order parts and the lower-order terms are all differential operators of order $q-1$ and lower (Note that for any ordered multiindex $\alpha_j$, the operator $\partial^\alpha_j$ makes sense as a standard multiindex because the coordinate vector fields commute.)  Therefore the inequality
\begin{align*}
\inf_{T \in \GL(n,\R)} &  \max_{|\alpha_1| + \cdots + |\alpha_k| = q} \frac{ \left|  (T^* \partial)^{\alpha_1}_1 \cdots (T^* \partial)^{\alpha_k}_k \Phi (x,\ldots,x) \right|^{\frac{n}{q}}}{|\det T|} \\
& \leq \mathop{\inf_{\mso \in \MS^{(N)}}}_{T \in \GL(n,\R)}  \max_{|\alpha_1|,\ldots,|\alpha_k| \leq q} \frac{ \left| (T^* X \ms)^{\alpha_1}_1 \cdots (T^* X \ms)^{\alpha_k}_k \Phi(x,\ldots,x) \right|^{\frac{n}{q}}}{|\det T|}
 \end{align*}
must hold. Thus the final portions of Theorem \ref{bestmeasthm} will follow once the proof of Theorem \ref{betterthm} is complete.

Theorem \ref{betterthm} is itself a rather direct consequence of the following lemma:
\begin{lemma}
Suppose that $\mu$ is a nonnegative Borel measure on $\Omega \subset \R^n$ which is absolutely continuous with respect to Lebesgue measure with locally integrable Radon-Nykodym derivative.  Let $d \geq 1$ and $N \geq 1$ be fixed positive integers.
Given any bounded Borel set $E \subset \Omega$ of finite, nonzero $\mu$-measure, there exists an open set $U$, a multisystem $\mso$ of size $N$ on $U$, vector fields \label{multisystem} $Y_1,\ldots,Y_n$ on $U$, and a Borel set $E' \subset U \cap E$ such that
\begin{enumerate}
\item $\mu(E') \gtrsim \mu(E)$
\item $\mu(Y_1,\ldots,Y_n) \gtrsim \mu(E)$ at every point of $E'$.
\item For every polynomial map $f : \Omega \rightarrow \R^m$  of degree at most $d$ and every ordered multiindex $\alpha$ with $|\alpha| \leq N$,
\begin{equation} \sup_{x \in E'} |(Y \ms)^\alpha f(x)| \lesssim \sup_{x \in E} |f(x)|. \label{revsob} \end{equation}
\end{enumerate}
The implicit constants depend only on $(n,d,N)$.
\end{lemma}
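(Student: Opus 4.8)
The plan is to prove Lemma~\ref{multisystem} by induction on the dimension $n$: the one-dimensional case supplies the analytic core, and the inductive step is carried out by a slicing (Fubini) argument that builds the multisystem one frame direction at a time. The governing heuristic is that a polynomial $f$ of degree at most $d$ carries an intrinsic multiscale structure --- on a Borel set $E$ of positive $\mu$-measure there is a bounded hierarchy of length scales, dictated jointly by the geometry of $E$ and by the density $d\mu/dx$, at which each successive derivative of $f$ is as large as $\sup_E|f|$ permits but no larger. A multisystem records this hierarchy coordinate-freely: the $i$-th frame $\{Y^{(i)}_j\}_j$ encodes the scale relevant to the $i$-th order of differentiation appearing in $(Y\ms)^\alpha$, while the single frame $Y_1,\dots,Y_n$ fixes, at each point, the common rescaling used to meet the density normalization~(2) via the density identity~\eqref{densitydef}.

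The first step is a normalization. Splitting $E$ into the density level sets $E_\nu:=\{x\in E:2^\nu\le(d\mu/dx)(x)<2^{\nu+1}\}$ reduces one, on each piece, to a measure comparable to a multiple of Lebesgue measure; the vector fields produced on $E_\nu$ are then rescaled pointwise by a factor of order $(d\mu/dx)^{-1}$ so that~(2) holds automatically. Hence the true content of the lemma is the tension between~(2), which forces the vector fields to be large exactly where the density is small, and~(3), which forces them to be small so as to keep $(Y\ms)^\alpha f$ under control; the resolution is to use the freedom to pass to a large subset $E'\subset E$ --- choosing it to avoid the points where these demands conflict --- and to use the flexibility of a size-$N$ multisystem so that mutually incompatible scales can be assigned to different orders of differentiation. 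The per-$\nu$ data must finally be amalgamated into one set $E'$ and one multisystem on one open set $U$; I would do this by a further spatial subdivision and a pigeonhole/separation step that keeps a constant-$\mu$-fraction subfamily of mutually well-separated pieces whose vector fields have length scales dominated by the separation, so that after a routine mollification the locally defined fields patch together into smooth vector fields on $U$ without interacting.

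For the base case $n=1$ I would use, as input, a Bernstein/reverse-Sobolev inequality for polynomials on \emph{measurable} sets, adapting the one-variable argument of~\cite{gressman2009}: partition $\R$ into a bounded number of intervals on each of which the sign pattern of the relevant finite-dimensional family of polynomials and their derivatives is constant (finite dimensionality together with Lemma~\ref{c0lemma} supplying the uniform control needed over that family), apply a Remez-type inequality on each interval of comparable $\mu$-mass to upgrade ``large measure'' to ``full interval'', and read the piecewise-constant coefficient of the $i$-th vector field $Y^{(i)}$ off the resulting interval lengths at level $i$; the commutation condition is vacuous here and linear independence is just positivity of these scales. In the inductive step one foliates a neighborhood by affine lines in a generic direction: on a positive-measure collection of lines one applies the lemma in dimension $1$ along the line (producing the frame direction transverse to a fixed hyperplane, which is what realizes~(2)) and in dimension $n-1$ on a transverse slice, and Fubini turns the per-line and per-slice mass bounds into the global estimate $\mu(E')\gtrsim\mu(E)$. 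To make the slice directions commute with the line direction within each level and be jointly linearly independent, I would take them, at every level, to be coordinate vector fields of a single polynomial coordinate system adapted to the foliation, so that commutation is automatic and independence is an open, generic condition secured by a final pigeonhole over finitely many coordinate choices.

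The step I expect to be the main obstacle is the amalgamation described above: reconciling the density normalization~(2) with the derivative control~(3) \emph{uniformly across the infinitely many density scales $\nu$}, while still producing vector fields defined on a common open set rather than merely measurable ones, and while preserving the commuting-plus-independent structure of a genuine multisystem. The remaining ingredients --- Remez and Bernstein on intervals, the finite-dimensional genericity arguments, and the counting --- are adaptations of established techniques, so the novelty is concentrated in this simultaneous geometric bookkeeping.
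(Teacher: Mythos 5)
Your approach diverges fundamentally from the paper's, and it contains gaps that in my assessment would not close. The paper does not induct on the dimension $n$, does not decompose $E$ into dyadic density level sets, and does not use Remez inequalities or Fubini slicing; the paper's argument is direct in $n$ dimensions and inducts only on the multisystem size $N$.

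The paper instead recycles the ``optimal basis'' device from Lemma~\ref{c0lemma}: using that $f\mapsto\sup_E|f|$ is a norm on the $D$-dimensional space $\mathcal F_0$ of polynomials of degree $\le d$, Cramer's rule produces $f_1,\ldots,f_D$ with $\sup_E|f_j|\le 1$ such that every $f\in\mathcal F_0$ expands as $f=\sum_j c_j f_j$ with $|c_j|\le\sup_E|f|$. The vector fields are then defined on the open set $U=U_{j_1,\ldots,j_n}$ (where $|df_{j_1}\wedge\cdots\wedge df_{j_n}|$ is essentially maximal) by the rational formula \eqref{vecdef}, so that $Y_if_{j_{i'}}=\delta_{ii'}$. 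This gives smoothness and commutation for free, yields the pointwise bound $|Y_i f|\le 2D\sup_E|f|$ immediately from the Cramer's-rule expansion, and converts property~(2) into a change-of-variables/B\'ezout counting estimate: $\int_{E\cap U}|\mu(Y_1,\ldots,Y_n)|^{-1}\,d\mu=\int_{E\cap U}|df_{j_1}\wedge\cdots\wedge df_{j_n}|$ is bounded by the number of nondegenerate solutions of the polynomial system $(f_{j_1},\ldots,f_{j_n})=a$, and Tchebyshev then carves out $E'$. The induction on $N$ simply re-applies the $N=1$ construction to the enlarged function space $\mathcal F_N:=\mathrm{span}(\mathcal F_{N-1}\cup\bigcup_i Y_i\mathcal F_{N-1})$.

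The concrete obstruction to your plan is that you need genuine smooth vector fields on an open set $U$, not measurable ones, and both of your main devices destroy this. Rescaling the frame pointwise by a power of $(d\mu/dx)^{-1}$ to enforce~(2) produces a field with only measurable coefficients; moreover it simultaneously wrecks~(3) wherever the density is small, and no mollification can repair this uniformly in the density parameter $\nu$, since the required rescaling is unbounded as $\nu\to-\infty$. Your proposal never explains how the per-$\nu$ pieces reassemble into one smooth frame with one constant in~(2) and~(3); and indeed this ``amalgamation'' problem is an artifact of the dyadic decomposition, which the paper avoids entirely by letting the counting argument, not pointwise rescaling, produce the density lower bound. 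Similarly, the slicing/Fubini induction on $n$ would not naturally produce a frame of commuting, jointly independent vector fields at each of the $N$ levels: your commuting condition depends on an adapted polynomial coordinate system chosen ``generically,'' whereas the paper's definition \eqref{vecdef} makes the $Y_i$ local coordinate vector fields (hence commuting and independent) by construction, since $Y_i f_{j_{i'}}=\delta_{ii'}$. You have correctly located the tension between properties~(2) and~(3), but the mechanism the paper uses to resolve it---a single maximal basis, rational vector fields, and an $L^1$ (not pointwise) bound on $\mu(Y_1,\ldots,Y_n)^{-1}$ via B\'ezout counting---is missing from your outline, and the substitutes you propose do not deliver the smoothness and uniformity the statement requires.
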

\begin{proof}[Proof of Theorem \ref{betterthm} assuming Lemma \ref{multisystem}.]
At this point, the proof of Theorem \ref{betterthm} is almost the same as the proof of Theorem \ref{aseq}. Let $E$ be a bounded Borel measurable set with positive $\mu$ measure. Fix an integer $N > 0$ and let the multisystem $\mso$, vector fields  $Y_1,\ldots,Y_n$, and sets $E'$ and $U$ be as in Lemma \ref{multisystem}. Let $y$ be any point in $E'$.  If $\alpha_1,\ldots,\alpha_k$ are ordered multiindices such that $|\alpha_i| \leq N$ for all $i=1,\ldots,k$, then
\begin{align*}
\sup_{(x_1,\ldots,x_k) \in E^{k}} |\Phi(x_1,\ldots,x_k)| & \gtrsim
\sup_{(x_1,\ldots,x_{k-1}) \in E^{k-1}} |(Y \ms)_k^{\alpha_k} \Phi(x_1,\ldots,x_{k-1},y)| \\
& \gtrsim \cdots \gtrsim |(Y \ms)^{\alpha_1}_1 \cdots (Y \ms)^{\alpha_k}_k \Phi(y,\ldots,y)|.
\end{align*}
Taking a maximum over $\alpha_1,\ldots,\alpha_k$ and comparing to the definition \eqref{betterdens} of the density $\mu$ (fixing $T$ to be the identity), it follows that
\[ \sup_{(x_1,\ldots,x_k) \in E^{k}} |\Phi(x_1,\ldots,x_k)| \gtrsim \left[ \left. \mu(Y_1,\ldots,Y_n) \right|_y  \right]^s \gtrsim \left[ \mu(E) \right]^s. \]
This is exactly the desired inequality \eqref{betterineq}. If $\mu(E) = 0$, the inequality \eqref{betterineq} is trivial, so the only remaining case is when $E$ is an unbounded Borel set. In this case, $E = \bigcup_{M=1}^\infty E_M$, where $E_M := E \cap \set{x \in \Omega}{|x| \leq M}$. Then by Monotone Convergence,
\[ {\mathcal S}(E) \geq \sup_{M} {\mathcal S}(E_M) \gtrsim \sup_{M} \left[ \mu(E_M) \right]^s = \left[ \mu(E) \right]^s \]
as desired.
\end{proof}

\subsection{Remarks on calculation}
\label{calcsec}
Before proceeding with the proof of Lemma \ref{multisystem}, it is perhaps worthwhile to make some elementary remarks regarding the infimum appearing in \eqref{betterdens} or \eqref{hausdens} since from a practical perspective it represents the most difficult part of any actual calculation of the density. 
If $\alpha$ is any ordered multiindex of order $d$, then by multilinearity it follows for any invertible square matrices $T$ and $O$ that 
\[ ((TO^{-1})^* X \ms)^\alpha = \sum_{|\beta| = d} O_{\beta_1 \alpha_1}^{-1} \cdots O_{\beta_d \alpha_d}^{-1} (T X \ms)^\beta. \]
If, for example, $O$ is an orthogonal matrix, it must then be the case that
\begin{equation}
\begin{split}
  \max_{|\alpha_1|,\ldots,|\alpha_k| \leq N} &  |((TO^{-1})^* X \ms)^{\alpha_1}_1  \cdots ((T O^{-1})^* X \ms)^{\alpha_k}_k \Phi(x,\ldots,x) | \\ 
  &\leq n^{Nk} \max_{|\alpha_1|,\ldots,|\alpha_k| \leq N}  |(T^* X \ms)^{\alpha_1}_1 \cdots (T^* X \ms)^{\alpha_k}_k \Phi(x,\ldots,x) | \label{demobound}
  \end{split}
  \end{equation}
by simply using the fact that $|O_{jk}^{-1}| \leq 1$ and making the conservative estimate that the number of terms in the expanded multilinear sum is never greater than $n^{Nk}$. This simple calculation shows that the infimum over $T \in \GL(n,\R)$ in \eqref{betterdens} is always comparable (up to a factor depending only on $n,k,N,$ and $s$) to the infimum over all matrices in some fixed subset $\mathcal G \subset \GL(n,\R)$  provided that every matrix $T \in \GL(n,\R)$ has a factorization $T = G O$ where $G \in \mathcal G$  and $O$ is orthogonal. The propositions below demonstrate two slightly different applications of this same idea.

The first example is based on the Singular Value Decomposition. Using this simplification, it is possible to characterize the positivity of the density \eqref{hausdens2} pointwise in terms of a height-type criterion for certain Newton-like polytopes. Algebraically, the proposition is closely related to the Hilbert-Mumford criterion, which was first proved in the real-valued case proved by Birkes \cite{birkes1971}.
\begin{proposition}
For any $x \in \Omega$, if $\Phi$ vanishes to order $q$ at $(x,\ldots,x)$, then
\[ \inf_{T \in GL(n,\R)} \mathop{\max_{|\alpha_1| + \cdots}}_{+ |\alpha_k| = q} \frac{ \left| (T^* \partial)_1^{\alpha_1} \cdots (T^* \partial)_k^{\alpha_k} \Phi(x,\ldots,x) \right|^{\frac{n}{q}}}{|\det T|} > 0 \] if and only if
for every orthogonal matrix $O$, the point $(q/n,\ldots,q/n) \in [0,\infty)^n$ belongs to the convex hull in $[0,\infty)^{n}$ of the set
\begin{equation} \set{ \alpha_1 + \cdots + \alpha_k }{ (O^* \partial)^{\alpha_1}_1 \cdots (O^* \partial)_k^{\alpha_k} \Phi(x,\ldots,x) \neq 0, \ \sum_{j=1}^k |\alpha_j| = q}. \label{theset} \end{equation}
\end{proposition}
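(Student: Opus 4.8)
The plan is to bring the infimum over $\GL(n,\R)$ into a standard logarithmic/tropical form by means of the singular value decomposition, and then to read off its positivity from a separating‑hyperplane argument, exactly as in the real Hilbert–Mumford criterion. First I would write an arbitrary $T\in\GL(n,\R)$ as $T=U\Sigma W$ with $U,W$ orthogonal and $\Sigma=\mathrm{diag}(\sigma_1,\dots,\sigma_n)$, $\sigma_i>0$. The multilinearity argument underlying \eqref{demobound} (applied with total order of differentiation $q$) shows that the maximum in the density changes by at most a factor depending only on $n$ and $q$ when $T$ is replaced by $TW^{-1}=U\Sigma$, while $|\det T|$ is unchanged; hence the infimum over $\GL(n,\R)$ is comparable to — and in particular has the same positivity as — the infimum over $T=U\Sigma$ with $U\in O(n)$ and $\Sigma$ positive diagonal. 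For such $T$ the definition of $T^*\partial$ gives $((U\Sigma)^*\partial)_i=\sigma_i(U^*\partial)_i$, and since the coordinate fields commute,
\[ ((U\Sigma)^*\partial)_1^{\alpha_1}\cdots((U\Sigma)^*\partial)_k^{\alpha_k}\Phi=\sigma^{\,\alpha_1+\cdots+\alpha_k}\,(U^*\partial)_1^{\alpha_1}\cdots(U^*\partial)_k^{\alpha_k}\Phi,\qquad \sigma^\gamma:=\prod_i\sigma_i^{\gamma_i}. \]

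Next I would substitute $\sigma_i=e^{t_i}$, group the maximum over $\alpha_1,\dots,\alpha_k$ according to the combined multiindex $\beta:=\alpha_1+\cdots+\alpha_k$ (which automatically satisfies $|\beta|=q$), and set
\[ c_\beta(U):=\max_{\alpha_1+\cdots+\alpha_k=\beta}\bigl|(U^*\partial)_1^{\alpha_1}\cdots(U^*\partial)_k^{\alpha_k}\Phi(x,\dots,x)\bigr|,\qquad \bar q:=(q/n,\dots,q/n). \]
Using $|\det(U\Sigma)|=\prod_i\sigma_i$, a direct computation shows that the density at $T=U\Sigma$ equals
\[ \max_{\beta\,:\,c_\beta(U)\neq 0}\ c_\beta(U)^{n/q}\,\exp\!\Bigl(\tfrac nq\langle t,\beta-\bar q\rangle\Bigr), \]
to be infimized over $t\in\R^n$ and $U\in O(n)$; note that the indexing set $\{\beta:c_\beta(U)\neq0\}$ is exactly the set \eqref{theset} with $O=U$. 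The elementary fact I would then invoke is that for finitely many strictly positive reals $a_j$ and vectors $v_j\in\R^n$, $\inf_{t\in\R^n}\max_j a_je^{\langle t,v_j\rangle}>0$ if and only if $0$ lies in the convex hull of $\{v_j\}$: the ``if'' because a convex representation $0=\sum_j\lambda_jv_j$ forces $\max_j\langle t,v_j\rangle\geq0$ for every $t$, so the maximum is at least $\min_j a_j$; the ``only if'' because a hyperplane separating $0$ from $\mathrm{conv}\{v_j\}$ gives a ray along which the whole expression tends to $0$. Applied with $v_j=\beta-\bar q$ and fixed $U$, this immediately yields the forward implication (positivity of the full infimum forces, for every $O$, that $\bar q$ lie in the convex hull of \eqref{theset}) and furnishes, for each fixed $U$ satisfying the hull condition, the lower bound $\bigl(\min_{\beta:c_\beta(U)\neq0}c_\beta(U)\bigr)^{n/q}$ for the inner infimum over $\Sigma$.

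The step I expect to be the main obstacle is the backward implication, where this per-$U$ bound must be made uniform over the orthogonal group: the function $U\mapsto\inf_\Sigma(\cdots)$ is only upper semicontinuous, being an infimum of continuous functions, so pointwise positivity does not at once give a uniform positive bound, and the coefficients $c_\beta(U)$ may degenerate to $0$ as $U$ varies. I would handle this by compactness: if $\bar q$ lies in the convex hull of \eqref{theset} for every $O$ but the infimum over $(U,\Sigma)$ of the density were $0$, choose $U_m\to U_\infty$ along which this happens. Each $c_\beta$ is continuous in $U$, so for every $\beta$ with $c_\beta(U_\infty)\neq0$ one has $c_\beta(U_m)\geq\varepsilon>0$ for all large $m$; hence the set \eqref{theset} for $O=U_\infty$ is contained in that for $O=U_m$ once $m$ is large, and since $\bar q$ lies in the convex hull of the former, the elementary fact above (applied with only those $\beta$) bounds the density at $U_m\Sigma$ below by $\varepsilon^{n/q}$ for every $\Sigma$ and all large $m$, a contradiction. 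This also subsumes the degenerate case in which \eqref{theset} is empty for some $O$ — that is, all order-$q$ derivatives of $\Phi$ vanishing at $(x,\dots,x)$ in the corresponding rotated coordinates — since then its convex hull is empty, $\bar q$ does not lie in it, and correspondingly the numerator of the density vanishes identically for that $U$, so both sides of the equivalence fail. As the text remarks, this argument is the geometric‑measure‑theoretic shadow of the Hilbert–Mumford numerical criterion in its real form (Birkes \cite{birkes1971}): the one-parameter groups $t\mapsto\mathrm{diag}(e^{t_1},\dots,e^{t_n})$ play the role of destabilizing subgroups and the exponents $\beta-\bar q$ are the associated weights.
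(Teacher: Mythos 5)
Your proof is correct and follows the same route as the paper: the singular value decomposition combined with the reduction \eqref{demobound} to a diagonal infimum, the exponential/tropical reparametrization reducing positivity to a convex‑hull criterion (whose failure is detected by a separating hyperplane), and compactness of $O(n,\R)$ to make the per‑$O$ lower bound uniform. The only cosmetic differences are that you regroup the $\alpha_j$'s by their sum $\beta$ before maximizing, and you obtain the uniform bound by a minimizing‑sequence contradiction rather than the paper's covering argument, but both are standard equivalent ways to use compactness with the upper‑semicontinuous function $U \mapsto \inf_\Sigma(\cdots)$.
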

\begin{proof}
By the SVD, every $T \in GL(n,\R)$ factors as $T = O_1 D O_2$ where $O_1,O_2 \in O(n,\R)$ and $D$ is a nonnegative diagonal matrix. If the diagonal entries of $D$ are denoted $(t_1,\ldots,t_n)$, the expansion analogous to \eqref{demobound} gives that
\begin{equation}
\begin{split}
 n^{qk} & \inf_{T \in GL(n,\R)}   \mathop{\max_{|\alpha_1| + \cdots}}_{+ |\alpha_k| = q} \frac{ \left| (T^* \partial)_1^{\alpha_1} \cdots (T^* \partial)_k^{\alpha_k} \Phi(x,\ldots,x) \right|^{\frac{n}{q}}}{|\det T|} \\
 \geq
 & \mathop{\inf_{O_1 \in O(n,\R)}}_{t \in (0,\infty)^n} \mathop{\max_{|\alpha_1| + \cdots}}_{+ |\alpha_k| = q} t^{- {\bf 1} + \frac{n}{q} \sum_{j=1}^k \alpha_j} \left| (O_1^* \partial)_1^{\alpha_1} \cdots (O_1^* \partial)_k^{\alpha_k} \Phi(x,\ldots,x) \right|^{\frac{n}{q}} \label{hmc}
 \end{split}
 \end{equation}
 where ${\bf 1} := (1,\ldots,1) \in \Z^n$. It is also trivially true that the inequality \eqref{hmc} is reversed when the factor of $n^{qk}$ is omitted. Thus it suffices to find necessary and sufficient conditions for the quantity on the right-hand side of \eqref{hmc} to be nonzero. For convenience, let $a$ denote any $k$-tuple of multiindices $(\alpha_1,\ldots,\alpha_k)$ with $|\alpha_1| + \cdots + |\alpha_k| = q$, and define $\Sigma a := \alpha_1 + \cdots + \alpha_k$ and
 \[ C_a :=   \left| (O_1^* \partial)_1^{\alpha_1} \cdots (O_1^* \partial)_k^{\alpha_k} \Phi(x,\ldots,x) \right|^{\frac{n}{q}}. \] 
 If $(q/n) {\bf 1}$ belongs to the convex hull of the set \eqref{theset} for every $O$, then for every $O$ it must be possible to find $a_1,\ldots,a_{N_O}$ and $\theta_1,\ldots,\theta_{N_O} \in [0,1]$ such that $\theta_1 + \cdots + \theta_{N_O} = 1$,
 \[ \sum_{j=1}^{N_O} \theta_j \Sigma a_j = \frac{q}{n} {\bf 1}, \]
 and $C_{a_j} > 0$ for $j=1,\ldots,N_O$. Because a maximum of terms always dominates any convex combination, it follows that
 \begin{equation} \inf_{t \in (0,\infty)^n} \max_{a} t^{-{\bf 1} + \frac{n}{q} \Sigma a} C_a \geq \inf_{t \in (0,1)^n} \prod_{j=1}^{N_O} (t^{-{\bf 1} + \frac{n}{q} \Sigma a_j} C_{a_j})^{\theta_j}  = \prod_{j=1}^{N_O} (C_{a_j})^{\theta_j}. \label{info} \end{equation}
The quantities $C_a$ are continuous functions of $O$ and nonzero at the particular $O$ in question, so each $C_{a_j}$ is strictly positive on a neighborhood of $O$ and consequently the infimum \eqref{info} must be bounded below by a positive quantity on a neighborhood of $O \in O(n,\R)$. By compactness of the orthogonal group, the infimum \eqref{hmc} must be strictly positive.

If, on the other hand, there is some $O \in O(n,\R)$ such that $(q/n) \bf 1$ does not belong to the convex hull of \eqref{theset}, then the Separating Hyperplane Theorem guarantees the existence of $\ell \in \R^n$ such that $\ell \cdot \Sigma a > (q/n) \ell \cdot \bf 1$ for all $a$. Taking $t = (e^{-s \ell_1},\ldots,e^{-s \ell n})$ gives
\[ t^{-{\bf 1} + \frac{n}{q} \Sigma a} = e^{- \frac{s n}{q} \ell \cdot \left( \Sigma a - \frac{q}{n} {\bf 1} \right)} \rightarrow 0 \]
as $s \rightarrow \infty$ for all $a$. Consequently the infimum \eqref{hmc} must be zero.
\end{proof}

For the second example, recall the determinantal Hausdorff measure from Section \ref{examples1}. In that section, it was claimed that
\[ |E| \lesssim \sup_{A_1,A_2 \in E} |\det (A_1 - A_2)|^n \]
for any Borel set $E \subset \R^{n \times n}$. By virtue of Theorem \ref{bestmeasthm}, to prove this inequality, it suffices to show that the density \eqref{hausdens2} is uniformly bounded below. This calculation is relatively straightforward for triangular matrices $T$ and is recorded in the following proposition:
\begin{proposition}
Let \[ \Phi(A_1,A_2) = \det (A_1 - A_2), \] 
where $A_1$ and $A_2$ denote matrices in $\R^{n \times n}$. Then the Radon-Nykodym derivative  $d \lambda^{n}_\Phi / dx$ is uniformly bounded below by a constant depending only on $n$.  \label{calcprop}
\end{proposition}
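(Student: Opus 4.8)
The plan is to unpack the density formula \eqref{hausdens2} for this particular $\Phi$ and reduce the asserted lower bound to a concrete statement about the determinant polynomial. Here the ambient space $\R^{n\times n}$ has dimension $n^2$ and $\Phi(A_1,A_2)=\det(A_1-A_2)$ vanishes to order exactly $q=n$ on the diagonal, so by \eqref{hausdens2} the value of $d\lambda^n_\Phi/dx$ at a point $A$ is comparable, with constant depending only on $n$, to $\inf_{T\in\GL(n^2,\R)}\max_{|\alpha_1|+|\alpha_2|=n}|(T^*\partial)^{\alpha_1}_1(T^*\partial)^{\alpha_2}_2\Phi(A,A)|^{n}/|\det T|$. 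Because $\Phi$ depends only on $A_1-A_2$ and is homogeneous of degree $n$ in that difference, this mixed derivative equals $(-1)^{|\alpha_2|}(T^*\partial)^{\alpha_1+\alpha_2}\det(0)$, independent of $A$; every multiindex $\beta$ with $|\beta|=n$ arises as some $\alpha_1+\alpha_2$; and, writing $\det(\mathrm{mat}(Tz))$ for the polynomial in $z\in\R^{n^2}$ obtained by composing $\det$ with $z\mapsto\mathrm{mat}(Tz)$ (reshaping $Tz$ to an $n\times n$ matrix), one has $(T^*\partial)^\beta\det(0)=\beta!\,[z^\beta]\det(\mathrm{mat}(Tz))$. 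Hence $d\lambda^n_\Phi/dx$ is the single constant $\approx\inf_{T}\bigl(\max_{|\beta|=n}|[z^\beta]\det(\mathrm{mat}(Tz))|\bigr)^{n}/|\det T|$, and it suffices to show this infimum is positive. By the argument around \eqref{demobound} (or, equivalently, by the singular value decomposition as in the previous proposition) it is enough to bound this quantity below when $T$ ranges over triangular matrices, for which $|\det T|=\prod_{m=1}^{n^2}|T_{mm}|$ since reshaping is volume preserving.

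Next I would choose a good family of test monomials. Order the $n^2$ coordinates of $\R^{n\times n}$ so that the diagonal entries $E_{11},\dots,E_{nn}$ come first, and partition the grid into the $n$ broken diagonals $D_0,\dots,D_{n-1}$, with $(i,j)\in D_d$ iff $j-i\equiv d\pmod n$; each $D_d$ is the cell pattern of a permutation $\sigma_d$ (with $\sigma_0=\mathrm{id}$), and the $D_d$ partition the grid. Ordering the off-diagonal coordinates so that each $D_d$ with $d\geq1$ occupies a contiguous block, the squarefree monomial $M_d:=\prod_{c\in D_d}z_c$ has exponent $\beta^{(d)}$ equal to the indicator of that block, so $\tfrac1n\sum_{d=0}^{n-1}\beta^{(d)}=\tfrac1n\mathbf 1$, the barycenter of the simplex $\{x\geq0,\ \sum_i x_i=n\}$ containing the whole Newton polytope of $\det$. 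Expanding $\det(\mathrm{mat}(Tz))$ by the Leibniz formula and using triangularity shows $[z^{\beta^{(0)}}]\det(\mathrm{mat}(Tz))=\prod_{i=1}^{n}T_{ii}$ exactly, and for each $d\geq1$ that the monomial $\prod_{c\in D_d}T_{cc}$ (a monomial in the diagonal entries of $T$) occurs with coefficient $\pm1$ in $[z^{\beta^{(d)}}]\det(\mathrm{mat}(Tz))$. Granting for the moment that this forces $|[z^{\beta^{(d)}}]\det(\mathrm{mat}(Tz))|\gtrsim\prod_{c\in D_d}|T_{cc}|$ for every $d$, the fact that a maximum dominates the equally weighted geometric mean of a subcollection gives
\[
\max_{|\beta|=n}\bigl|[z^\beta]\det(\mathrm{mat}(Tz))\bigr|\ \gtrsim\ \prod_{d=0}^{n-1}\Bigl(\prod_{c\in D_d}|T_{cc}|\Bigr)^{1/n}=\Bigl(\prod_{m=1}^{n^2}|T_{mm}|\Bigr)^{1/n}=|\det T|^{1/n},
\]
which is exactly the bound needed to conclude that the infimum above is positive.

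The hard part will be the ``granting'' step: the coefficient $[z^{\beta^{(d)}}]\det(\mathrm{mat}(Tz))$ also contains monomials built from the off-diagonal entries of $T$, and one must rule out that these conspire to make it much smaller than $\prod_{c\in D_d}|T_{cc}|$. Equivalently, one must show that the Newton polytope of $\det$ in the $T$-coordinates contains the barycenter with a quantitative margin depending only on $n$ --- which is precisely the real-field Hilbert--Mumford / semistability statement for the determinant referred to in the discussion before the previous proposition (cf.\ Birkes \cite{birkes1971}). I expect to settle this either by enlarging the family of test monomials beyond the broken diagonals so that the barycenter becomes an interior point of the support's convex hull with a controlled margin, or by isolating and treating directly the finitely many critical triangular configurations. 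As a reassuring check, the case $n=2$ is transparent and requires none of this: $\det$ is then a nondegenerate quadratic form on $\R^4$, so $\det(\mathrm{mat}(Tz))$ is a quadratic form whose symmetric coefficient matrix has determinant $c(\det T)^2\neq0$, and Hadamard's inequality forces one of its entries --- hence one of the coefficients $[z^\beta]\det(\mathrm{mat}(Tz))$ --- to have absolute value $\gtrsim|\det T|^{1/2}$ for every $T\in\GL(4,\R)$.
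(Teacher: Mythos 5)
Your overall framework — reduce by QR to triangular $T$, extract a spanning family of degree-$n$ monomials whose exponents together cover every cell, and conclude by a maximum-dominates-geometric-mean estimate — is exactly the paper's plan. The gap is real, however, and it is created by your choice of ordering. With the ``diagonal entries first'' ordering, the claimed identity $[z^{\beta^{(0)}}]\det(\mathrm{mat}(Tz))=\prod_{i}T_{ii}$ is already false for $n=2$: with positions $(1,1),(2,2),(1,2),(2,1)$ and $T$ lower triangular one computes
\[
\det(\mathrm{mat}(Tz)) = (T_{11}z_1)(T_{21}z_1+T_{22}z_2) - (T_{31}z_1+T_{32}z_2+T_{33}z_3)(T_{41}z_1+\cdots+T_{44}z_4),
\]
so $[z_1z_2]\det(\mathrm{mat}(Tz)) = T_{11}T_{22} - T_{31}T_{42} - T_{32}T_{41}$, not $T_{11}T_{22}$. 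The extra terms arise precisely because, in your ordering, the off-diagonal cells carry larger position indices than all diagonal cells, so an off-diagonal factor $(\mathrm{mat}(Tz))_{ij}$ is free to contribute any of $z_1,\dots,z_n$ once $T$ is lower triangular. Thus the ``granting'' step is not a mere missing lemma; your argument produces an identity that is wrong, and the cancellation you worry about is genuinely present for the $d=0$ monomial itself.

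The paper avoids this entirely by ordering the cells lexicographically in $(i,j)$. That ordering is row-blocked: $m(i,j)>m(i',j')$ whenever $i>i'$. Consequently, for lower triangular $T$, when one expands $\partial_{i_1j_1}\cdots\partial_{i_nj_n}\det$ arising from the operator attached to a permutation $\sigma$, the constraint $m(i_a,j_a)\ge m(a,\sigma_a)$ forces $i_a\ge a$, and since the $i$'s must be distinct for a nonzero derivative of $\det$, pigeonhole gives $i_a=a$; the residual constraint $j_a\ge\sigma_a$ then forces $j_a=\sigma_a$ for the same reason. In other words, with lexicographic ordering the Taylor coefficient at \emph{any} permutation exponent $\beta^{(\sigma)}$ is exactly $\pm\prod_a T_{m(a,\sigma_a),m(a,\sigma_a)}$ — a pure product of diagonal entries of $T$ — with no off-diagonal contributions at all, which is precisely the exactness you needed to grant. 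The geometric mean over $\sigma\in\mathfrak S_n$ (or, just as well, over your $n$ broken diagonals, which also cover every cell exactly once) then gives $\prod_m|T_{mm}|^{1/n}=|\det T|^{1/n}$ and the proof closes. So your strategy is sound, but you must replace the ``diagonal first'' ordering with lexicographic ordering; once you do, no semistability or Newton-polytope margin estimate is required.
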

\begin{proof}
Before beginning, note that the correct $\Phi$-Hausdorff dimension for this problem is $n$ because $n^2$ is the dimension of the parameter space $\R^{n \times n}$ and $q = n$ is the order of vanishing of $\Phi$ on the diagonal.

Order the entries $(i,j)$ of $n \times n$ matrices lexicographically and let $\partial_{ij}$ correspond to differentiation in the direction of the $(i,j)$ entry. For any $T \in GL(n \times n,\R)$, one may write $T = L Q$ for a lower triangular matrix $L$ and an orthogonal matrix $Q$ (this is just the so-called $QR$ decomposition applied to $T^*$). Consequently, in taking the infimum \eqref{hausdens2}, up to a uniform constant, it suffices to assume that $T$ is lower triangular; in this case the directional derivatives $Y_{ij} := (T^* \partial)_{ij}$ are spanned by $\partial_{i'j'}$ for those entries $(i',j')$ which are lexicographically greater than or equal to $(i,j)$.

Because the determinant is a linear function of each column and each row of a matrix,
\[ \partial_{i_1 j_1} \cdots \partial_{i_n j_n} \det (\cdot) = 0 \]
if either the indices $i_1,\ldots,i_n$ or the indices $j_1,\ldots,j_n$ are not distinct. When both the $i$'s and the $j$'s are distinct, the value of the derivative is $\pm 1$ depending on the relative orderings of the indices.
By definition of the directional derivatives $Y_{ij}$, the differential operator $Y_{1\ell_1} \cdots Y_{n \ell_n}$ can always be written as a linear combination of derivatives $\partial_{i_1 j_1} \cdots \partial_{i_n j_n}$ where $(i_1,j_1) \geq (1,\ell_1),\ldots,(i_n, j_n) \geq (n , \ell_n)$ lexicographically. However, among all such possible choices of the entries $(i_1,j_1),\ldots,(i_n,j_n)$, there is only one possibility where the $i$'s and $j's$ are distinct: $(i_1,j_1) = (1,\ell_1),\ldots,(i_n,j_n) = (n,\ell_n)$. This is because $i_1 \geq 1,\ldots,i_n \geq n$, so by the Pigeonhole Principle, the $i$'s can only be distinct when $i_1 = 1,\ldots i_n = n$. This forces $j_1 \geq \ell_1,\ldots,j_n \geq \ell_n$, which implies $j_1 = \ell_1,\ldots,j_n = \ell_n$ for the same reason because $\ell_1,\ldots,\ell_n$ are already distinct. Therefore
\[ Y_{1\ell_1} \cdots Y_{n \ell_n} \det (\cdot) = \pm c_{1 \ell_1} \cdots c_{n \ell_n} \]
where $c_{ij}$ is the coefficient of $\partial_{ij}$ in the expansion of $Y_{ij}$. It follows that
\[ \left| \prod_{\sigma \in \mathfrak S_n} \left[ Y_{1 \sigma_1} \cdots Y_{n \sigma_n} \det ( \cdot) \right] \right|^{\frac{1}{n!}} = \prod_{i,j=1}^n |c_{ij}|^{\frac{1}{n}} \]
since each entry $(i,j)$ appears in a $1/n$ fraction of all permutations $\sigma$. Because $T$ is lower triangular, the product of all $|c_{ij}|$ is just the absolute value of the determinant. Therefore
\[ \max_{|\alpha| = n} |(T^* \partial)^{\alpha}_1 \Phi(A,A)| \geq |\det T|^\frac{1}{n}\]
for any lower triangular matrix $T$.
Raising both sides to the power $n$ gives exactly the desired lower bound for the density \eqref{hausdens2}.
\end{proof}

As a final remark on calculation, note that the simplifications used above apply equally well to Theorem \ref{betterthm}. Using the QR decomposition as above, for example, it is possible to show that the function $\Phi$ on $\R^2 \times \R^2$ given by
\[ \Phi((x_1,y_1),(x_2,y_2)) = (x_1 - x_2)^2 + (y_1 - y_2)^3 \]
satisfies the nonconcentration inequality
\[ \mathcal S(E) \gtrsim |E|^{\frac{6}{5}}, \]
which is an interesting result because this $\Phi$ is degenerate when $\sigma = n/q = 2/2$. The necessary calculation is relatively simple when one assumes without loss of generality that one of the two vectors in the pair $T^* X$ points in the $y$-direction.

\section{Proof of Lemma \ref{multisystem}}
\label{mainlemmasec}
\subsection{Construction of the multisystem}
\begin{proof}
The proof begins by establishing that it suffices to assume that the functions $f$ are scalar-valued, i.e., that $m=1$.  When $m > 1$, as previously noted in \eqref{mnorm}, there must exist a symmetric, compact, convex set $K^* \subset \R^m$ such that
\[ |v| = \sup_{\ell \in K^*} | \ell \cdot v | \]
for all $v \in \R^m$.
Taking $f := (f_1,\ldots,f_m)$ to be a polynomial map of degree $d$ and assuming the lemma for the case $m=1$ gives
\begin{align*}
\sup_{x \in E'} |(Y \ms)^\alpha f(x)| &  =   \sup_{x \in E'} \sup_{\ell \in K^*} |(Y \ms)^\alpha (\ell \cdot f)(x)| \\
& \lesssim \sup_{\ell \in K^*} \sup_{x \in E} |(\ell \cdot f)(x)| = \sup_{x \in E} |f(x)|,
\end{align*}
so the implicit constant can taken to be independent of $m$ and of the choice of norm $|\cdot|$ on $\R^m$.

Let $\mathcal F_0$ be the vector space of polynomials $f$ of degree at most $d$ and let $D := \dim \mathcal F_0 $. Because $E$ is bounded, all polynomials of degree $d$ are bounded on $E$, and because $E$ has nonzero $\mu$ measure, no nontrivial polynomial can vanish identically on $E$. Thus $f \mapsto \sup_{x \in E} |f(x)|$ is a norm on $\mathcal F_0$, and
as in the proof of Lemma \ref{c0lemma}, one may fix $\det$ to be any nonzero alternating $D$-linear form on $\mathcal F_0$. Using this $\det$ just as was done earlier, it is possible to find $f_1,\ldots,f_D \in {\mathcal F}_0$ such that $\sup_{x \in E} |f_j(x)| \leq 1$ 
and
\begin{equation} f = \sum_{j=1}^N c_j f_j \label{coeffs} \end{equation}
for any $f \in \mathcal F_0$
with constants $c_j$ satisfying $|c_j| \leq \sup_{x \in E} |f(x)|$ for each $j=1,\ldots,D$. For any $n$-tuple $(j_1, j_2,\ldots,j_n)$ of indices in $\{1,\ldots,D\}$ such that $j_1 < j_2 < \cdots < j_n$, let $U_{j_1,\ldots,j_n}$ be the open set of points $x \in \Omega$ such that
\[ \left|  \left. d f_{j_1} \wedge \cdots \wedge d f_{j_n} \right|_x \right| > \frac{1}{2} \left| \left. d f_{i_1} \wedge \cdots \wedge df_{i_n} \right|_x \right| \mbox{ for all } i_1,\ldots,i_n \in \{1,\ldots,D\} \]
where $\left. d f \right|_x$ denotes the exterior derivative of $f$ at the point $x$. The union of all $U_{j_1,\ldots,j_n}$ over all possible $j_1 < \cdots < j_n$ must be all of $\Omega$ because at every point $x$ there must be some $j_1 < \cdots < j_n$ for which $\left. d f_{j_1} \wedge \cdots \wedge d f_{j_n} \right|_x$ is nonzero. Since these open sets cover $\Omega$, they cover $E$ as well, and there must consequently be a single choice of $j_1 < \cdots < j_n$ such that $\mu(E \cap U_{j_1,\ldots,j_n}) \geq D^{-n} \mu(E)$. On $U := U_{j_1,\dots,j_n}$, define vector fields $Y_1,\ldots,Y_n$ by means of the formula
\begin{equation} Y_i f := \frac{d f_{j_1} \wedge \cdots \wedge d f \wedge \cdots \wedge d f_{j_n}}{d f_{j_1} \wedge \cdots \wedge d f_{j_n}}, \label{vecdef} \end{equation}
where $df$ in the numerator appears in position $i$ of the wedge product and replaces $df_{j_i}$. This means that $Y_i f_{j_{i'}}$ vanishes if $i \neq i'$ and is identically one on $U_{j_1,\ldots,j_n}$ if $i = i'$, which further means that the $Y_i$ are locally coordinate vector fields and commute with one another. Moreover, by \eqref{coeffs} and the definition of $U_{j_1,\ldots,j_n}$, it must be the case that
\[ |Y_i f| \leq \sum_{j=1}^D | Y_i f_j| \sup_{x \in E} |f(x)| \leq 2D \sup_{x \in E} |f(x)|\]
at every point of $U_{j_1,\ldots,j_n}$. Furthermore
\begin{align*} 
\int_{E \cap U}  & | \mu(Y_1,\ldots,Y_n)|^{-1} d \mu \\
& =  \int_{E \cap U} | \mu(Y_1,\ldots,Y_n)|^{-1} \frac{d \mu}{|d f_{j_1} \wedge \cdots \wedge d f_{j_n}|} |d f_{j_1} \wedge \cdots \wedge d f_{j_n}| \\
& = \int_{E \cap U} \frac{1}{|(d f_{j_1} \wedge \cdots \wedge d f_{j_n})(Y_1,\ldots,Y_n)|} |d f_{j_1} \wedge \cdots \wedge d f_{j_n}| \\
& = \int_{E \cap U} |d f_{j_1} \wedge \cdots \wedge d f_{j_n}|,
\end{align*}
and by the change of variables formula, the last integral will be bounded above by the maximum number of nondegenerate solutions (i.e., solutions where the Jacobian determinant of the system is nonzero) of the system of equations 
\begin{equation}
f_{j_1}(x) = a_1,\ldots,f_{j_n}(x) = a_n \label{systems}
\end{equation} in $E \cap U$ for $a_1,\ldots,a_n \in [-1,1]$ since $|f_{j_i}(x)| \leq 1$ on $E$. Letting $S$ denote a uniform upper bound for this number of solutions, it follows from Tchebyshev's inequality that there is a measurable set $E' \subset E \cap U$ with $\mu(E') \geq \frac{1}{2} D^{-n} \mu(E)$ such that
\[ \mu(Y_1,\ldots,Y_n) \geq \frac{1}{2} D^{-n} S^{-1} \mu(E). \]
This completes the proof of Lemma \ref{multisystem} in the case $N=1$.

By induction, assume the lemma has been established up to some level $N-1$. For convenience, let the sets $E'$ and $U$ at stage $N-1$ be denoted $E_{N-1}$ and $U_{N-1}$, respectively. Suppose also that the lemma has been proved for some class of functions $\mathcal F_{N-1}$ which includes all polynomials of degree $d$. Stage $N$ follows by applying the already-established base case of the lemma to the space of functions $\mathcal F_{N}$ on $U_{N-1}$ which defined to be the span of $\mathcal F_{N-1}$ and $Y_i \mathcal F_{N-1}$, $i=1,\ldots,n$.  Postponing for the moment the problem of counting solutions of systems of equations during this induction procedure, it must be the case that for any $N$, there is an open set $U_N$ and some measurable $E_N \subset E$ such that $\mu(E_N \cap U_N) \gtrsim \mu(E)$ for some implicit constant depending on $(n,d,N)$ and there is a multisystem $\mso$ of size $N$, formed by extending the multisystem $\mso$ of size $N-1$ to add new vector fields $Y^{(N)}_j := Y_j$ defined by \eqref{vecdef} on $U_{N}$ as above.
For this extended multisystem, it must be the case that
\begin{equation} |Y^{(N)}_{j_N} \cdots Y^{(1)}_{j_1} f| \leq C_N \sup_{x \in E} |f(x)| \label{indhyp} \end{equation}
for all $j_1,\ldots,j_N$ and all $f \in \mathcal F_0$. Moreover, because each collection $Y^{(i)}_{1},\ldots,Y^{(i)}_{n}$ is locally given by coordinate vector fields with local coordinate functions which themselves belong to the finite-dimensional function space $\mathcal F_{N-1}$, it follows that
\[ Y^{(i+1)}_j = \sum_{\ell=1}^n (Y^{(i+1)}_j f_\ell) Y^{(i)}_\ell \]
when $f_1,\ldots,f_n$ are the functions used to construct the $Y^{(i)}_\ell$. In particular, the coefficients $|Y^{(i+1)}_j f_\ell|$ are bounded uniformly in $j$ and $\ell$ (and uniformly in $E$ and $\mu$). By induction, this implies that the final vector fields $Y^{(N)}_{j}$ are linear combinations of the $Y^{(i)}_\ell$ for $i < N$ with coefficients that are uniformly bounded. 
Because the vectors $Y^{(N)}_j$ may be written as linear combinations of all previous $Y^{(i)}_{\ell}$ with bounded linear coefficients, it follows from \eqref{indhyp} that
\[ \sup_{x \in E_N} |(Y^{(N)} \ms)^{\alpha} f(x)| \lesssim \sup_{x \in E} |f(x)| \]
with implicit constant independent of $\mu$ and $E$ whenever $\alpha$ is an ordered multiindex with $|\alpha| \leq N$.  Taking the vector fields $Y^{(N)}_1,\ldots,Y^{(N)}_n$ to be vector fields promised in the statement of the lemma together with $E' := E_N$ and $U := U_N$ completes the proof with the exception of the unfinished business of counting solutions of systems of equations.
\end{proof}

\subsection{Underlying geometry and solution counting}

The problem of counting solutions is an independent algebraic issue which has already been addressed elsewhere in the case of real analytic functions \cite{gressman2017}, so the reader who is not interested in the precise nature of the implicit constants in Theorem \ref{bestmeasthm} may skip the rest of this section and consider Theorem \ref{bestmeasthm} fully proved. For those who continue reading, there are two main purposes to this section. The first is to establish that the systems of equations encountered in the previous section have a bounded number of isolated solutions with an upper bound depending only on the constants $(n,d,N)$ as promised. The second major purpose of this section is to demonstrate that there is an intrinsic geometric object which governs the possible number of solutions. This means that a finite upper bound will continue to hold uniformly even when the functions $f$ belong, for example, to some o-minimal structure. This intrinsic geometric object is also closely related to certain geometric differential operators which were constructed some time ago to study uniform coordinate-independent sublevel set estimates \cite{gressman2010II}. In a very precise way, the object described below allows one to extend those earlier differential operators to a broader class which includes rational functions of the simpler objects.

Throughout this section, the open set $\Omega \subset \R^n$ and the polynomials of bounded degree on $\Omega$ will be regarded as simply an abstract smooth manifold $\mathcal M$ of dimension $n$ and a finite-dimensional vector space $\mathcal F$ of smooth functions on $\mathcal M$. Given such a pair $(\mathcal M, \mathcal F)$, a new pair $(\mathcal M', \mathcal F')$, representing a sort of abstract derivative of the original pair, is constructed as follows. 
 Let $\mathcal M'$ be the bundle $\Lambda^n_*(\mathcal M)$ of nonvanishing $n$-forms over points of $\mathcal M$, i.e., points of $\mathcal M'$ are nonvanishing $n$-forms $\omega_x$, where the subscript $x$ is used to indicate that $\omega_x$ acts as an alternating $n$-linear form on the tangent space at $x \in \mathcal M$.  Let $\mathcal F'$ be the vector space of smooth functions on $\mathcal M'$ spanned by the functions
\[ f(\omega_x) :=  f (x), \qquad f \in \mathcal F, \]
and
\[  \frac{\left. \left( df_{1} \wedge \cdots \wedge d f_{n} \right) \right|_{x}}{\omega_x}, \qquad f_1,\ldots,f_n \in \mathcal F. \]
The construction of $(\mathcal M', \mathcal F')$ allows one to extend the class of functions $\mathcal F$ to a broader class involving derivatives of the functions in $\mathcal F$ without constructing vector fields or coordinate systems. The cost of the construction is the change of dimension of $\mathcal M$ from $n$ to $n+1$, which roughly corresponds to including a new indeterminate variable. If $\mathcal M$ is the one-dimensional interval $(a,b)$, for example, then one can show that $\mathcal M$ is diffeomorphic to $(a,b) \times \R_{\neq 0}$ and $\mathcal F'$ is spanned by the functions $f(t)$ for $f \in \mathcal F$ and functions of the form $s f'(t)$ where $s \neq 0$ is the new indeterminate. In higher dimensions, the situation is somewhat more complex but still analogous.

Iterating the construction of $\mathcal M'$ and $\mathcal F'$ gives a sequence of manifolds $\mathcal M^{(i)}$ and function spaces $\mathcal F^{(i)}$ on $\mathcal M^{(i)}$, $i=0,\ldots,N$ (with $\mathcal M^{(0)} := \mathcal M$  and $\mathcal F^{(0)} := \mathcal F$). The spaces $\mathcal M^{(i)}$ have dimension $n + i$ and have fiber bundle projections $p_i$
\[ \mathcal M^{(i)} \stackrel{p_i}{\rightarrow} \mathcal M^{(i-1)} \stackrel{p_{i-1}}{\rightarrow} \cdots \stackrel{p_1} {\rightarrow} \mathcal M^{(0)}. \]
For convenience, let $\pi^{(i)}$ be the projection map $p_1 \circ \cdots \circ p_i$ from $\mathcal M^{(i)}$ to $\mathcal M^{(0)}$.
The space $\mathcal F^{(i)}$ is spanned by functions of the forms
\[ f (\omega_x) := (f \circ p_i) (\omega_x), \qquad f \in \mathcal F^{(i-1)} \]
and
\begin{equation}  
\left. d^{n+i-1}(f_1,\ldots,f_{n+i-1}) \right|_{\omega_x} := \frac{\left. \left( df_{1} \wedge \cdots \wedge d f_{{n+i-1}} \right) \right|_{x}}{\omega_x}, \label{multidet} \end{equation}
for $f_1,\ldots,f_{n+i-1} \in \mathcal F^{(i-1)}$.
For convenience, define $\dot{\mathcal F}^{(i)}$ to be the vector space of functions on $\mathcal M^{(i)}$ which are of the form \eqref{multidet} only. One may also also regard $\mathcal F^{(i-1)}$ to be a subspace of $\mathcal F^{(i)}$ by composing with the projection $p_i$.

The manifolds $\mathcal M^{(N)}$ completely capture the analysis and geometry of the vector fields $Y_j^{(i)}$ and the function spaces $\mathcal F_N$ constructed in Lemma \ref{multisystem}. In a practical sense, this is because the problem of counting solutions can be lifted from $\mathcal M$ to $\mathcal M^{(N)}$. This idea is formalized by the following lemma.
\begin{lemma}
Suppose $\mathcal F_0$ consists of a finite-dimensional vector space of smooth functions on $\mathcal M$. Let $\mathcal F_1,\ldots,\mathcal F_N$ be the vector spaces of functions as constructed in the proof of Lemma \ref{multisystem}, i.e., $\mathcal F_i$ is the span of $\mathcal F_{i-1}$ and $Y_{j} \mathcal F_{i-1}$, $j=1,\ldots,n$, for vector fields $Y_j$ defined as in \eqref{vecdef} for some $f_{j_1},\ldots,f_{j_n} \in \mathcal F_{i-1}$. Then the number of nondegenerate solutions $x \in U$ of the system \label{count}
\begin{equation} f_1(x) = a_1, \ldots, f_n(x) = a_n, \label{systems0} \end{equation}
where $f_1,\ldots,f_n \in \mathcal F_N$, $a_1,\ldots,a_n \in \R$, for a given open set $U$ is equal to the number of nondegenerate solutions $p \in (\pi^{(N)})^{-1}(U)$ of a corresponding system
\begin{equation} F_1(p) = b_1,\ldots, F_{n+N}(p) = b_{n+N}, \label{bigsys} \end{equation}
where $F_1,\ldots,F_{n+N} \in \mathcal F^{(N)}$, $b_1,\ldots,b_{n+N} \in \R$.
\end{lemma}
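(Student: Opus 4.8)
The plan is to prove the lemma by induction on $N$, the base case $N=0$ being trivial (there $\mathcal M^{(0)}=\mathcal M$, $\mathcal F^{(0)}=\mathcal F_0$, and the ``corresponding'' system is literally the given one). All the content is then in the inductive step, which I would carry out by peeling off the \emph{bottom} level of the tower. Concretely: one shows that $(\mathcal M^{(1)},\mathcal F^{(1)})$ may serve as a new base pair carrying a tower of length $N-1$ into which $\mathcal F_1,\dots,\mathcal F_N$ descend, and that the given system of $n$ equations on $U$ has the same number of nondegenerate solutions as a system of $n+1$ equations with $\mathcal F^{(1)}$-coefficients on $(\pi^{(1)})^{-1}(U)\subset\mathcal M^{(1)}$. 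Applying the inductive hypothesis over $(\mathcal M^{(1)},\mathcal F^{(1)})$ -- and noting that iterating the $(\mathcal M',\mathcal F')$ construction $N-1$ more times from $(\mathcal M^{(1)},\mathcal F^{(1)})$ reproduces exactly $(\mathcal M^{(N)},\mathcal F^{(N)})$, with composed projections equal to $\pi^{(N)}$ -- then yields a system of $n+N$ equations with $\mathcal F^{(N)}$-coefficients on $(\pi^{(N)})^{-1}(U)$, the solution count being preserved at each stage.

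For the bottom-level reduction, suppose the level-$1$ vector fields of Lemma \ref{multisystem} are built via \eqref{vecdef} from $g_1,\dots,g_n\in\mathcal F_0$, and set $B:=\frac{(dg_1\wedge\cdots\wedge dg_n)|_x}{\omega_x}\in\mathcal F^{(1)}$, the tautological denominator function on $\mathcal M^{(1)}$. Over $U$ the assignment $\sigma\colon x\mapsto (dg_1\wedge\cdots\wedge dg_n)|_x$ is a section of $\pi^{(1)}$ whose image is precisely $\Sigma:=\{B=1\}\cap(\pi^{(1)})^{-1}(U)$, a diffeomorphic copy of $U$. The two identities I would record first are: (i) a ratio $\frac{dh_1\wedge\cdots\wedge dh_n}{dh_1'\wedge\cdots\wedge dh_n'}$ of $n$-forms on $\mathcal M^{(1)}$ pulled back from $\mathcal M$ becomes, upon wedging numerator and denominator with $dB$, a ratio of $(n+1)$-forms -- matching $\dim\mathcal M^{(1)}$ -- and at a point of $\Sigma$ this wedged ratio depends on the $h_i,h_i'$ only through the differentials along $\Sigma$ of their pullbacks (because $dB$ annihilates $T\Sigma$); and (ii) the vector field of the form \eqref{vecdef} for $(\mathcal M^{(1)},\mathcal F^{(1)})$ built from $n+1$ functions whose last one is $B$ annihilates $B$, since the defining wedge then contains two copies of $dB$. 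With these, an inner induction on the level $i$ shows that every $h\in\mathcal F_i$ has a lift $\widehat h$ in the $(i-1)$-st level of the descended tower with $\widehat h|_\Sigma\circ\sigma=h$, and that $Y^{(i+1)}_j$ (built from $\tilde f_1,\dots,\tilde f_n\in\mathcal F_i$) agrees along $\Sigma$ with the \eqref{vecdef}-vector field for $(\mathcal M^{(1)},\mathcal F^{(1)})$ built from $\widehat{\tilde f_1},\dots,\widehat{\tilde f_n},B$; here (i) supplies the degree bookkeeping and (ii) is exactly what kills the would-be lower-order correction terms coming from restricting to $\{B=1\}$. Thus the descended tower is of the type admitted by Lemma \ref{multisystem}, and the lift $\widehat{(\cdot)}$ is well defined on $\Sigma$.

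It then remains to transfer the system. The original system $f_\ell=a_\ell$, $\ell=1,\dots,n$, on $U$ pulls back under $\sigma$ to $\widehat{f_\ell}|_\Sigma=a_\ell$, which is cut out of $(\pi^{(1)})^{-1}(U)$ by the $n+1$ equations $\widehat{f_1}=a_1,\dots,\widehat{f_n}=a_n,\ B=1$, all with $\mathcal F^{(1)}$-coefficients; any solution automatically lies on $\Sigma$ because of the last equation. Since $B$ is nonconstant along the one-dimensional fibers of $\pi^{(1)}$ over $U$ -- on a fiber $\omega_x=t\omega_0$ one has $B=c_0/t$ with $c_0\neq0$ -- the hypersurface $\{B=1\}$ is transverse to those fibers, so the $(n+1)\times(n+1)$ Jacobian of the lifted system at $p=\sigma(x)$ is nonzero exactly when the $n\times n$ Jacobian of $d(\widehat{f_1}|_\Sigma),\dots,d(\widehat{f_n}|_\Sigma)$ on $\Sigma$ is, which under $\sigma^*$ is precisely nondegeneracy of $x$ for the original system. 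Hence $\sigma$ is a bijection of nondegenerate solution sets, completing the inductive step.

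I expect the main obstacle to be organizational rather than conceptual: pinning down the descended tower and the lifts $\mathcal F_i\to(\text{level }i-1)$ so that the old and new vector fields really agree \emph{as functions on} $\Sigma$ (not merely in value), and excising the open sets on which the various wedge products vanish. These restrictions cost nothing, since the lemma only counts isolated, nondegenerate solutions, at which every Jacobian -- hence every wedge product in sight -- is automatically nonzero; but keeping the argument honest, in particular the claim that restricting to $\{B=1\}$ produces no correction terms by virtue of the auxiliary field annihilating $B$, is where the care is needed.
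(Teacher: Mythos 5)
You give a correct proof, organized by re-basing rather than by the paper's incremental slice construction. The paper fixes the base $\mathcal M$ and builds the submanifold $\mathcal M^{(N)}_F = \{F_1 = \cdots = F_N = 1\}$ one constraint at a time via Proposition~\ref{megacalc}, proving by induction on the tower level that each $\mathcal F_i$ lifts to $\mathcal F^{(i)}$ restricted to $\mathcal M^{(i)}_F$, and then finishing with a single Jacobian comparison; the counting claim is deduced at the end rather than being the inductive invariant. Your route instead makes the counting claim itself the thing proved by induction: peel off the bottom level, use the self-similarity of the $(\mathcal M', \mathcal F')$ construction to replace the base pair by $(\mathcal M^{(1)}, \mathcal F^{(1)})$, and invoke the inductive hypothesis over the new base. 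The computational heart is the same in both: your observation (i), that wedging numerator and denominator with $dB$ turns a ratio of $n$-forms on $\mathcal M^{(1)}$ into a ratio of top forms whose value along $\Sigma$ depends only on restrictions to $T\Sigma$, is exactly the row-reduction step yielding \eqref{corecalc}; and your observation (ii), that the auxiliary vector fields kill $B$, is the paper's $X_i F_j = 0$. What your recursion buys is that the iterative nature of $\mathcal M^{(N)}$ and $\mathcal F^{(N)}$ does the bookkeeping for you, rather than Proposition~\ref{megacalc} carrying the full tower of $F_j$'s in its statement; the cost is that you must set up carefully what it means for $\mathcal F_1, \ldots, \mathcal F_N$ to descend to a Lemma~\ref{multisystem}-type tower over $\mathcal M^{(1)}$, including the small bookkeeping point that $\dim \mathcal M^{(1)} = n+1$ means the descended tower ought to carry $n+1$ vector fields per level while your construction supplies only the $n$ fields tangent to $\Sigma$. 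This is harmless (adjoin the transverse $(n+1)$st field, or note that the lifts $\widehat{f_\ell}$ already lie in the smaller spans), but it is the sort of detail you rightly flag as needing care.
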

Although the manifold $\mathcal M^{(N)}$ is somewhat more abstract than $\mathcal M$ itself, Lemma \ref{count} is a significant result for two reasons. The first is that it allows one to sidestep inherent difficulties of understanding the vector fields $Y_i$ when counting solutions. The second is that the functions in $\mathcal F^{(N)}$ are never more complex than derivatives of the functions in $\mathcal F$ and polynomials, as shown by the following proposition:
\begin{proposition}
Suppose that $\varphi$ is a diffeomorphism from some open set $U \subset \R^n$ onto some open subset of $\mathcal M$. For each $N$, there is a diffeomorphism $\varphi^{(N)}$ from $U \times \R_{\neq 0}^{N}$ onto $(\pi^{(N)})^{-1} (\varphi(U))$ such that for every  $F_1,\ldots,F_{n+N-1} \in \mathcal F^{(N-1)}$,
\begin{equation} 
\begin{split}
& \left. d^{n+N-1}(F_1,\ldots,F_{n+N-1}) \right|_{\varphi^{(N)}(x,t_1,\ldots,t_{N})} \\
 & \qquad \qquad \qquad =  t_1 \cdots t_{N} \det \frac{\partial (F_1,\ldots,F_{n+N-1})}{\partial (x,t_1,\ldots,t_{N-1})},
\end{split}
\label{coordform} 
\end{equation}
 where the determinant on the right-hand side is the usual Jacobian determinant in the coordinates $(x,t_1,\ldots,t_{N-1}) \in U \times \R^{N-1}_{\neq 0}$. \end{proposition}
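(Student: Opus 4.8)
The plan is to argue by induction on $N$, at each stage trivializing the top-form bundle $p_N:\mathcal M^{(N)}\to\mathcal M^{(N-1)}$ against a suitably reweighted reference volume form and then reading \eqref{coordform} straight off the defining formula \eqref{multidet}. For the base case $N=1$, observe that $\varphi$ identifies the Euclidean volume $dx_1\wedge\cdots\wedge dx_n$ on $U$ with a nowhere-vanishing $n$-form $\Theta^{(0)}$ on $\varphi(U)\subset\mathcal M$; since the fibers of $\Lambda^n_*(\mathcal M)\to\mathcal M$ are one-dimensional punctured lines, every point of $(\pi^{(1)})^{-1}(\varphi(U))$ is uniquely $t_1^{-1}\Theta^{(0)}_{\varphi(x)}$ for some $(x,t_1)\in U\times\R_{\neq 0}$, and I would declare this assignment to be $\varphi^{(1)}$. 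It is manifestly a diffeomorphism onto $(\pi^{(1)})^{-1}(\varphi(U))$ commuting with $p_1$. By \eqref{multidet}, $d^{n}(F_1,\ldots,F_n)|_{\varphi^{(1)}(x,t_1)}$ is the ratio at $\varphi(x)$ of $dF_1\wedge\cdots\wedge dF_n$ to $t_1^{-1}\Theta^{(0)}$; pulling this ratio back along $\varphi$ (ratios of top forms are invariant under pullback by diffeomorphisms) turns the numerator into $\det[\partial(F_j\circ\varphi)/\partial x]\,dx_1\wedge\cdots\wedge dx_n$ and the denominator into $t_1^{-1}dx_1\wedge\cdots\wedge dx_n$, yielding $t_1\det\frac{\partial(F_1,\ldots,F_n)}{\partial x}$, which is \eqref{coordform} for $N=1$ (the coordinates $(x,t_1,\ldots,t_{N-1})$ degenerating to $x$ and the product $t_1\cdots t_N$ to $t_1$).

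For the inductive step, given $\varphi^{(N-1)}:U\times\R^{N-1}_{\neq 0}\to(\pi^{(N-1)})^{-1}(\varphi(U))$, I would let $\Theta^{(N-1)}$ be the unique $(n+N-1)$-form on $(\pi^{(N-1)})^{-1}(\varphi(U))\subset\mathcal M^{(N-1)}$ which pulls back under $\varphi^{(N-1)}$ to the reweighted top form $(t_1\cdots t_{N-1})^{-1}\,dx_1\wedge\cdots\wedge dx_n\wedge dt_1\wedge\cdots\wedge dt_{N-1}$, and then set $\varphi^{(N)}(x,t_1,\ldots,t_N):=t_N^{-1}\,\Theta^{(N-1)}_{\varphi^{(N-1)}(x,t_1,\ldots,t_{N-1})}$. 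Being a fiberwise nonzero rescaling over $\varphi^{(N-1)}$, this is a diffeomorphism of $U\times\R^N_{\neq 0}$ onto $p_N^{-1}$ of the image of $\varphi^{(N-1)}$, which is exactly $(\pi^{(N)})^{-1}(\varphi(U))$ since $\pi^{(N)}=\pi^{(N-1)}\circ p_N$; moreover it commutes with $p_N$, so the tower of charts is compatible with the tower of projections. Applying \eqref{multidet} at level $N$ expresses $d^{n+N-1}(F_1,\ldots,F_{n+N-1})|_{\varphi^{(N)}(x,t_1,\ldots,t_N)}$ as $t_N$ times the ratio, at $\varphi^{(N-1)}(x,t_1,\ldots,t_{N-1})$, of $dF_1\wedge\cdots\wedge dF_{n+N-1}$ to $\Theta^{(N-1)}$; pulling this ratio back along $\varphi^{(N-1)}$ replaces the numerator by $\det\frac{\partial(F_1,\ldots,F_{n+N-1})}{\partial(x,t_1,\ldots,t_{N-1})}\,dx_1\wedge\cdots\wedge dt_{N-1}$ and the denominator by $(t_1\cdots t_{N-1})^{-1}dx_1\wedge\cdots\wedge dt_{N-1}$, so the ratio equals $t_1\cdots t_{N-1}$ times that Jacobian determinant, and the whole expression collapses to $t_1\cdots t_N\det\frac{\partial(F_1,\ldots,F_{n+N-1})}{\partial(x,t_1,\ldots,t_{N-1})}$, which is \eqref{coordform}. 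Nothing in this computation uses $F_j\in\mathcal F^{(N-1)}$: that hypothesis only guarantees that the left-hand side of \eqref{coordform} lies in $\mathcal F^{(N)}$.

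The step I expect to need the most care is the bookkeeping of the fiber-coordinate normalizations. The clean product $t_1\cdots t_N$ on the right of \eqref{coordform} appears precisely because at stage $i$ one must trivialize $\Lambda^{n+i-1}_*(\mathcal M^{(i-1)})$ against the reweighted form $(t_1\cdots t_{i-1})^{-1}\,dx_1\wedge\cdots\wedge dt_{i-1}$ rather than against the naive Euclidean volume of the chart $\varphi^{(i-1)}$; the naive choice would contribute only a single factor $t_i$ at each stage. Getting this reweighting right through the induction, together with the (routine) verification that $\varphi^{(N)}$ so defined is genuinely a diffeomorphism onto $(\pi^{(N)})^{-1}(\varphi(U))$, are the only points requiring attention — everything else is immediate from diffeomorphism-invariance of ratios of top forms and the change-of-variables expansion $d(G_1\circ\varphi^{(N-1)})\wedge\cdots\wedge d(G_{n+N-1}\circ\varphi^{(N-1)})=\det\frac{\partial(G_1,\ldots,G_{n+N-1})}{\partial(x,t_1,\ldots,t_{N-1})}\,dx_1\wedge\cdots\wedge dt_{N-1}$.
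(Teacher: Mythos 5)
Your proof is correct and follows essentially the same approach as the paper: the paper defines $\varphi^{(N)}(x,t_1,\ldots,t_N)$ directly as the $(n+N-1)$-form $\frac{dx_1\wedge\cdots\wedge dx_n}{t_N}\wedge\frac{dt_1}{t_1}\wedge\cdots\wedge\frac{dt_{N-1}}{t_{N-1}}$ at $\varphi^{(N-1)}(x,t_1,\ldots,t_{N-1})$, which is precisely your $t_N^{-1}\Theta^{(N-1)}$, and then reads \eqref{coordform} off the definition \eqref{multidet} by the same change-of-variables computation. Your $\Theta^{(N-1)}$ notation and the explicit appeal to pullback-invariance of ratios of top forms are just a more conceptually packaged version of the paper's ``easy to check in these coordinates'' step.
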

\begin{proof}
By induction on $N$,  let $\varphi^{(N)}$ be given by
\[ \varphi^{(N)}(x,t_1,\ldots,t_N) := \left. \frac{d x_1 \wedge \cdots \wedge dx_n}{t_N} \wedge \frac{d t_1}{t_1} \wedge  \cdots \wedge \frac{d t_{N-1}}{t_{N-1}} \right|_{\varphi^{(N-1)}(x,t_1,\ldots,t_{N-1})}, \]
where $dx_1, \ldots, dx_n$ are differentials of the coordinate functions $x_1,\ldots,x_n$ on $\varphi^{-1}(U)$ induced by $\varphi$.
As can be seen from the formula, these coordinates have the property that the canonical projection from $\mathcal M^{(N)}$ to $\mathcal M^{(N-1)}$ corresponds to dropping the variable $t_{N}$.  It is easy to check in these coordinates that
\begin{align*}
d F_1 & \wedge \cdots \wedge d F_{n+N-1} \\ &
= \left[ \det \frac{\partial (F_1,\ldots,F_{n+N-1})}{\partial (x,t_1,\ldots,t_{N-1})} \right] dx_1 \wedge \cdots \wedge dx_n \wedge dt_1 \wedge \cdots \wedge dt_{N-1} \\
& = t_1 \cdots t_{N} \left[ \det \frac{\partial (F_1,\ldots,F_{n+N-1})}{\partial (x,t_1,\ldots,t_{N-1})} \right] \varphi^{(N)}(x,t_1,\ldots,t_{N})
 \end{align*}
 for any $F_1,\ldots,F_{n+N-1} \in \mathcal F^{(N-1)}$. Definition \eqref{multidet} immediately gives \eqref{coordform}.
\end{proof}
An important corollary is that when the functions $\mathcal F$ are polynomials of bounded degree in a suitable coordinate system (as will always be the case when applying the result to Lemma \ref{multisystem}), the functions $\mathcal F^{(N)}$ may also be regarded as polynomials of a suitably bounded degree in the appropriate coordinates as well. Thus the number of nondegenerate solutions to the system \eqref{bigsys} would immediately be bounded by B\'{e}zout's Theorem just as applied in the proof of Theorem \ref{radon}.

The proof of Lemma \ref{count} proceeds by showing that every function $f \in \mathcal F_N$ (the function space analogous to Lemma \ref{multisystem}) must agree with a function in $\mathcal F^{(N)}$ (the function space on $\mathcal M^{(N)}$) on a suitably-constructed $n$-dimensional submanifold of $\mathcal M^{(N)}$ which is defined implicitly via a system of equations in $\mathcal F^{(N)}$. This implies that the system of equations \eqref{systems0} involving the somewhat mysteriously-constructed functions $f_{j_1},\ldots,f_{j_n}$ can be naturally lifted to an system on $\mathcal M^{(N)}$ where the functions in the system belong to $\mathcal F^{(N)}$.
Because both $\mathcal F_N$ and $\mathcal F^{(N)}$ are vector spaces, the only part of this assertion which is somewhat cumbersome to prove is that ratios of wedge products {\it a la} \eqref{vecdef} appear as values of functions in $\mathcal F^{(N)}$ restricted to suitable submanifolds. This is accomplished by a trivial induction on $N$ combined with the following proposition, which shows how to identify quantities like \eqref{vecdef} via the identity \eqref{ratio} and also demonstrates in \eqref{snp1} how to inductively identify the $n$-dimensional submanifold of $\mathcal M^{(N)}$ on which the desired identities hold.
\begin{proposition}
Suppose $F_j \in \dot{\mathcal F}^{(j)}$ for each $j=1,\ldots,N$ and let \label{megacalc}
\[ {\mathcal M}^{(N)}_F :=  \set{ p \in \mathcal M^{(N)}}{ F_1(p) = \cdots = F_N(p) = 1}. \]
Then
\begin{enumerate}
\item The set ${\mathcal M}^{(N)}_F$ is a manifold and the projection $\pi^{(N)}$ is a diffeomorphism of any open subset of ${\mathcal M}^{(N)}_F$ and its image. 
\end{enumerate}
Next suppose that $h_1,\ldots,h_n$ and $g_1,\ldots,g_n$ are smooth functions on some open subset $O \subset \mathcal M$ for which there exist $H_1,\ldots,H_n, G_1,\ldots, G_n \in \mathcal F^{(N)}$ such that for each $j = 1,\ldots,n$, $H_j$ restricts to $h_j$ on ${\mathcal M}^{(N)}_F \cap (\pi^{(N)})^{-1} (O)$ and likewise for $G_j$ and $g_j$. In other words, $h_j \circ \pi^{(N)} = H_j$ on ${\mathcal M}^{(N)}_F \cap (\pi^{(N)})^{-1} (O)$ and $g_j \circ \pi^{(N)} = G_j$ on ${\mathcal M}^{(N)}_F \cap (\pi^{(N)})^{-1} (O)$ for each $j=1,\ldots,n$. If one defines
\begin{equation} F_{N+1} := d^{n+N} (G_1,\ldots,G_n,F_1,\ldots,F_N), \label{snp1} \end{equation}
the following must also be true:
\begin{enumerate}
\item[2.] The image $\pi^{(N+1)}( \mathcal M^{(N+1)}_F) \cap O \subset \mathcal M$ consists of exactly those points in $\pi^{(N)}(\mathcal M^{(N)}_F) \cap O$ at which $d g_1 \wedge \cdots \wedge d g_n \neq 0$.

\item[3.] There is a function in $\mathcal F^{(N+1)}$ which restricts to $$ \frac{d h_1 \wedge \cdots \wedge dh_n}{ d g_1 \wedge \cdots \wedge d g_n}$$ at every point of $O$ where the denominator is nonzero, namely
\begin{equation} \frac{ d h_1 \wedge \cdots \wedge dh_n}{d g_1 \wedge \cdots \wedge d g_n} \circ \pi^{(N+1)} = d^{n+N} ( H_1,\ldots,H_n, F_1,\ldots,F_{N}) \label{ratio} \end{equation}
on ${\mathcal M}^{(N+1)}_F \cap (\pi^{(N+1)})^{-1} (O)$.
\end{enumerate}
\end{proposition}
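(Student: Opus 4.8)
The approach I would take exploits that the construction is tautological: a point of $\mathcal M^{(i)}$ is literally a nonvanishing top form $\omega_x$ on $T_x\mathcal M^{(i-1)}$, so for $F_i=d^{n+i-1}(f_1,\dots,f_{n+i-1})\in\dot{\mathcal F}^{(i)}$ the equation $F_i(\omega_x)=1$ says exactly that $\omega_x=(df_1\wedge\cdots\wedge df_{n+i-1})|_x$. I would prove the first claim by induction on $N$. Write $F'=(F_1,\dots,F_{N-1})$. Since $\mathcal M^{(N)}$ is the complement of the zero section in a real line bundle over $\mathcal M^{(N-1)}$ with projection $p_N$, and $F_1,\dots,F_{N-1}$ are pulled back along $p_N$, the locus $\{F_1=\cdots=F_{N-1}=1\}$ inside $\mathcal M^{(N)}$ is $p_N^{-1}(\mathcal M^{(N-1)}_{F'})$, the restriction of this line bundle to the submanifold $\mathcal M^{(N-1)}_{F'}$ furnished by the inductive hypothesis (on which $\pi^{(N-1)}$ is a diffeomorphism onto an open subset of $\mathcal M$). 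Imposing $F_N=1$ forces $\omega_y$ to equal the value at $y$ of the smooth section $y\mapsto(df_1\wedge\cdots\wedge df_{n+N-1})|_y$ of that line bundle, so $\mathcal M^{(N)}_F$ is the image of this section over the open set where it is nonvanishing: an embedded $n$-dimensional submanifold on which $p_N$, hence $\pi^{(N)}=\pi^{(N-1)}\circ p_N$, is a diffeomorphism onto an open subset of $\mathcal M$. Using the coordinate description of the preceding proposition---in which $F_j=t_1\cdots t_j$ times a function of $(x,t_1,\dots,t_{j-1})$, so $(\partial F_j/\partial t_\ell)_{j,\ell}$ is lower triangular with diagonal $1/t_j\neq0$ on $\mathcal M^{(N)}_F$---one sees that $dF_1,\dots,dF_N$ are pointwise linearly independent along $\mathcal M^{(N)}_F$, whence $T_x\mathcal M^{(N)}_F=V_x:=\bigcap_{j=1}^N\ker dF_j|_x$.

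Claims 2 and 3 both rest on the elementary fact that wedging with $dF_1\wedge\cdots\wedge dF_N$ computes restriction to the submanifold. Fix a basis $v_1,\dots,v_n$ of $V_x$ and $w_1,\dots,w_N$ of a complement $W_x$ with $dF_j(w_\ell)=\delta_{j\ell}$; for any smooth $K_1,\dots,K_n$ near $x\in\mathcal M^{(N)}_F$, in the shuffle expansion of $(dK_1\wedge\cdots\wedge dK_n\wedge dF_1\wedge\cdots\wedge dF_N)(v_1,\dots,v_n,w_1,\dots,w_N)$ every term feeding some $v_i$ to $dF_1\wedge\cdots\wedge dF_N$ vanishes, leaving only $(dK_1\wedge\cdots\wedge dK_n)(v_1,\dots,v_n)$. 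Hence, for a second tuple $L_1,\dots,L_n$,
\[
\frac{(dK_1\wedge\cdots\wedge dK_n\wedge dF_1\wedge\cdots\wedge dF_N)|_x}{(dL_1\wedge\cdots\wedge dL_n\wedge dF_1\wedge\cdots\wedge dF_N)|_x}=\frac{(dK_1\wedge\cdots\wedge dK_n)|_{V_x}}{(dL_1\wedge\cdots\wedge dL_n)|_{V_x}},
\]
a ratio of top forms on the $n$-dimensional space $T_x\mathcal M^{(N)}_F$. Taking $K_j=H_j$ and $L_j=G_j$, the hypothesis that $H_j,G_j$ agree with $h_j\circ\pi^{(N)},g_j\circ\pi^{(N)}$ on the open subset $\mathcal M^{(N)}_F\cap(\pi^{(N)})^{-1}(O)$ of $\mathcal M^{(N)}_F$ makes $dH_j|_{V_x},dG_j|_{V_x}$ the pullbacks of $dh_j,dg_j$ under the isomorphism $d\pi^{(N)}_x$; since this pullback commutes with ratios of top forms, the right-hand side equals $(dh_1\wedge\cdots\wedge dh_n)/(dg_1\wedge\cdots\wedge dg_n)$ at $\pi^{(N)}(x)$.

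With this in hand, claims 2 and 3 follow by unwinding definitions. A point of $\mathcal M^{(N+1)}_F$ over $x\in\mathcal M^{(N)}$ is a nonvanishing top form $\omega_x$ on $T_x\mathcal M^{(N)}$ satisfying $F_1(\omega_x)=\cdots=F_N(\omega_x)=1$---which forces $x\in\mathcal M^{(N)}_F$, as these $F_j$ are pulled back from $\mathcal M^{(N)}$---together with $F_{N+1}(\omega_x)=1$, i.e., $\omega_x=(dG_1\wedge\cdots\wedge dG_n\wedge dF_1\wedge\cdots\wedge dF_N)|_x$ by the definition of $F_{N+1}$. Such $\omega_x$ exists, and is then unique, exactly when this $(n+N)$-form is nonzero, which by the identity above holds precisely when $dg_1\wedge\cdots\wedge dg_n\neq0$ at $\pi^{(N)}(x)=\pi^{(N+1)}(\omega_x)$: this is claim 2. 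For such $\omega_x$ lying over $O$, the definition of $d^{n+N}$ together with $\omega_x=(dG_1\wedge\cdots\wedge dG_n\wedge dF_1\wedge\cdots\wedge dF_N)|_x$ gives
\[
d^{n+N}(H_1,\dots,H_n,F_1,\dots,F_N)(\omega_x)=\frac{(dH_1\wedge\cdots\wedge dH_n\wedge dF_1\wedge\cdots\wedge dF_N)|_x}{(dG_1\wedge\cdots\wedge dG_n\wedge dF_1\wedge\cdots\wedge dF_N)|_x},
\]
which the identity above equates with $(dh_1\wedge\cdots\wedge dh_n)/(dg_1\wedge\cdots\wedge dg_n)$ at $\pi^{(N+1)}(\omega_x)$; this is claim 3. (That $d^{n+N}(H_1,\dots,H_n,F_1,\dots,F_N)$ lies in $\dot{\mathcal F}^{(N+1)}$ is immediate, being $d^{n+N}$ of $n+N$ functions in $\mathcal F^{(N)}$.)

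I expect the main obstacle to be organizational bookkeeping: tracking which function lives on which $\mathcal M^{(i)}$, using $F_j\in\dot{\mathcal F}^{(j)}\subset\mathcal F^{(N)}\subset\mathcal F^{(N+1)}$ via the projections, and holding the tautological identification ``point of $\mathcal M^{(i)}$ $=$ top form'' firmly in place throughout. The one genuinely delicate mathematical point is the companion assertion in the first claim that $dF_1,\dots,dF_N$ remain linearly independent all along $\mathcal M^{(N)}_F$: this is exactly what forces $T_x\mathcal M^{(N)}_F=\bigcap_j\ker dF_j|_x$ and legitimizes reading ``wedge with the $dF_j$'' as ``restrict to the submanifold'', and it is where the triangular structure of the $F_j$ in the fiber coordinates $t_1,\dots,t_N$ is essential.
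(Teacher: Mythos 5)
Your argument is correct, and it differs from the paper's in a way worth noting. The paper works in the local coordinates $\varphi^{(N)}$ throughout: it proves part 1 by showing that the Jacobian $\partial(F_1,\ldots,F_N)/\partial(t_1,\ldots,t_N)$ is lower triangular with nonzero diagonal (via $\partial F_j/\partial t_j = F_j/t_j$) and invoking the implicit function theorem, then proves parts 2 and 3 by the formula $d^{n+N}(G_1,\ldots,G_n,F_1,\ldots,F_N) = t_1\cdots t_{N+1}\,\det\tfrac{\partial(G,F)}{\partial(x,t)}$ together with the row operation of replacing each $\partial/\partial x_i$ by the lifted coordinate vector field $X_i$ (annihilating the $F$-rows in the $x$-block since $X_iF_j = 0$ on $\mathcal M^{(N)}_F$), which factors the determinant as $t_{N+1}\det(\partial G/\partial X)$ and ultimately as $t_{N+1}\det(\partial g/\partial x)$. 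You reach the same conclusion by a coordinate-free route: you read a point of $\mathcal M^{(i)}$ tautologically as a top form, so $F_i = 1$ pins $\omega_y$ to the value of an explicit section, giving part 1 as the statement that $\mathcal M^{(N)}_F$ is the graph of a section of a line bundle over the previous stage; and you replace the row-operation computation by the Laplace/shuffle expansion, which says directly that wedging against $dF_1\wedge\cdots\wedge dF_N$ computes restriction of $n$-forms to $V_x = \bigcap_j\ker dF_j|_x = T_x\mathcal M^{(N)}_F$. The two computations are in the end the same determinant identity, but yours makes the role of the constraint forms $dF_j$ and of the identification $T_x\mathcal M^{(N)}_F = V_x$ transparent without choosing a chart, at the modest cost of having to argue separately (as you do, via the same triangular structure) that the $dF_j$ are pointwise independent along $\mathcal M^{(N)}_F$. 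You correctly flag this independence as the one genuinely delicate point, since without it the shuffle expansion would not isolate the restriction to the tangent space of $\mathcal M^{(N)}_F$.
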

\begin{proof} 
 From the formula \eqref{coordform} in the coordinates $\varphi^{(N)}$ on $\mathcal M^{(N)} \cap (\pi^{(N)})^{-1}(U)$, it is clear that every $F_j \in \dot {\mathcal F}^{(j)}$ must equal $t_1 \cdots t_j$ times a polynomial in $(t_1,\ldots,t_{j-1})$ with coefficients that are smooth functions of $x$. There are several important consequences of this simple observation. The first is that $F_j$ is independent of $t_k$ when $k > j$. When $k = j$, it also follows that
 \begin{equation} \frac{\partial F_j}{\partial t_j} = \frac{1}{t_j} F_j. \label{tricalc} \end{equation}
 This means that the Jacobian matrix $\partial (F_1,\ldots,F_N) / \partial (t_1,\ldots,t_N)$ always has full rank at every point of $\mathcal M_F^{(N)}$ since the Jacobian matrix it is triangular and its diagonal entries are never zero (since $F_j = 1$ on $\mathcal M^{(N)}_F$ for each $j$ and by assumption $t_j \neq 0$ for each $j$ as well). By the Implicit Function Theorem, this guarantees that $\mathcal M^{(N)}_F$ is always a manifold regardless of the choice of the particular $F_j$'s. Moreover, because of this triangular structure and the linearity of $F_j$ as a function of $t_j$, it is easy to see that for a given $(x,t_1,\ldots,t_i) \in \mathcal M^{(i)}_F$, there is at most a unique value of $t_{i+1}$ such that $(x,t_1,\ldots,t_{i+1}) \in \mathcal M^{(i+1)}_F$, and such a solution exists if and only if $F_{i+1}(x,t_1,\ldots,t_i,t)$ is not an identically zero function of $t$. As already noted, if such a value of $t_{i+1}$ exists, it is necessarily true that the Jacobian determinant $\det \partial (F_1,\ldots,F_{i+1}) / \partial (t_1,\ldots,t_{i+1})$ must be nonvanishing at $(x,t_1,\ldots,t_{i+1})$. Therefore by the Implicit Function Theorem, the projection $\pi^{(N)}$ must be a diffeomorphism of any open subset of $\mathcal M^{(N)}_F$ and its image. This establishes the first conclusion of the proposition.
 
 Because $\pi^{(N)}$ is a diffeomorphism of any open subset of $\mathcal M^{(N)}_F$ and its image, one may define coordinates on $\mathcal M^{(N)}_F \cap (\pi^{(N)})^{-1}(U)$ using $\varphi$ by lifting the coordinate function $\varphi$ via $(\pi^{(N)})^{-1}$, i.e., by mapping $x \in U \cap \varphi^{-1} \pi^{(N)}(\mathcal M^{(N)}_F)$ to $(\pi^{(N)})^{-1} ( \varphi(x))$, where $U$ is any suitable open subset of $\mathcal M$ on which a coordinate system $\varphi$ is defined. Let $X_1,\ldots,X_n$ denote the associated coordinate vector fields. It follows that $d \pi^{(N)}(X_i) = \partial / \partial x_i$ for each $i=1,\ldots,n$. In the coordinates $\varphi^{(N)}$ on $\mathcal M^{(N)}$, this means that
 \[ X_i := \frac{\partial}{\partial x_i} + \sum_{j=1}^N c_{ij}(x,t) \frac{\partial}{\partial t_j} \]
 for each $i=1,\ldots,n$. Since each $F_j$ is constant on $\mathcal M^{(N)}_F$, it must be the case that $X_i F_j = 0$ on $\mathcal M^{(N)}_F$ for each pair of indices $i,j$. Therefore by applying the usual row operations to the Jacobian determinant \eqref{coordform} (assuming that distinct rows of the matrix correspond to partial derivatives with respect to distinct coordinate variables), it must be the case that
 \begin{align} d^{n+N}(G_1,\ldots,G_n,F_1,\ldots,F_N) & = t_1 \cdots t_{N+1} \left[ \det \frac{\partial G}{\partial X} \right] \left[ \det \frac{\partial F}{\partial (t_1,\ldots,t_N)} \right] \nonumber \\ & = t_{N+1} \left[ \det \frac{\partial G}{\partial X} \right]  \label{corecalc} \end{align}
 on $\mathcal M^{(N)}_F$ (using the triangular structure of $\partial F / \partial t$ and \eqref{tricalc}). If it is also known that $G_j$ restricts to $g_j$ on $\mathcal M^{(N)}_F \cap (\pi^{(N)})^{-1}(O)$, then $X_i G_j = X_i ( g_j \circ \pi^{(N)}) = (d \pi^{(N)}(X_i) g_j) \circ \pi^{(N)} = (\partial g_j / \partial x_i) \circ \pi^{(N)}$, so 
 \begin{equation} d^{n+N}(G_1,\ldots,G_n,F_1,\ldots,F_N) = t_{N+1}  \left[ \det \frac{\partial g}{\partial x} \right]  \label{coordform2} \end{equation}
 in the coordinates $(x,t_1,\ldots,t_{N+1})$ when $(x,t_1,\ldots,t_N) \in \mathcal M^{(N)}_F \cap (\pi^{(N)})^{-1}(U)$.
 
 Now assuming that $F_{N+1}$ is selected in such a way that \eqref{snp1} holds, it follows that for a given point $(x,t_1,\ldots,t_N) \in \mathcal M^{(N)}_F \cap (\pi^{(N)})^{-1}(U)$, the equation $F_{N+1}(x,t_1,\ldots,t_{N+1}) = 1$ will have a solution $t_{N+1}$ if and only if $\det (\partial g / \partial x) \neq 0$ at the point $x \in U$, which will occur exactly when $d g_1 \wedge \cdots \wedge d g_n \neq 0$. Because every point of $O$ is contained in an open set $U$ on which a coordinate system is defined, this forces the second conclusion of the proposition to be true, namely, that $\pi^{(N+1)}(\mathcal M_F^{(N+1)}) \cap O$ will be exactly the subset of $\pi^{(N)}(\mathcal M_F^{(N)}) \cap O$ at which $dg_1 \wedge \cdots \wedge dg_n \neq 0$.

 As for the third conclusion of the proposition, assuming that $x \in U$ is a point at which $d g_1 \wedge \cdots \wedge d g_n \neq 0$ and that $(x,t_1,\ldots,t_N) \in \mathcal M_F^{(N)}$, 
 \[ d^{n+N}( H_1,\ldots,H_n,F_1,\ldots,F_N) = t_{N+1} \left[ \det \frac{\partial h}{\partial x} \right] = \frac{\det \frac{\partial h}{\partial x}}{\det \frac{\partial g}{\partial x}} = \frac{d h_1 \wedge \cdots \wedge dh_n}{ d g_1 \wedge \cdots \wedge d g_n} \]
 assuming $1 = t_{N+1} \det ( \partial g / \partial x)$, which must be the case when $(x,t_1,\ldots,t_{N+1}) \in \mathcal M^{(N+1)}_F$. Because $U$ was arbitrary, the formula holds on all of $O$ as well.
 \end{proof}

The proof of Lemma \ref{count} follows quickly from Proposition \ref{megacalc}.
By induction on $N$, once it is known that there are suitable $F_i \in \dot{\mathcal F}^{(i)}$ for $i=1,\ldots,N$ such that every function $g \in \mathcal F_N$ of the form
$$Y^{(N)}_{j_N} \cdots Y^{(1)}_{j_1} f$$
for $f \in \mathcal F$ has a corresponding function $G$ in $\mathcal M^{(N)}$ which restricts to $g$  on $\mathcal M^{(N)}_F$, the third conclusion of the proposition establishes that the same property must hold at stage $N+1$ as well. This is because the functions $f_{j_1},\ldots,f_{j_n}$ in the denominator of \eqref{vecdef} defining the new vector fields $Y^{(N+1)}_i$ belong to the span of $\mathcal F_{N}$ and $Y_i^{(N)} \mathcal F_{N}$, which means by induction that each such function is the restriction to $\mathcal M^{(N)}_F$ of a function in $\mathcal F^{(N)}$. These extended functions define $F_{n+1}$ via \eqref{snp1}. The key point is that the vector fields $Y^{(N+1)}_1,\ldots,Y^{(N+1)}_n$ all have the same denominator, so the same choice of $F_{N+1}$ defining $\mathcal M^{(N+1)}_F$ works simultaneously for the application of any one of the vector fields $Y^{(N+1)}_i$ via the identity \eqref{ratio}.

A consequence of this observation is that when $H_1,\ldots,H_n \in \mathcal F^{(N)}$ restrict to $h_1,\ldots,h_n$ on some open subset of $\mathcal M_F^{(N)} \cap (\pi^{(N)})^{-1}(O)$, then every solution of the system of equations
\[ h_i(x) = a_i, \ \   i=1,\ldots,n, \]
for $x \in O$ will correspond to a solution of the augmented system
\[ H_i(x,t_1,\ldots,t_N) = a_i, \ \ i=1,\ldots,n, \mbox{ and } F_j(x,t_1,\ldots,t_N) = 1, \ \ j =1,\ldots,N, \]
in $\mathcal M^{(N)} \cap (\pi^{(N)})^{-1}(O)$ (in the sense that $(\pi^{(N)})^{-1}$ will map solutions in $O$ injectively to solutions in $\mathcal M^{(N)} \cap (\pi^{(N)})^{-1}(O)$ of the augmented system) and that the mapping preserves nondegeneracy in the sense that $\det (\partial h / \partial x) \neq 0$ for a solution point in $O$ if and only if $\det (\partial (H_1,\ldots,H_n,F_1,\ldots,F_N) / \partial (x,t_1,\ldots,t_N)) \neq 0$. This latter observation follows immediately from the equality of \eqref{coordform} (when fixing $(G_1,\ldots,G_{n+N}) := (H_1,\ldots,H_n,F_1,\ldots,F_N)$) and \eqref{coordform2}. Thus Lemma \ref{count} must be true. This completes the proof of Lemma \ref{count} and consequently the proofs of Lemma \ref{multisystem} and Theorems \ref{bestmeasthm} and \ref{betterthm} as well.

\section{Further applications to Radon-like operators}
\label{examples2}
To close, it is illuminating to return to the context of averaging operators \eqref{theop} of Theorem \ref{radon} and explicitly see how Theorem \ref{bestmeasthm} applies, as was abstractly indicated by Example 4 in Section \ref{examples1}. For convenience, it will be assumed that the map $\gamma(t,x)$ has the form
\[ \gamma(t,x) := (t , \gamma_0(t,x)) \]
where $\gamma_0 : \R^n \times \R^{N_2} \rightarrow \R^r$ for some integer $r$ (in which case $N_1 := n + r$) and $N_2 = rk$ for some integer $k \geq 2$. A short calculation gives that
\[ \omega(t,x) = (-1)^{nr} \sum_{1 \leq i_1 < \cdots < i_r \leq rk} \det  \left[ \! \begin{array}{ccc} \frac{\partial \gamma_0}{\partial x_{i_1}} (t,x) & \! \! \cdots & \! \! \frac{\partial \gamma_0}{\partial x_{i_r}} (t,x)  \end{array} \! \! \right] dx_{i_1} \wedge \cdots \wedge dx_{i_r} \]
because the determinants in the original definition \eqref{normalform} have block structure in the first $n$ rows and last $n$ columns. If the coordinates of $\gamma_0$ are labelled $(\gamma_0)_1,\ldots,(\gamma_0)_r$, then this formula for $\omega(t,x)$ agrees with the wedge product
\[ (-1)^{nr} d_x ( \gamma_0)_1 \wedge \cdots \wedge d_x (\gamma_0)_r, \]
where $d_x$ is the exterior derivative in the $x$ variables only. From this observation, it follows that $\Phi$ has the particularly simple form
\[ \Phi_x(t_1,\ldots,t_k) = (-1)^{nr} \det \left[ \! \!  \begin{array}{ccc} \left[ \frac{\partial \gamma_0}{\partial x} (t_1,x) \right]^T & \! \! \cdots &  \! \! \left[ \frac{\partial \gamma_0}{\partial x} (t_k,x) \right]^T \end{array} \! \! \right] \]
where $\partial \gamma_0 / \partial x$ is the $r \times rk$ Jacobian matrix of $\gamma_0$.

{\bf Example 1 (Hausdorff measure).}
Let $\mathcal C_\ell$ be the real associative algebra\footnote{The algebra $\mathcal C_\ell$ is an example of a Clifford algebra.} generated by elements $1,e_1,\ldots,e_\ell$ which are subject to the relations $1 e_j = e_j 1 = e_j$ for all $j$, $e_{i} e_j = - e_{j} e_i$ when $j \neq i$, and $e_i^2 = 1$. The dimension of the algebra as a vector space over the reals is $2^\ell$, and
\[ \left( \sum_{j=1}^\ell a_j e_j \right)^2 = \left( \sum_{j=1}^\ell a_j^2 \right) 1 \]
for any real numbers $a_1,\ldots,a_\ell$. Consequently if $M_1,\ldots,M_\ell$ are the $2^\ell \times 2^{\ell}$ matrices which express the action of left multiplication in $\mathcal C_\ell$ by $e_1,\ldots,e_\ell$, respectively, in the standard basis, then
\[ \det \left[ \sum_{j=1}^\ell a_j M_j \right]^2 = \left( \sum_{j=1}^\ell a_j^2 \right)^{2^\ell}. \]
If $n \leq \ell$ and one defines a mapping
\[ \Gamma(t) := \sum_{j=1}^\ell \Gamma_j(t) e_j  \]
for polynomial functions $\Gamma_1,\ldots,\Gamma_\ell$, then the Radon-like operator
\begin{equation} T f(y,x) := \int_{\R^n} f ( t, y + \Gamma(t) x) \chi_{\widetilde \Omega}(t,y,x) dt, \label{clifford} \end{equation}
where $x,y \in \mathcal C_\ell$, has the corresponding functional $\Phi$ 
\[ \Phi_{y,x} (t_1,t_2) = | \Gamma(t_2) - \Gamma(t_1)|^{2^\ell} \]
where $|\cdot|$ denotes the Euclidean distance of points in $\mathcal C$ when expressed in coordinates with respect to the standard basis.
This $\Phi$ vanishes to order $2^\ell$ on the diagonal, so when $\sigma = n 2^{-\ell}$ and $s = 2^\ell / n$ the optimal measure of Theorem \ref{bestmeasthm} is comparable to the $n$-dimensional Hausdorff measure on the image of $\Gamma$, assuming that $\Gamma(t)$ is locally injective.
If $\widetilde \Omega := \Omega \times \mathcal C_\ell \times \mathcal C_\ell$ for a set $\Omega$ on which $( \det (\partial \Gamma / \partial t)^T (\partial \Gamma / \partial t))^{1/2} \gtrsim \delta^{n/2^\ell}$, then \eqref{mainhyp} must apply and consequently
\begin{equation} ||T \chi_F ||_{L^{\frac{2^\ell + 2n}{n}}} \lesssim \delta^{-\frac{n}{2^\ell+2n}} |F|^{\frac{2n}{2^\ell + 2n}} \label{cliffordop} \end{equation}
for all Borel sets $F \subset \R^n \times \mathcal C_\ell$. In particular, note that the image of $\Gamma$ need not have any curvature whatsoever; in this case, the multiplicative structure of the Clifford algebra grants the operator \eqref{clifford} a sort of rotational curvature regardless of the higher-order geometric properties of $\Gamma$. If $\Gamma$ simply parametrizes a linear subspace, then \eqref{clifford} becomes a restricted $n$-plane transform; the estimate \eqref{cliffordop} can be taken to be global in $t$ and consequently scaling and Knapp examples give that the integrability exponents appearing in \eqref{cliffordop} are sharp.

{\bf Example 2 (Determinantal measure).} Generalizing the first example, suppose that $\Gamma : \R^n \rightarrow \R^{n' \times n'}$ is a polynomial map. The Radon-like operator
\begin{equation}
T f (y,x) := \int_{\R^n} f(t, y + \Gamma(t) x) \chi_{\widetilde \Omega}(t,y,x) dt \label{matrix} 
\end{equation}
where $y,x \in \R^{n'}$ and $\Gamma(t) x$ denotes matrix-vector multiplication, has functional
\[ \Phi_{y,x}(t_1,t_2) = \det (\Gamma(t_2) - \Gamma(t_1) ). \]
The order of vanishing $q$ of $\Phi$ on the diagonal must be at least $n'$. The associated measure $\mathcal H^{n/n'}_\Phi$ from Theorem \ref{bestmeasthm} is comparable to the $n/n'$-dimensional determinantal Hausdorff measure from Section \ref{detex} restricted to the image of $\Gamma$ (assuming, for example, that $\Gamma$ is locally injective). The measure must be absolutely continuous with respect to Lebesgue measure, so whenever it is nonzero, one can take $\widetilde \Omega := \Omega \times \R^{n'} \times \R^{n'}$ where $\Omega$ is any set on which the Radon-Nykodym derivative is at least comparable to $\delta^{n/n'}$. Then \eqref{mainhyp} will hold and the conclusion \eqref{mainconc} of Theorem \ref{radon} will hold with $k=2$ and $s = n'/n$. An extreme case occurs when $n = n'^2$ and $\Gamma$ is simply a linear isomorphism. Fixing $d T$ to Lebesgue measure on $\R^{n' \times n'}$, then the isodiametric determinantal inequality on $\R^{n' \times n'}$ proved in Proposition \ref{calcprop} implies the global, scaling-invariant inequality
\[ \left[ \int_{\R^{n'} \times \R^{n'}} \left|\int_{\R^{n' \times n'}} \chi_F(T, y + Tx) dT\right|^\frac{2n'+1}{n'} dx dy \right]^{\frac{n'}{2n'+1}} \lesssim |F|^{\frac{2n'}{2n'+1}} \]
for all Borel sets $F \subset \R^{n' \times n'} \times \R^{n'}$.

A modification of this example also applies to the case of convolution with measures on quadratic submanifolds of dimension $n$ in $\R^{2n}$. Specifically, fixing
\[ Q(a,b) := \left( \sum_{i,j=1}^n Q^1_{ij} a_i b_j, \ldots, \sum_{i,j = 1}^n Q^n_{ij} a_i b_j \right) \]
under the assumption that $Q^{\ell}_{ij} = Q^{\ell}_{ji}$ for each $i,j,\ell = 1,\ldots,n$, then
 the operator
\begin{equation} T f(y,x) = \int f(t, y - Q(x-t,x-t)) dt \label{average} \end{equation}
has a corresponding functional $\Phi$ given by
\[ \Phi_{y,x} (t_1,t_2) = \det ( Q(\cdot,t_2 - t_1)) \]
where $Q(\cdot,a)$ denotes the $n \times n$ matrix whose $(i,j)$-entry equals 
\[ \sum_{\ell=1}^n Q^i_{j \ell} a_\ell. \]
Since $\Phi$ is a polynomial of degree exactly $n$, the density \eqref{hausdens2} is a constant function. In the framework of geometric invariant theory, the infimum \eqref{hausdens2} is comparable to the infimum over the $\SL(n,\R)$-orbit of the polynomial $p(t) := \det Q(\cdot,t)$, where elements of $\SL(n,\R)$ act by linear coordinate changes (see, for example, the work of Richardson and Slodowy \cite{rs1990} extending the Kempf-Ness minimum vector construction to the context of real algebraic geometry). Thus the infimum is zero if and only if $p$ belongs to the nullcone of the representation. Because the nullcone is exactly the zero set of all $\SL(n,\R)$-invariant polynomials in the coefficients (which is a finitely generated algebra), this reduces the problem of applying Theorem \ref{radon} to \eqref{average} to a finite list of calculations once a set of generating $\SL(n,\R)$-invariant polynomials is known. This approach complements earlier work of the author \cite{gressman2015} which formulates a slightly weaker result in terms of the critical integrability exponent of the polynomial $\det Q(\cdot,t)$.

{\bf Example 3 (Affine measure).}
For the Radon-like operator
\begin{equation}
T f(x',x) := \int_{\R^n} f(t, x' + \Gamma(t) \cdot x) \chi_{\widetilde \Omega}(t,x',x) dt 
\label{radonaff} \end{equation}
where $x' \in \R$, $x \in \R^{k}$, and $\Gamma : \R^n \rightarrow \R^k$ is a polynomial map (and $\cdot$ is the dot product), the corresponding functional $\Phi$ equals
\[ \Phi_{x',x}(t_1,\ldots,t_k) = \det ( \Gamma(t_1) - \Gamma(t_{k+1}),\ldots, \Gamma(t_k) - \Gamma(t_{k+1})) \]
up to a factor of $\pm 1$. The order of vanishing $q$ must be at least $k$ but will generally be much larger. If $\sigma = n/q$ and $\Gamma$ is locally injective, then the sharp measure from Theorem \ref{bestmeasthm} is comparable to Oberlin's affine measure on the image of $\Gamma$; for general submanifolds, this measure will be comparable to affine submanifold measure as recently constructed by the author elsewhere \cite{gressman2017} (although the comparability may fail in special cases, e.g., when $\Gamma$ includes no mixed monomials). Unlike the Clifford algebra example, the nondegeneracy of affine submanifold measure on $\Gamma$ depends on higher-order geometry of $\Gamma$ and not just its first derivatives. Once again, because this measure is necessarily absolutely continuous with respect to Lebesgue measure, if the image of $\Gamma$ has nonzero affine Hausdorff measure, then a suitable $\widetilde \Omega$ can be defined to apply Theorem \ref{radon} to \eqref{radonaff}.

\bibliography{mybib}

\end{document}